\documentclass[11pt,a4paper]{article}

\usepackage{amsmath}
\usepackage{amssymb}
\usepackage{amsfonts}
\usepackage{amsthm}
\usepackage{mathtools}
\usepackage{mathrsfs}

\usepackage{graphicx}
\usepackage{multirow}
\usepackage{booktabs}

\usepackage[title]{appendix}

\usepackage{algorithm}
\usepackage{algorithmicx}
\usepackage{algpseudocode}

\usepackage{xcolor}
\usepackage{textcomp}
\usepackage{manyfoot}
\usepackage{listings}

\usepackage[a4paper,margin=1in]{geometry}

\usepackage[
colorlinks=true,
linkcolor=blue,
citecolor=blue,
urlcolor=blue
]{hyperref}

\theoremstyle{plain}

\newtheorem{theorem}{Theorem}[section]

\newtheorem{proposition}[theorem]{Proposition}
\newtheorem{lemma}[theorem]{Lemma}

\theoremstyle{definition}

\newtheorem{definition}[theorem]{Definition}

\theoremstyle{remark}

\newtheorem{remark}[theorem]{Remark}

\numberwithin{equation}{section}

\newcommand{\T}{\mathbb{T}}
\newcommand{\Z}{\mathbb{Z}}
\newcommand{\N}{\mathbb{N}}
\newcommand{\E}{\mathbb{E}}
\usepackage{enumitem}
\newcommand{\dd}{\,\mathrm{d}}

\newcommand{\Dt}{\Delta t}
\newcommand{\PN}{\mathcal{P}_N}

\begin{document}
	
	
	\title{
		Convergence of a fully discrete approximation for the
		stochastic 2D Euler equations with Kraichnan transport noise
	}
	
	
	\author{
		Abhishek Chaudhary\textsuperscript{1}\thanks{
			Corresponding author. Email: \href{mailto:chaudhary@na.uni-tuebingen.de}
			{chaudhary@na.uni-tuebingen.de}
		}
		\and
		Ujjwal Koley\textsuperscript{2}
		\thanks{
			Email: \href{mailto:ujjwal@tifrb.res.in}
			{ujjwal@tifrb.res.in}
		}
		\and
		Andreas Prohl\textsuperscript{1}
		\thanks{
			Email: \href{mailto:prohl@na.uni-tuebingen.de}
			{prohl@na.uni-tuebingen.de}
		}
	}
	
	
	\date{}
	
	\maketitle
	
	\begin{center}
		\small
		\textsuperscript{1}Mathematisches Institut, Universit\"at T\"ubingen,
		Auf der Morgenstelle 10, D-72076 T\"ubingen, Germany
		
		\vspace{0.3em}
		
		\textsuperscript{2}TIFR Centre for Applicable Mathematics,
		Tata Institute of Fundamental Research,
		Bengaluru, Karnataka 560065, India
	\end{center}
	
	\abstract{We study a fully discrete approximation theory for the two-dimensional
		incompressible Euler equations on $\mathbb{T}^{2}$, in vorticity form, driven by Kraichnan-type transport noise and with $\mathbb{H}^{-1}(\T^2)$-valued initial vorticity. The
		main new ingredient is a finite-dimensional coercivity estimate for the torus
		Kraichnan model, which captures at the discrete level the regularizing mechanism
		induced by the rough transport noise. Together with a new compactness strategy based
		on continuous equation-based interpolants and localized discrete estimates, this
		coercivity estimate gives tightness of the approximations and shows that subsequential limit is a weak martingale solution of the Euler equations.}

	\medskip
	\noindent\textbf{Keywords:}
	Euler equations, Kraichnan noise, transport noise, spectral method,
	implicit Euler scheme, regularization effects, space-time discretization,
	rough data, vortex sheet, incompressible flow.
	
	\medskip
	
	
	
	\maketitle
	
	\section{Introduction}\label{sec1}
	The 2D incompressible Euler equations occupy a central position in
	fluid dynamics and in the mathematical theory of inviscid flows. In vorticity form,
	the 2D incompressible Euler equations read
	\[
	\partial_t \omega + (\mathbf{u}\cdot\nabla)\omega = 0,
	\qquad
	\mathbf{u} = \nabla^\perp(-\Delta)^{-1}\omega,
	\]
	where \(\omega=\operatorname{curl}\mathbf{u}\) is the scalar vorticity. Classical well-posedness in the deterministic setting is well understood for
	bounded vorticity through the work of Yudovich \cite{Yudovich1963}. In this
	regime, the associated velocity field is log-Lipschitz, and the Euler equations
	admits a unique global weak solution in the Yudovich class. For rougher data,
	however, the theory is far more subtle, and different notions of weak solution
	become relevant. DiPerna and Majda studied concentration effects arising
	from regularizations of 2D incompressible flow and developed compactness mechanisms
	for passing to weak limits \cite{DiPernaMajda1987,DiPernaMajda1988}. Delort proved
	existence of vortex-sheet solutions for initial vorticities given by bounded Radon
	measures with a distinguished sign \cite{Delort1991}. Schochet clarified the weak
	vorticity formulation and the role of concentration-cancellation in the 2D Euler
	equations \cite{Schochet1995}. Wiedemann constructed weak solutions with very low
	regularity, illustrating that weak formulations extend beyond the classical
	vorticity-based settings and that the precise formulation of the weak solution
	concept becomes important \cite{Wiedemann2011}. These works show that, for rough
	vorticity, compactness and the identification of weak limits are central issues in
	the analysis of the 2D Euler equations.
	
	Similar difficulties also appear in the deterministic numerical analysis of the Euler equations. For
	sufficiently regular solutions, Fourier spectral approximations of the incompressible
	Euler equations are stable and converge spectrally \cite{BardosTadmor2015}. Lopes
	Filho, Nussenzveig Lopes and Tadmor studied approximate solutions of the 2D
	incompressible Euler equations in the absence of concentration effects
	\cite{LopesFilhoNussenzveigLopesTadmor2001}. Lanthaler and Mishra proved convergence
	of a spectral-viscosity method for the 2D incompressible Euler equations with rough
	initial data, including vorticities in the Delort class
	\cite{LanthalerMishra2019}. Related computations for rough 2D Euler flows,
	especially vortex-sheet type flows, have also motivated measure-valued and
	statistical solution frameworks for incompressible Euler equations~\cite{LanthalerMishra2015,LanthalerMishraParesPulido2021}. In the
	present paper, we revisit this approximation problem in a stochastic setting:
	the approximations are generated by a fully discrete implementable scheme, and
	the subsequential limit is a weak martingale solution.
	
	In recent years there has been growing interest in stochastic perturbations of fluid
	equations, and in particular in transport-type noises that preserve the geometric
	structure of the Euler dynamics. Among these, Kraichnan-type noises are especially
	important. They arise naturally in turbulence modeling and in scaling-limit questions,
	and they have been used to capture effective dissipation and regularization phenomena
	generated by unresolved small scales; see,
	\cite{Kraichnan1968,Kraichnan1994,Galeati2020,FlandoliLuo2020,
		FlandoliGaleatiLuo2024,FlandoliPappalettera2021,FlandoliHuang2023} for more details. In the present work we consider the
	2D Euler equations on \(\mathbb{T}^2\) perturbed by Kraichnan transport noise,
	namely
	\begin{equation}\label{eq:intro_strat}
		\mathrm{d}\omega + (\mathbf{u}\cdot \nabla)\omega\,\mathrm{d}t
		+ \sum_{k\in\mathcal I}(\boldsymbol{\sigma}_k\cdot \nabla)\omega \circ \mathrm{d} {\bf W}_k(t) = 0,
		\qquad
		\mathbf{u}=\nabla^\perp(-\Delta)^{-1}\omega,
	\end{equation}
	with divergence-free Fourier modes \(\boldsymbol{\sigma}_k\) and Brownian motions ${\bf W}_k$ which are defined in Section~\ref{Section 2}. 
	
	Equation~\eqref{eq:intro_strat} can be viewed as an effective large-scale
	vorticity equation in which the resolved Euler velocity \(\mathbf{u}\) is coupled
	with unresolved incompressible transport fluctuations. The vector fields
	\(\boldsymbol{\sigma}_k\) describe the spatial structure of these fluctuations, and
	the Stratonovich formulation preserves the geometric character of transport by a
	random velocity field. Thus the noise is not an additive external force acting on
	the vorticity; rather, it enters through the transport mechanism and models the
	influence of unresolved scales on the resolved inviscid dynamics. This interpretation
	is consistent with stochastic model reduction and with scaling-limit descriptions of
	turbulent transport; see, for instance,
	\cite{Kraichnan1968,Kraichnan1994,FlandoliPappalettera2021,FlandoliHuang2023,
		FlandoliGaleatiLuo2024}.
	
	\subsection{Our aims and contributions in this work}
	The rough Kraichnan regime is by now known to produce regularization effects in
	fluid and transport equations. On \(\mathbb R^2\), Coghi and Maurelli \cite{CoghiMaurelli2026} studied
	the stochastic 2D Euler equations~ \eqref{eq:intro_strat} with unbounded vorticity and rough Kraichnan
	transport noise. They proved  existence of a weak martingale solution to \eqref{eq:intro_strat} for
	\(\dot{\mathbb H}^{-1}(\mathbb R^2)\)-initial vorticity and showed that the
	constructed solutions enjoy an additional
	\(\mathbb L^2([0,T];\mathbb H^{-\alpha}(\mathbb R^2))\)-regularity with $\alpha\in (0,1/2)$. For
	\(\mathbb L^1(\mathbb R^2)\cap \mathbb L^p(\mathbb R^2)\)-initial vorticities,
	they also proved pathwise uniqueness in the range \(p>3/2\), under suitable
	restrictions on the Kraichnan regularity index $\alpha$. This uniqueness result was later
	extended by Jiao and Luo \cite{JiaoLuo2024} to all \(p>1\), thereby removing the restriction
	\(p>3/2\). For stochastic 2D Euler
	equations with transport noise, existence and uniqueness in the bounded-vorticity
	regime were obtained by Brze\'zniak, Flandoli and Maurelli
	\cite{BrzezniakFlandoliMaurelli2016}, while a stochastic analogue of Delort-type
	existence for positive \(\mathbb{H}^{-1}\)-vorticity was proved by Brze\'zniak and Maurelli
	\cite{BrzezniakMaurelli2019}. Closely related anomalous regularization mechanisms
	have also been studied for Kraichnan passive scalars and scaling limits of stochastic
	transport equations
	\cite{Galeati2020,FlandoliGaleatiLuo2024,GaleatiGrottoMaurelli2024}. However, these
	results are primarily analytical and continuous in nature. A fully discrete
	approximation theory for the Euler equations~\eqref{eq:intro_strat}, capable of
	producing weak martingale solutions from an implementable numerical scheme, has not
	been available.
	
	The purpose of the present paper is to develop such a theory at the level of a fully
	discrete approximation. However, on \(\mathbb{R}^{2}\), the absence of an explicit and directly
	implementable expression for the vector fields \(\boldsymbol{\sigma}_{k}\) makes numerical approximation highly challenging and, at present, seemingly out of reach. For this reason and motivated by~\cite[Sec. IV]{FlandoliHuang2023}, we work instead on the 2D torus, where explicit implementable formulas for the vector fields \(\boldsymbol{\sigma}_{k}\) are available. This choice allows us to keep the rough Kraichnan structure while obtaining a genuinely finite-dimensional and implementable approximation scheme.
	
	More precisely, we introduce and analyze the spectral scheme~\eqref{eq:scheme_keep}, obtained by combining Fourier truncation with parameter $N$, a finite noise cutoff with parameter $K$, which combined a mixed implicit--explicit time discretization with step size $\Delta t$. In this scheme, the transport term is evaluated at the mixed time levels \(\bigl(\mathbf{u}_{N}^{\,n+1},\omega_{N}^{\,n}\bigr)\), while the It\^{o} correction is represented by the implicit Laplacian term \(\tfrac{c_{K}}{2}\,\Delta\omega_{N}^{\,n+1}\). This structure is not accidental: when the scheme is tested in the \(\mathbb{H}^{-1}(\mathbb{T}^2)\)-energy, the mixed-time discretization yields the exact cancellation described in Remark~\ref{rem:mixed_time_conv}, whereas the implicit Laplacian provides the coercive contribution needed for the stability estimate. In this way, the scheme is consistent both with the transport structure of the Euler dynamics and with the regularizing mechanism induced by the Kraichnan noise; see Remark~\ref{rem:necessity_implicit_euler} for more details.

	The first main contribution of the paper is a finite-dimensional uniform coercivity estimate \eqref{eq:finite_dimensional_coercivity} for the torus model, under the admissible scale-separation condition \[ K\ge \sqrt{2}\,\gamma N, \qquad \gamma>1+\sqrt{2}. \] This condition ensures that the truncated Kraichnan noise contains sufficiently many modes above the resolved spatial frequencies; these high modes are precisely responsible for the discrete negative-Sobolev dissipation, as explained in Remark~\ref{rem:regularization_high_modes}.
	
	In the Euclidean setting, coercivity is recovered through the covariance structure of
	the Kraichnan field and its singular behavior near the diagonal; see, in particular,
	the energy analysis in \cite{CoghiMaurelli2026}. On the torus, however, the problem
	is of a different nature. Here the noise is represented by explicit trigonometric
	Fourier modes, and the relevant quantity is the discrete quadratic form
	\[
	\mathfrak D_{N,K}(\omega)
	=
	\sum_{k\in\mathcal I^K}
	\bigl\|(-\Delta)^{-1/2}\mathcal P_N[(\boldsymbol{\sigma}_k\cdot \nabla)\omega]\bigr\|_{\mathbb{L}^2(\mathbb{T}^2)}^2,
	\]
	which is introduced in Definition~\ref{def:DLK}. The derivation of a finite-dimensional
	coercivity estimate~\eqref{eq:finite_dimensional_coercivity} on \(\mathbb{T}^2\) is, to the best of our
	knowledge, new. The proof is specific to the torus geometry and combines three novel
	ingredients: the uniform Riemann-sum estimate of Lemma~\ref{Riemann-sum}, the
	positivity of the continuum defect established in
	Lemma~\ref{lem:positivity_j_alpha_4}, and the discrete lattice analysis of
	Lemma~\ref{lem:finite_discrete_symbol_gap}. Together, these yield the lower bound
	\eqref{eq:finite_symbol_gap}, and hence the coercivity estimate
	\eqref{eq:finite_dimensional_coercivity}. This estimate is the key input in the
	discrete \(\mathbb{H}^{-1}(\mathbb{T}^2)\)-energy estimates~\eqref{thm:energy_keep_full}
	and~\eqref{thm:strong_energy_keep}. It is also the point at which the torus geometry
	and the finite-dimensional noise approximation enter in an essential way.
	
	The second main contribution is a numerical approximation of weak martingale solutions to the Euler equations~\eqref{eq:intro_strat}. More precisely, we prove that, along subsequences, the fully
	discrete scheme \eqref{eq:scheme_keep} converges to a weak martingale solution of
	the limiting It\^o Euler equations~\eqref{eq:limit_Ito_equation_conv} in the sense of
	Definition~\ref{def:weak_martingale_solution_torus_conv}; see~Theorem~\ref{thm:convergence_fully_discrete_martingale}. This is a subsequential
	convergence theorem, which is the natural form of compactness-based convergence in
	the absence of a full uniqueness theory for weak martingale solutions at this level
	of regularity. This convergence result is new not only for the Euler equations \eqref{eq:intro_strat} but also for the proof strategy. The
	abstract framework of Ondrej\'at, Prohl and Walkington
	\cite{OndrejatProhlWalkington2023} is formulated on discontinuous Skorokhod-type
	path spaces for piecewise-constant interpolants and relies on global uniform bounds
	of the type assumed in their general convergence theorem (see, \cite[Thm.~3.2]{OndrejatProhlWalkington2023}). In the present setting,
	however, this framework is not directly applicable; see Remark~\ref{remark6.12} for more relevant details. What is available here is a
	localized control obtained through the discrete stopping times introduced in
	\eqref{eq:stopping_time_conv}; see in particular
	Lemma~\ref{lem:stop_high_prob_conv},
	Lemma~\ref{lem:localized_drift_bound_conv}, and
	Lemma~\ref{lem:localized_holder_conv}. For this reason, the compactness argument
	must be organized in a different way.

	The convergence proof therefore proceeds through a continuous equation-based
	interpolant,
	\[
	\omega_j^\sharp(t)
	=
	\omega_j^0+\int_0^t F_j(s)\,\mathrm{d}s+\mathcal M_j(t)\qquad\forall\,t\in[0,T],
	\]
	where \(F_j\) and \(\mathcal M_j\) are defined in
	\eqref{eq:drift_j_def_conv} and \eqref{eq:martingale_j_def_conv}. By construction,
	\(\omega_j^\sharp\) coincides with the discrete approximation at the grid points,
	but is continuous in time and already has the temporal regularity expected of the
	limit process. The compactness analysis is carried out on the canonical path space
	\(\mathfrak X\) introduced in \eqref{eq:full_path_space_conv}, whose first
	coordinate is
	\[
	\mathcal X_\omega
	=
	C([0,T];\mathbb{H}^{-5}(\mathbb{T}^2)).
	\]
	The localized H\"older bound of Lemma~\ref{lem:localized_holder_conv}, proved from
	the stopping-time localization \eqref{eq:stopping_time_conv}, yields tightness of
	\(\omega_j^\sharp\) in \(C([0,T];\mathbb{H}^{-5}(\mathbb{T}^2))\). This is then combined with
	Proposition~\ref{prop:tightness_main_conv}, which gives tightness of the full family
	\[
	\bigl(
	\omega_j^\sharp,\underline\omega_j,\overline\omega_j,({\bf W}_k)_{k\in\mathcal I}
	\bigr)
	\]
	on \(\mathfrak X\). The piecewise-constant interpolants \(\underline\omega_j\) and
	\(\overline\omega_j\) still play an important role, but only at the level of the
	discrete weak formulation. They are needed to express the nonlinear and stochastic
	terms, but they are shown to be asymptotically equivalent to the continuous
	interpolant in the topology relevant for the limit passage. More precisely,
	Lemmas~\ref{lem:sharp_under_close_conv} and~\ref{lem:bar_under_close_conv} show that
	\[
	\omega_j^\sharp-\underline\omega_j\to0,
	\qquad
	\overline\omega_j-\underline\omega_j\to0
	\quad\text{in probability in }\mathbb{L}^2([0,T];\mathbb{H}^{-4}(\mathbb{T}^2)).
	\]
	After the Skorokhod--Jakubowski representation theorem,
	Proposition~\ref{prop:skorokhod_jakubowski_conv}, all three approximating objects
	converge to the same limit process \(\widetilde\omega\). In particular,
	\(\widetilde\omega\) is a continuous \(\mathbb{H}^{-5}\)-valued process, and by
	Proposition~\ref{prop:skorokhod_jakubowski_conv} it is adapted to the usual augmentation filtration
	generated by \(\big(\widetilde{\omega}, (\widetilde{\bf W}_k)_{k\in\mathcal I}\big)\). The limit identification is then carried out directly in the discrete weak
	formulation \eqref{eq:discrete_weak_form_conv}, which yields the limiting weak
	formulation \eqref{eq:weak_formimit_conv} associated with
	\eqref{eq:limit_Ito_equation_conv}.
	
	In addition, the corresponding discrete velocities converge strongly along the same
	subsequences to the velocity associated with the limiting martingale solution; see~\eqref{strongconvergene}. This strong convergence is important for the
	identification of the nonlinear transport term. Although the vorticities are compact
	only in weak negative Sobolev topologies at the level dictated by the energy estimates,
	the Biot--Savart law gives additional compactness for the velocities, which allows one
	to pass to the limit in the nonlinearity of Euler equations in the discrete weak formulation.
	
	We emphasize that the two novelties of the paper are closely related. The torus
	coercivity estimate~\eqref{eq:finite_dimensional_coercivity} provides the discrete counterpart of the anomalous
	\(\mathbb{H}^{-\alpha}\)-regularization induced by Kraichnan noise, while the convergence results show that this regularization survives at the level of fully discrete
	approximations and is sufficiently strong to produce weak martingale solutions in
	the limit. Taken together, these results provide a numerical existence theory for the
	Euler equations~\eqref{eq:intro_strat} on \(\mathbb{T}^2\), based on an implementable
	spectral discretization, on a novel coercivity argument in the torus setting,
	and on a different compactness method tailored to the structure of the fully discrete
	scheme~\eqref{eq:scheme_keep}. Thus the scheme is not only a fully discrete approximation procedure; it also gives
	a constructive way to obtain rough weak martingale solutions for
	2D Euler equations~\eqref{eq:intro_strat} on $\T^2$ with \(\mathbb{H}^{-1}(\T^2)\)-valued initial vorticity.
	
	The paper is organized as follows. We first introduce the Kraichnan noise and
	the fully discrete spectral scheme \eqref{eq:scheme_keep}. We then establish the
	coercivity mechanism, combining the uniform Riemann-sum approximation of
	Lemma~\ref{Riemann-sum}, the positivity of the continuum defect in
	Lemma~\ref{lem:positivity_j_alpha_4}, and the discrete lattice estimate of
	Lemma~\ref{lem:finite_discrete_symbol_gap} in order to derive the finite-dimensional
	coercivity bound \eqref{eq:finite_dimensional_coercivity}. After that, we derive the
	discrete energy estimates and stability properties of the scheme. Finally, we prove
	subsequential convergence of the fully discrete approximations to a weak martingale
	solution by establishing tightness on \(\mathfrak X\), applying the
	Skorokhod--Jakubowski representation theorem, and passing to the limit in the
	discrete weak formulation \eqref{eq:discrete_weak_form_conv}, thereby obtaining
	the limiting martingale identity \eqref{eq:weak_formimit_conv}.
	\section{Mathematical setup}\label{Section 2}
	
	Throughout the paper, we work on the standard two-dimensional torus
	\[
	\T^2 := (\mathbb R/2\pi\mathbb Z)^2 .
	\]
	We fix \(0<\alpha<1\) and define the spectral density
	\begin{equation}\label{eq:q_def}
		q(\boldsymbol{\xi}) := \langle \boldsymbol{\xi}\rangle^{-(2+2\alpha)}=(1+|\boldsymbol{\xi}|^2)^{-1-\alpha},
		\qquad \boldsymbol{\xi}\in\mathbb R^2.
	\end{equation}
	For each \(\mathbf{m}\in\mathbb Z^2\setminus\{{\bf 0}\}\), we define the unit vector
	\begin{align}\label{today07}
		\mathbf{e}_{\mathbf m}:=\frac{\mathbf m^\perp}{|\mathbf m|},
	\end{align}
	and the two real divergence-free noise modes
	\begin{equation}\label{eq:noise_ modes}
		\boldsymbol{\sigma}_{\mathbf m,c}(\mathbf x)
		:=
		\sqrt{2q(\mathbf m)}\,\mathbf e_{\mathbf m}\cos(\mathbf m\cdot \mathbf x),
		\qquad
		\boldsymbol{\sigma}_{\mathbf m,s}(\mathbf x)
		:=
		\sqrt{2q(\mathbf m)}\,\mathbf e_{\mathbf m}\sin(\mathbf m\cdot \mathbf x),
		\qquad \mathbf x\in\T^2.
	\end{equation}
	We collect these two families into the index set
	\[
	\mathcal I := (\mathbb Z^2\setminus\{{\bf 0}\})\times\{c,s\},
	\]
	and write
	\[
	\boldsymbol{\sigma}_{(\mathbf m,\ell)} :=
	\begin{cases}
		\boldsymbol{\sigma}_{\mathbf m,c}, & \ell=c,\\[1mm]
		\boldsymbol{\sigma}_{\mathbf m,s}, & \ell=s.
	\end{cases}
	\]
	Accordingly, the driving Brownian motions are denoted by
	\[
	\{ {\bf W}_k\}_{k\in\mathcal I}
	=
	\{W_{\mathbf m,c},W_{\mathbf m,s}\}_{\mathbf m\in\mathbb Z^2\setminus\{{\bf 0}\}},
	\]
	where all components are independent standard Brownian motions. With this notation, the stochastic transport term in \eqref{eq:intro_strat} may
	be written either in the compact form
	\[
	\sum_{k\in\mathcal I} (\boldsymbol{\sigma}_k\cdot \nabla)\omega \circ \dd{\bf W}_k(t),
	\]
	or, equivalently, in the expanded form
	\[
	\sum_{\mathbf m\in\mathbb Z^2\setminus\{{\bf 0}\}}
	\Big[
	(\boldsymbol{\sigma}_{\mathbf m,c}\cdot \nabla)\omega \circ \dd W_{\mathbf m,c}(t)
	+
	(\boldsymbol{\sigma}_{\mathbf m,s}\cdot \nabla)\omega \circ \dd W_{\mathbf m,s}(t)
	\Big].
	\]
	The same convention will be used throughout the paper.
	\subsection{Functional setting}
	\label{subsec:functional_setting}
	For \(\mathbf m\in\mathbb Z^2\), we write
	\[
	e_{\mathbf m}(\mathbf x):=e^{i\mathbf m\cdot \mathbf x},
	\qquad
	\widehat f_{\mathbf m}
	:=
	\frac{1}{(2\pi)^2}\int_{\T^2}
	f(\mathbf x)e^{-i\mathbf m\cdot\mathbf x}\,d\mathbf x .
	\]
	For \(s\in\mathbb R\), we define the Sobolev space
	\[
	\mathbb H^s(\T^2)
	:=
	\left\{
	f\in\mathcal D'(\T^2):\,
	\|f\|_{\mathbb H^s}^2
	:=
	\sum_{\mathbf m\in\mathbb Z^2}
	|\mathbf m|^{2s}|\widehat f_{\mathbf m}|^2
	<\infty
	\right\}.
	\]
	In particular,
	\[
	\mathbb L^2(\T^2):=\mathbb H^0(\T^2).
	\]
	For \(s>0\), the space \(\mathbb H^{-s}(\T^2)\) is identified with the dual of
	\(\mathbb H^s(\T^2)\) with respect to the \(\mathbb L^2\)-pairing. If
	\(f\in\mathbb H^{-s}(\T^2)\) and \(g\in\mathbb H^s(\T^2)\), we write
	\[
	\langle f,g\rangle
	:=
	\sum_{\mathbf m\in\mathbb Z^2}
	\widehat f_{\mathbf m}\,\overline{\widehat g_{\mathbf m}},
	\]
	whenever this expression is meaningful, and otherwise by duality.
	
	\noindent
	
	\noindent
	If \(\mathbb{X}\) is a Banach space, we write \(\mathbb{L}^p([0,T];\mathbb{X})\) and \(C([0,T];\mathbb{X})\) for the
	usual Bochner spaces. When \(\mathbb{X}\) is reflexive space, the notation
	\[
	\mathbb{L}^2([0,T];\mathbb{X})_{\mathrm{weak}}
	\]
	means \(\mathbb{L}^2([0,T];\mathbb{X})\) endowed with its weak topology. 
	
	\noindent
	For a countable index set \(\mathcal I\), the Brownian path space is understood
	as
	\[
	C([0,T];\mathbb R^2)^{\mathcal I}
	\]
	endowed with the product topology.
	\section{Fully discrete spectral scheme on \(\T^2\)}
	
	Fix a final time \(T>0\) and a time step \(\Delta t\in(0,1]\). We define the
	uniform time grid
	\[
	t^n:=n\Delta t,
	\qquad n=0,1,\dots,N_T,
	\qquad
	N_T:=\lfloor T/\Delta t\rfloor.
	\]
	We also fix a Fourier cutoff \(N\in\mathbb N\) and a noise cutoff \(K\in\mathbb N\).
	
	\subsection{Spectral space, projector, and discrete filtration}
	
	For \(\mathbf m\in\mathbb Z^2\), let
	\[
	\mathrm e_{\mathbf m}(\mathbf x):=e^{i\mathbf m\cdot  \mathbf x},
	\qquad \mathbf x\in\T^2
	\]
	denote the standard complex Fourier basis. We define the finite-dimensional space
	of mean-zero trigonometric polynomials by
	\[
	\mathbb V_N
	:=
	\left\{
	v\in \mathbb{L}^2(\T^2):
	v(\mathbf x)=
	\sum_{\substack{\mathbf m\in\mathbb Z^2\\0<|\mathbf m|_\infty\le N}}
	\widehat v(\mathbf m)\,\mathrm e_{\mathbf m}(\mathbf x)
	\right\}.
	\]
	Equivalently, \(\mathbb V_N\subset \mathbb{L}^2(\T^2)\) consists of those mean-zero functions
	whose Fourier coefficients vanish outside the set
	\(\{\mathbf m\in\mathbb Z^2:\ |\mathbf m|_\infty\le N\}\).
	
	\noindent
	We denote by \(\mathcal P_N:\mathbb{L}^2(\T^2)\to\mathbb V_N\) the corresponding Fourier
	projector, defined by
	\[
	\widehat{(\mathcal P_N v)}(\mathbf m)
	=
	\mathbf 1_{\{0<|\mathbf m|_\infty\le N\}}\,\widehat v(\mathbf m),
	\qquad \mathbf m\in\mathbb Z^2.
	\]
	Thus \(\mathcal P_N\) removes the zero mode and truncates all modes outside the
	spectral box \(\{|\mathbf m|_\infty\le N\}\).
	
	\noindent
	For the noise, we impose the finite cutoff
	\[
	\mathcal M_K:=\{\mathbf m\in\mathbb Z^2\setminus\{{\bf 0}\}:\ |\mathbf m|_\infty\le K\},
	\qquad
	\mathcal I^K:=\mathcal M_K\times\{c,s\}.
	\]
	The corresponding discrete filtration is defined by
	\[
	\mathcal F_n
	:=
	\sigma\{{\bf W}^k(s):\ 0\le s\le t^n,\ k\in\mathcal I^K\},
	\qquad n=0,1,\dots,N_T.
	\]
	We also set
	\begin{equation}\label{eq:cK_def}
		c_K:=\sum_{\mathbf m\in\mathcal M_K} q(\mathbf m).
	\end{equation}
	Since \(q(\mathbf m)\sim |\mathbf m|^{-(2+2\alpha)}\) and \(\alpha>0\) in
	dimension \(2\), one has \(c_K<\infty\) for every \(K\), and in fact
	\[
	c_\infty:=\sup_{K\ge1}c_K<\infty.
	\]
	
	\noindent
	If \(\omega_0\in \mathbb{H}^{-1}(\T^2)\), the mean-zero condition is understood in the
	distributional sense:
	\[
	\langle \omega_0,1\rangle=0.
	\]
	Equivalently,
	\[
	\widehat{\omega_0}({0})=0.
	\]
	We then define the initial spectral approximation by
	\[
	\omega_N^0:=\mathcal P_N\omega_0\in\mathbb V_N.
	\]
	
	\subsection{Fully discrete scheme}
	
	Fix $\Delta t\in (0,1)$ and $N, K\in\mathbb{N}$. For each \(n=0,1,\dots,N_T-1\), define the Brownian increments
	\[
	\Delta_{n+1} {\bf W}_k:={\bf W}_k(t_{n+1})-{\bf W}_k(t_n),
	\qquad k\in\mathcal I^K.
	\]
	We seek an \((\mathcal F_n)\)-adapted sequence
	\[
	\omega_N^n\in \mathbb{L}^2(\Omega,\mathcal F_n;\mathbb V_N),
	\qquad n=0,1,\dots,N_T,
	\]
	such that, for every \(n=0,1,\dots,N_T-1\),
	\begin{equation}\label{eq:scheme_keep}
		\boxed{
			\begin{aligned}
				\omega_N^{n+1}-\omega_N^n
				&\;+\;\Delta t\,\mathcal P_N\bigl[(\mathbf u_N^{n+1}\cdot \nabla)\omega_N^n\bigr]
				\\
				&=
				\Delta t\,\frac{c_K}{2}\Delta\omega_N^{n+1}
				-
				\sum_{k\in\mathcal I^K}
				\mathcal P_N\bigl[(\boldsymbol{\sigma}_k\cdot \nabla)\omega_N^n\bigr]\Delta_{n+1} {\bf W}_k,
				\\
				\mathbf u_N^{n+1}&:=\nabla^\perp(-\Delta)^{-1}\omega_N^{n+1}.
		\end{aligned}}
	\end{equation}
	Since \(\omega_N^{n+1}\in\mathbb V_N\), the zero Fourier mode vanishes
	automatically, so the mean-zero condition is built into the scheme.
	
	\noindent
	Expanding the index \(k=(\mathbf m,\ell)\in\mathcal I^K\), the stochastic term in
	\eqref{eq:scheme_keep} may equivalently be written as
	\[
	\sum_{\mathbf m\in\mathcal M_K}
	\Bigl[
	\mathcal P_N\bigl[(\boldsymbol{\sigma}_{\mathbf m,c}\cdot \nabla)\omega_N^n\bigr]\Delta_{n+1} {W}_{\mathbf m,c}
	+
	\mathcal P_N\bigl[(\boldsymbol{\sigma}_{\mathbf m,s}\cdot \nabla)\omega_N^n\bigr]\Delta_{n+1} { W}_{\mathbf m,s}
	\Bigr].
	\]
	Hence the compact notation \(\sum_{k\in\mathcal I^K}\) is exactly equivalent to
	the explicit cosine--sine decomposition.
	
	\begin{remark}[Choice of the mixed-time transport term]
		\label{rem:mixed_time_conv}
		The transport term uses iterates \((\mathbf u_N^{n+1},\omega_N^n)\), hence is evaluated at the subsequent time levels. This choice is dictated by the
		\(\mathbb{H}^{-1}(\T^2)\)-energy estimate. Indeed, if one tests \eqref{eq:scheme_keep}
		against
		\[
		\psi^{n+1}:=(-\Delta)^{-1}\omega_N^{n+1},
		\]
		then
		\[
		\big\langle
		\mathcal P_N[(\mathbf u_N^{n+1}\cdot \nabla)\omega_N^n],\,\psi^{n+1}
		\big\rangle_{\mathbb{L}^2(\T^2)}
		=0
		\]
		by Lemma~\ref{lem:conv_cancel_keep}. The implicit Laplacian term therefore
		provides the coercive contribution in the discrete energy inequality; Section~\ref{sec4} below.
	\end{remark}
	
	The fully discrete approximation introduced above may be summarized in the following implementable Algorithm~\ref{alg:full_nodelta}.
	
	\begin{algorithm}[ht]
		\caption{Fully discrete spectral approximation of the Euler equations~\eqref{eq:intro_strat} on \(\T^2\)}
		\label{alg:full_nodelta}
		\begin{algorithmic}[1]
			\Require Final time \(T\), time step \(\Delta t\), Fourier cutoff \(N\), noise cutoff \(K\), exponent \(\alpha\in(0,1)\), initial datum \(\omega_0\in \mathbb{H}^{-1}(\T^2)\) with zero mean.
			\State Construct the spectral space \(\mathbb V_N\) and the Fourier projector \(\mathcal P_N\).
			\State Define \(\mathcal M_K=\{\mathbf m\in\mathbb Z^2\setminus\{{\bf 0}\}:\ |\mathbf m|_\infty\le K\}\) and \(\mathcal I^K=\mathcal M_K\times\{c,s\}\).
			\State Construct the truncated noise modes \(\boldsymbol{\sigma}_{\mathbf m,c}\), \(\boldsymbol{\sigma}_{\mathbf m,s}\) from \eqref{eq:noise_ modes}.
			\State Compute \(c_K=\sum_{\mathbf m\in\mathcal M_K}q(\mathbf m)\).
			\State Set \(\omega_N^0=\mathcal P_N\omega_0\in \mathbb V_N\), which is \(\mathcal F_0\)-measurable.
			\For{\(n=0,1,\dots,N_T-1\)}
			\State Assume \(\omega_N^n\in \mathbb{L}^2(\Omega,\mathcal F_n;\mathbb V_N)\) is known.
			\State Generate the Gaussian increments
			\[
			\Delta_{n+1}{\bf W}_k
			\sim
			\mathcal N\bigl(0, \Delta t\,I_2\bigr),
			\qquad k\in\mathcal I^K,
			\]
			independently of \(\mathcal F_n\).
			\State Form the \(\mathcal F_{n+1}\)-measurable stochastic term
			\[
			\Xi_n
			:=
			\sum_{k\in\mathcal I^K}
			\mathcal P_N\bigl[(\boldsymbol{\sigma}_k\cdot \nabla)\omega_N^n\bigr]\Delta_{n+1} {\bf W}_k.
			\]
			\State Determine \(\omega_N^{n+1}\in \mathbb{L}^2(\Omega,\mathcal F_{n+1};\mathbb V_N)\) from
			\[
			\omega_N^{n+1}
			+\Delta t\,\mathcal P_N\bigl[(\mathbf u_N^{n+1}\cdot \nabla)\omega_N^n\bigr]
			-\Delta t\,\frac{c_K}{2}\Delta\omega_N^{n+1}
			=
			\omega_N^n-\Xi_n,
			\,\,
			\mathbf u_N^{n+1}=\nabla^\perp(-\Delta)^{-1}\omega_N^{n+1}.
			\]
			\EndFor
		\end{algorithmic}
	\end{algorithm}
	\section{Coercivity estimate}\label{sec4}
	In this section, we establish the key coercivity estimate~(see~\eqref{eq:finite_dimensional_coercivity}) induced by the structure of the Kraichnan noise. On the torus, the noise coefficients admit an explicit Fourier representation, which allows us to work directly with their concrete form. This is in contrast to the \(\mathbb{R}^2\) setting in \cite{CoghiMaurelli2026}, where such an explicit representation is not available and the analysis is therefore carried out through the associated covariance matrix and its properties. Our approach is thus tailored to the torus geometry, while being guided by the structural discussion in \cite[Sec. IV]{FlandoliHuang2023}. In this sense, the argument presented below may be viewed as the torus counterpart of the covariance-based analysis in the Euclidean setting, with the advantage that the explicit form of the noise can be exploited directly.
	
	\subsection{Uniform Riemann-sum estimate and positive continuum defect}
	
	In the next subsection, we use the following uniform Riemann-sum estimate.
	
	\begin{lemma}\label{Riemann-sum}
		Let $\gamma>0$. Let \(\{D_{\mathbf e}\}_{\mathbf e\in\mathbb S^1}\) be a family of smooth bounded sets contained in
		\(B_\gamma=\{\mathbf z\in \mathbb{R}^2: |\mathbf z|\le \gamma\}\). Assume that there exists a constant \(C_D>0\) such that, for every
		\(0<\eta<1\),
		\begin{equation}\label{eq:uniform_boundaryayer}
			\left|
			\left\{
			\mathbf z\in\mathbb R^2:\operatorname{dist}(\mathbf z,\partial D_{\mathbf e})\le \eta
			\right\}
			\right|
			\le
			C_D\eta
			\qquad\text{uniformly in }\mathbf e\in\mathbb S^1 .
		\end{equation}
		Let \(G_{\mathbf e}\in C^1(D_{\mathbf e})\) satisfy
		\begin{align}\label{mean_inequality}
			\sup_{\mathbf e\in\mathbb S^1}\|G_{\mathbf e}\|_{C^1(D_{\mathbf e})}\le M .
		\end{align}
		Then
		\begin{equation}\label{eq:uniform_Riemann_sum_general}
			\sup_{\mathbf e\in\mathbb S^1}
			\bigg|
			r^{-2}
			\sum_{\mathbf m/r\in D_{\mathbf e}}G_{\mathbf e}(\mathbf m/r)
			-
			\int_{D_{\mathbf e}}G_{\mathbf e}(\mathbf z)\,{\rm d}\mathbf z
			\bigg|
			\longrightarrow 0
			\qquad\text{as }r\to\infty .
		\end{equation}
	\end{lemma}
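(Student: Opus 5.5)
The plan is the standard cell-decomposition argument for Riemann sums, with constants tracked so that they depend only on $\gamma$, $C_D$ and $M$ and not on $\mathbf e$. Fix $r\ge 1$ and tile $\mathbb R^2$ by the closed squares $Q_{\mathbf m}:=\mathbf m/r+[-\tfrac{1}{2r},\tfrac{1}{2r}]^2$, $\mathbf m\in\mathbb Z^2$. Each square has area $r^{-2}$ and diameter $\sqrt2/r$. For each $\mathbf e\in\mathbb S^1$ we sort the lattice points into two classes:
\[
\mathcal A_{\mathbf e}:=\{\mathbf m:\ Q_{\mathbf m}\subset D_{\mathbf e}\},
\qquad
\mathcal B_{\mathbf e}:=\{\mathbf m:\ Q_{\mathbf m}\cap D_{\mathbf e}\neq\emptyset,\ Q_{\mathbf m}\not\subset D_{\mathbf e}\}.
\]
If $\mathbf m/r\in D_{\mathbf e}$, then $Q_{\mathbf m}$ meets $D_{\mathbf e}$, so $\mathbf m\in\mathcal A_{\mathbf e}\cup\mathcal B_{\mathbf e}$. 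Moreover $D_{\mathbf e}\subset\bigcup_{\mathcal A_{\mathbf e}\cup\mathcal B_{\mathbf e}}Q_{\mathbf m}$, up to measure zero.

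We then split the difference in \eqref{eq:uniform_Riemann_sum_general} into an interior part and a boundary part:
\[
\sum_{\mathbf m\in\mathcal A_{\mathbf e}}\int_{Q_{\mathbf m}}\bigl(G_{\mathbf e}(\mathbf m/r)-G_{\mathbf e}(\mathbf z)\bigr)\,{\rm d}\mathbf z
\;+\;\Bigl(r^{-2}\!\!\sum_{\mathbf m\in\mathcal B_{\mathbf e},\,\mathbf m/r\in D_{\mathbf e}}\!\! G_{\mathbf e}(\mathbf m/r)-\sum_{\mathbf m\in\mathcal B_{\mathbf e}}\int_{Q_{\mathbf m}\cap D_{\mathbf e}}G_{\mathbf e}\Bigr).
\]

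\textbf{Interior part.} For $\mathbf m\in\mathcal A_{\mathbf e}$ the square $Q_{\mathbf m}$ is a convex subset of $D_{\mathbf e}$. The mean value theorem therefore applies along segments inside the square, and \eqref{mean_inequality} gives $|G_{\mathbf e}(\mathbf m/r)-G_{\mathbf e}(\mathbf z)|\le M\sqrt2/r$ on $Q_{\mathbf m}$. Because the squares in $\mathcal A_{\mathbf e}$ are disjoint subsets of $B_\gamma$, there are at most $\pi\gamma^2r^2$ of them. The interior part is thus bounded by $\sqrt2\,\pi\gamma^2M/r$, uniformly in $\mathbf e$.

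\textbf{Boundary part.} For $\mathbf m\in\mathcal B_{\mathbf e}$ the square $Q_{\mathbf m}$ contains points both inside and outside $D_{\mathbf e}$. Being connected, it must meet $\partial D_{\mathbf e}$, so every point of $Q_{\mathbf m}$ lies within distance $\sqrt2/r$ of $\partial D_{\mathbf e}$. Take $r>\sqrt2$ so that $\eta:=\sqrt2/r<1$. Since the squares have disjoint interiors, \eqref{eq:uniform_boundaryayer} yields
\[
r^{-2}\#\mathcal B_{\mathbf e}\le C_D\sqrt2/r .
\]
Both terms in the boundary part are bounded by $M\,r^{-2}\#\mathcal B_{\mathbf e}$, using $|G_{\mathbf e}|\le M$ and $|Q_{\mathbf m}\cap D_{\mathbf e}|\le r^{-2}$. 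The boundary part is therefore at most $2\sqrt2\,MC_D/r$.

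Adding the two parts, the supremum in \eqref{eq:uniform_Riemann_sum_general} is $O\bigl(M(\gamma^2+C_D)/r\bigr)\to0$. The main obstacle is the boundary layer: the number of boundary cells must be controlled uniformly in $\mathbf e$, and this is exactly where hypothesis \eqref{eq:uniform_boundaryayer} is used. A secondary point is that the mean value estimate is only applied on squares lying entirely inside $D_{\mathbf e}$, so no convexity of $D_{\mathbf e}$ itself is needed.
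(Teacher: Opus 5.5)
Your proof is correct and follows essentially the same route as the paper. You tile the plane by squares of side $1/r$, apply the mean-value inequality on cells contained in $D_{\mathbf e}$, and use connectedness of each cell together with the uniform boundary-layer hypothesis to show that only $O(r)$ cells meet $\partial D_{\mathbf e}$; this gives an $O(1/r)$ error uniformly in $\mathbf e$. The only difference is cosmetic: you use closed cells centred at $\mathbf m/r$ and put all boundary contributions into the single class $\mathcal B_{\mathbf e}$, whereas the paper uses half-open cells with corner $\mathbf m/r$ and bounds $D_{\mathbf e}\setminus V^r_{\mathbf e}$ and the boundary sum separately.
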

	
	\begin{proof}
		For \(\mathbf m\in\mathbb Z^2\), define
		\[
		Q_{\mathbf m}^r:=\frac{\mathbf m}{r}+\left[0,\frac1r\right)^2 .
		\]
		Let
		\[
		\mathcal J_{\mathbf e}^r:=\{\mathbf m\in\mathbb Z^2:\ \mathbf m/r\in D_{\mathbf e}\},
		\qquad
		\mathcal I_{\mathbf e}^r:=\{\mathbf m\in\mathcal J_{\mathbf e}^r:\ Q_{\mathbf m}^r\subset D_{\mathbf e}\},
		\qquad
		\mathcal B_{\mathbf e}^r:=\mathcal J_{\mathbf e}^r\setminus\mathcal I_{\mathbf e}^r,
		\]
		and consider the set
		\[
		V_{\mathbf e}^r:=\bigcup_{\mathbf m\in\mathcal I_{\mathbf e}^r}Q_{\mathbf m}^r.
		\]
		Then
		\[
		\begin{aligned}
			r^{-2}\sum_{\mathbf m/r\in D_{\mathbf e}}G_{\mathbf e}(\mathbf m/r)-\int_{D_{\mathbf e}}G_{\mathbf e}(\mathbf z)\,{\rm d}\mathbf z
			=
			\Big(
			r^{-2}\sum_{\mathbf m\in\mathcal I_{\mathbf e}^r}G_{\mathbf e}(\mathbf m/r)-\int_{V_{\mathbf e}^r}G_{\mathbf e}(\mathbf z)\,{\rm d}\mathbf z
			\Big)\\
			+
			r^{-2}\sum_{\mathbf m\in\mathcal B_{\mathbf e}^r}G_{\mathbf e}(\mathbf m/r)
			-
			\int_{D_{\mathbf e}\setminus V_{\mathbf e}^r}G_{\mathbf e}(\mathbf z)\,{\rm d}\mathbf z.
		\end{aligned}
		\]
		Since \(\mathcal J_{\mathbf e}^r=\mathcal B_{\mathbf e}^r\cup \mathcal I_{\mathbf e}^r\), therefore
		\begin{align}\label{eq:riemann_split_corrected}
			\bigg|
			r^{-2}\sum_{\mathbf m/r\in D_{\mathbf e}}G_{\mathbf e}(\mathbf m/r)-\int_{D_{\mathbf e}}G_{\mathbf e}(\mathbf z)\,{\rm d}\mathbf z
			\bigg|
			\le
			\left|
			r^{-2}\sum_{\mathbf m\in\mathcal I_{\mathbf e}^r}G_{\mathbf e}(\mathbf m/r)-\int_{V_{\mathbf e}^r}G_{\mathbf e}(\mathbf z)\,{\rm d}\mathbf z
			\right|\notag\\
			+
			r^{-2}\sum_{\mathbf m\in\mathcal B_{\mathbf e}^r}|G_{\mathbf e}(\mathbf m/r)|
			+
			\int_{D_{\mathbf e}\setminus V_{\mathbf e}^r}|G_{\mathbf e}(\mathbf z)|\,{\rm d}\mathbf z.
		\end{align}
		
		\smallskip
		\noindent\textbf{Step 1: interior quadrature error.}
		If \(\mathbf m\in\mathcal I_{\mathbf e}^r\), then \(Q_{\mathbf m}^r\subset D_{\mathbf e}\), so \(G_{\mathbf e}\) is defined on
		\(Q_{\mathbf m}^r\). By the mean-value inequality and \eqref{mean_inequality}, we obtain
		\[
		|G_{\mathbf e}(\mathbf m/r)-G_{\mathbf e}(\mathbf z)|\le M|\mathbf z-\mathbf m/r|
		\le \frac{CM}{r}
		\qquad\text{for }\mathbf z\in Q_{\mathbf m}^r.
		\]
		Therefore
		\[
		\left|
		r^{-2}G_{\mathbf e}(\mathbf m/r)-\int_{Q_{\mathbf m}^r}G_{\mathbf e}(\mathbf z)\,{\rm d}\mathbf z
		\right|=\left|
		\int_{Q_{\mathbf m}^r}\bigl(G_{\mathbf e}(\mathbf m/r)-G_{\mathbf e}(\mathbf z)\bigr)\,{\rm d}\mathbf z
		\right|
		\le
		\frac{CM}{r^3}.
		\]
		Since \(D_{\mathbf e}\subset B_\gamma\), the number of such interior squares is bounded by \(Cr^2\),
		uniformly in \(\mathbf e\). Hence
		\begin{equation}\label{eq:interior_error_corrected}
			\bigg|
			r^{-2}\sum_{\mathbf m\in\mathcal I_{\mathbf e}^r}G_{\mathbf e}(\mathbf m/r)-\int_{V_{\mathbf e}^r}G_{\mathbf e}(\mathbf z)\,{\rm d}\mathbf z
			\bigg|
			\le
			\frac{C}{r}.
		\end{equation}
		
		\smallskip
		\noindent\textbf{Step 2: boundary layer control.}
		We claim that
		\[
		D_{\mathbf e}\setminus V_{\mathbf e}^r
		\subset
		\left\{
		\mathbf z\in\mathbb R^2:\operatorname{dist}(\mathbf z,\partial D_{\mathbf e})\le \frac{\sqrt2}{r}
		\right\}.
		\]
		Indeed, let \(\mathbf z\in D_{\mathbf e}\setminus V_{\mathbf e}^r\). Since the half-open squares \(Q_{\mathbf m}^r\) form a
		partition of \(\mathbb R^2\), there exists a unique \(\mathbf m\in\mathbb Z^2\) such that
		\(\mathbf z\in Q_{\mathbf m}^r\). If \(Q_{\mathbf m}^r\subset D_{\mathbf e}\) and \(\mathbf m/r\in D_{\mathbf e}\), then \(\mathbf m\in\mathcal I_{\mathbf e}^r\)
		and \(\mathbf z\in V_{\mathbf e}^r\), which is a contradiction. Thus either \(Q_{\mathbf m}^r\not\subset D_{\mathbf e}\), or
		\(\mathbf m/r\notin D_{\mathbf e}\). In both cases, one finds a point \(\mathbf y\in Q_{\mathbf m}^r\cap\partial D_{\mathbf e}\),
		and hence
		\[
		\operatorname{dist}(\mathbf z,\partial D_{\mathbf e})\le |\mathbf z-\mathbf y|\le \frac{\sqrt2}{r}.
		\]
		Similarly, if \(\mathbf m\in\mathcal B_{\mathbf e}^r\), then \(\mathbf m/r\in D_{\mathbf e}\) but \(Q_{\mathbf m}^r\not\subset D_{\mathbf e}\),
		so again there exists \(\mathbf y\in Q_{\mathbf m}^r\cap\partial D_{\mathbf e}\), and therefore
		\[
		Q_{\mathbf m}^r\subset
		\left\{
		\mathbf z\in\mathbb R^2:\operatorname{dist}(\mathbf z,\partial D_{\mathbf e})\le \frac{\sqrt2}{r}
		\right\}.
		\]
		By assumption~\eqref{eq:uniform_boundaryayer}, we therefore have for the Lebesgue measure of this boundary layer
		\[
		\sup_{\mathbf e\in \mathbb{S}^1}\left|
		\left\{
		\mathbf z\in\mathbb R^2:\operatorname{dist}(\mathbf z,\partial D_{\mathbf e})\le \frac{\sqrt2}{r}
		\right\}
		\right|
		\le \frac{C}{r},
		\]
		Consequently, for the Lebesgue measure of $D_{\mathbf e}\setminus V_{\mathbf e}^r$,
		\begin{equation}\label{eq:boundaryayer_measure_corrected}
			\sup_{\mathbf e\in \mathbb{S}^1}|D_{\mathbf e}\setminus V_{\mathbf e}^r|\le \frac{C}{r},
			\qquad
			\sup_{\mathbf e\in \mathbb{S}^1}\bigg|\bigcup_{\mathbf m\in\mathcal B_{\mathbf e}^r}Q_{\mathbf m}^r\bigg|\le \frac{C}{r}.
		\end{equation}
		Since the squares are pairwise disjoint and each has area \(r^{-2}\), it follows that
		\[
		\#\mathcal B_{\mathbf e}^r\cdot r^{-2}
		=
		\bigg|\bigcup_{\mathbf m\in\mathcal B_{\mathbf e}^r}Q_{\mathbf m}^r\bigg|
		\le \frac{C}{r},
		\]
		hence
		\[
		\#\mathcal B_{\mathbf e}^r\le Cr
		\qquad\text{uniformly in }\mathbf e.
		\]
		By using \(|G_{\mathbf e}|\le M\), we obtain
		\begin{equation}\label{eq:boundary_sum_corrected}
			r^{-2}\sum_{\mathbf m\in\mathcal B_{\mathbf e}^r}|G_{\mathbf e}(\mathbf m/r)|
			\le
			Mr^{-2}\#\mathcal B_{\mathbf e}^r
			\le
			\frac{C}{r},
		\end{equation}
		and
		\begin{equation}\label{eq:boundary_integral_corrected}
			\int_{D_{\mathbf e}\setminus V_{\mathbf e}^r}|G_{\mathbf e}(\mathbf z)|\,{\rm d}\mathbf z
			\le
			M|D_{\mathbf e}\setminus V_{\mathbf e}^r|
			\le
			\frac{C}{r}.
		\end{equation}
		Finally, by combining \eqref{eq:riemann_split_corrected},
		\eqref{eq:interior_error_corrected},
		\eqref{eq:boundary_sum_corrected}, and
		\eqref{eq:boundary_integral_corrected}, we conclude that
		\[
		\sup_{\mathbf e\in\mathbb S^1}
		\bigg|
		r^{-2}\sum_{\mathbf m/r\in D_{\mathbf e}}G_{\mathbf e}(\mathbf m/r)-\int_{D_{\mathbf e}}G_{\mathbf e}(\mathbf z)\,{\rm d}\mathbf z
		\bigg|
		\le
		\frac{C}{r},
		\]
		which proves the claim~\eqref{eq:uniform_Riemann_sum_general}.
	\end{proof}
	
	\begin{lemma}[Positivity of the continuum defect]
		\label{lem:positivity_j_alpha_4}
		Fix \(0<\alpha<1\) and $\gamma>1$. For \(\mathbf e\in\mathbb S^1\), define
		\begin{align}\label{t8}
			\tau_{\mathbf e}(\mathbf z)
			:=
			\frac{|\mathbf z^\perp\cdot  \mathbf e|^2}{|\mathbf z|^2}
			\left(
			\frac{1}{|\mathbf z+\mathbf e|^2}
			+
			\frac{1}{|\mathbf z-\mathbf e|^2}
			\right),
			\qquad \mathbf z\neq {\bf 0},\pm \mathbf e,
		\end{align}
		and
		\begin{align}\label{t9}
			F_{\mathbf e}(\mathbf z):=|\mathbf z|^{-2-2\alpha}\bigl(1-\tau_{\mathbf e}(\mathbf z)\bigr).
		\end{align}
		Then the principal value integral
		\begin{align}\label{t99}
			J_\gamma(\mathbf e):=\operatorname{p.v.}\int_{|\mathbf z|\le\gamma}F_{\mathbf e}(\mathbf z)\,{\rm d}\mathbf z
			=\lim_{\varepsilon\downarrow0}
			\int_{\varepsilon<|\mathbf z|\le\gamma}F_{\mathbf e}(\mathbf z)\,{\rm d}\mathbf z
		\end{align}
		is well defined, independent of \(\mathbf e\in\mathbb S^1\), and satisfies
		\[
		J_\gamma(\mathbf e)=j_{\alpha, \gamma},
		\]
		where
		\begin{align}\label{today20}
			j_{\alpha, \gamma}
			:=
			2\pi\int_1^\gamma \rho^{-1-2\alpha}(1-\rho^{-2})\,{\rm d}\rho .
		\end{align}
		In particular,
		\begin{align}\label{today09}
			j_{\alpha, \gamma}>0.
		\end{align}
	\end{lemma}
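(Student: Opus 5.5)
The plan is to compute the integral in polar coordinates and show that the angular average of \(\tau_{\mathbf e}\) over each circle \(|\mathbf z|=\rho\) is explicit. It equals \(1\) for \(\rho<1\) and \(\rho^{-2}\) for \(\rho>1\). The inner disc then contributes nothing, which makes the principal value trivial, and the outer annulus yields \(j_{\alpha,\gamma}\).

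\textbf{Independence of \(\mathbf e\).} Let \(R\) be a rotation with \(R\mathbf e_0=\mathbf e\), where \(\mathbf e_0=(1,0)\). Rotations commute with \(\perp\) and preserve norms, so \(\tau_{\mathbf e}(R\mathbf z)=\tau_{\mathbf e_0}(\mathbf z)\) and \(F_{\mathbf e}(R\mathbf z)=F_{\mathbf e_0}(\mathbf z)\). The annuli \(\{\varepsilon<|\mathbf z|\le\gamma\}\) are rotation invariant, so every truncated integral in \eqref{t99} is independent of \(\mathbf e\). Hence it suffices to treat \(\mathbf e=\mathbf e_0\).

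\textbf{Integrability away from the origin.} Near \(\pm\mathbf e_0\) we use \(|\mathbf z^\perp\cdot\mathbf e_0|=|z_2|=|(\mathbf z\mp\mathbf e_0)_2|\le|\mathbf z\mp\mathbf e_0|\). This gives
\[
\tau_{\mathbf e_0}(\mathbf z)\le |\mathbf z|^{-2}\bigl(|z_2|^2|\mathbf z+\mathbf e_0|^{-2}+|z_2|^2|\mathbf z-\mathbf e_0|^{-2}\bigr),
\]
and the right-hand side is bounded on \(\{|\mathbf z|\ge\varepsilon\}\); indeed, since \(|z_2|\le|\mathbf z|\), it is at most \(2\) times a bounded function. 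Thus \(F_{\mathbf e_0}\) is bounded on every annulus \(\varepsilon<|\mathbf z|\le\gamma\), and Fubini in polar coordinates is justified there.

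\textbf{Angular average.} Write \(\mathbf z=\rho(\cos\theta,\sin\theta)\). Then
\[
\frac{|\mathbf z^\perp\cdot\mathbf e_0|^2}{|\mathbf z|^2}=\sin^2\theta,
\qquad
|\mathbf z\mp\mathbf e_0|^2=\rho^2+1\mp2\rho\cos\theta .
\]
The substitution \(\theta\mapsto\theta+\pi\) exchanges the two terms of \(\tau_{\mathbf e_0}\), so
\[
\frac1{2\pi}\int_0^{2\pi}\tau_{\mathbf e_0}\,d\theta=\frac{2}{2\pi}\int_0^{2\pi}\frac{\sin^2\theta}{1+\rho^2-2\rho\cos\theta}\,d\theta .
\]
For \(\rho<1\) I would expand the Poisson kernel as
\[
(1+\rho^2-2\rho\cos\theta)^{-1}=(1-\rho^2)^{-1}\sum_{n\in\mathbb Z}\rho^{|n|}e^{in\theta}
\]
and use \(\sin^2\theta=\tfrac12(1-\cos2\theta)\). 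Only the modes \(n=0,\pm2\) contribute, giving the average \(\tfrac{1}{2}(1-\rho^2)^{-1}(1-\rho^2)=\tfrac12\).

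For \(\rho>1\) I would factor out \(\rho^2\) and apply the same expansion in \(1/\rho\), giving \(\tfrac{1}{2\rho^2}\).

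Consequently
\[
\int_0^{2\pi}\bigl(1-\tau_{\mathbf e_0}\bigr)\,d\theta=\begin{cases}0,&\rho<1,\\ 2\pi(1-\rho^{-2}),&\rho>1.\end{cases}
\]
The circle \(\rho=1\) has measure zero and can be ignored.

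\textbf{Conclusion.} For \(0<\varepsilon<1\), Fubini gives
\[
\int_{\varepsilon<|\mathbf z|\le\gamma}F_{\mathbf e_0}\,d\mathbf z=\int_\varepsilon^\gamma\rho^{-1-2\alpha}\int_0^{2\pi}\bigl(1-\tau_{\mathbf e_0}\bigr)\,d\theta\,d\rho=j_{\alpha,\gamma}.
\]
This value does not depend on \(\varepsilon\), so the limit in \eqref{t99} exists and equals \(j_{\alpha,\gamma}\). Note that \(F_{\mathbf e}\) is not absolutely integrable at \(0\), since \(\tau\approx2\sin^2\theta\) there; the exact cancellation on \(\rho<1\) is what makes the principal value well defined. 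Positivity \eqref{today09} follows because the integrand in \eqref{today20} is strictly positive on \((1,\gamma)\) and \(\gamma>1\).

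The main obstacle is the exact angular integral. The Fourier/Poisson-kernel expansion handles it cleanly, with the case split at \(\rho=1\), and that split is precisely the origin of the lower limit \(1\) in \(j_{\alpha,\gamma}\).
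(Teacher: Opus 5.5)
Your proof is correct and follows the paper's argument. You rotate to $\mathbf e=(1,0)$, compute the angular average of $\tau_{\mathbf e}$ by the Poisson kernel expansion (it equals $1$ for $\rho<1$ and $\rho^{-2}$ for $\rho>1$), and note that the truncated integrals do not depend on $\varepsilon<1$. The only cosmetic difference is that you use $\theta\mapsto\theta+\pi$ to reduce to a single Poisson kernel, whereas the paper adds the expansions at $\rho$ and $-\rho$ so that the odd modes cancel.
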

	
	\begin{proof}
		The principal value in \eqref{t99} is taken only at the singular point \({\mathbf z}={\bf 0}\). The function \(\tau_{\mathbf e}\) is bounded near
		\(\pm \mathbf e\). Indeed, near \(\mathbf z=\mathbf e\),
		\[
		\mathbf z^\perp\cdot  \mathbf e=\mathbf z^\perp\cdot (\mathbf e-\mathbf z).
		\]
		Therefore
		\[
		\frac{|\mathbf z^\perp\cdot  \mathbf e|^2}{|\mathbf z|^2|\mathbf z-\mathbf e|^2}
		=
		\frac{|\mathbf z^\perp\cdot (\mathbf e-\mathbf z)|^2}{|\mathbf z|^2|\mathbf z-\mathbf e|^2}
		\le 1.
		\]
		The same argument applies near \(\mathbf z=-\mathbf e\).
		
		\noindent
		\textbf{Step 1.} We first show that \(J_\gamma(\mathbf e)\) is independent of \(\mathbf e\). Let \(R\in SO(2)\) be a
		rotation such that \(R\mathbf e_1=\mathbf e\), where ${\bf e}_1= (1,0)$. Since rotations commute with the perpendicular
		map in two dimensions, we have
		\[
		(R\mathbf y)^\perp=R(\mathbf y^\perp).
		\]
		Therefore, for \(\mathbf z=R\mathbf y\),
		\[
		\mathbf z^\perp\cdot  \mathbf e
		=
		(R\mathbf y)^\perp\cdot  R\mathbf e_1
		=
		R(\mathbf y^\perp)\cdot  R\mathbf e_1
		=
		\mathbf y^\perp\cdot  \mathbf e_1.
		\]
		Moreover,
		\[
		|\mathbf z|=|\mathbf y|,
		\qquad
		|\mathbf z\pm \mathbf e|
		=
		|R\mathbf y\pm R\mathbf e_1|
		=
		|R(\mathbf y\pm \mathbf e_1)|
		=
		|\mathbf y\pm \mathbf e_1|.
		\]
		Hence
		\[
		\tau_{\mathbf e}(\mathbf z)=\tau_{\mathbf e_1}(\mathbf y)=\tau_{\mathbf e_1}(R^{-1}\mathbf z).
		\]
		Since the ball \(B_\gamma=\{|\mathbf z|\le\gamma\}\), the Lebesgue measure \({\rm d}\mathbf z\), and the weight
		\(|\mathbf z|^{-2-2\alpha}\) are rotationally invariant, the change of variables
		\(\mathbf z=R\mathbf y\) gives
		\[
		\begin{aligned}
			J_\gamma(\mathbf e)
			&=
			\operatorname{p.v.}\int_{|\mathbf z|\le\gamma}
			|\mathbf z|^{-2-2\alpha}
			\bigl(1-\tau_{\mathbf e}(\mathbf z)\bigr)\,{\rm d}\mathbf z
			\\
			&=
			\operatorname{p.v.}\int_{|\mathbf y|\le\gamma}
			|\mathbf y|^{-2-2\alpha}
			\bigl(1-\tau_{\mathbf e_1}(\mathbf y)\bigr)\,{\rm d}\mathbf y
			\\
			&=
			J_\gamma(\mathbf e_1).
		\end{aligned}
		\]
		Thus it is enough to compute the defect for \(\mathbf e=\mathbf e_1\). We write
		\[
		\mathbf z=\rho(\cos\varphi,\sin\varphi),
		\qquad
		\rho>0,\quad \varphi\in[0,2\pi).
		\]
		Then
		\[
		\frac{|\mathbf z^\perp\cdot  \mathbf e_1|^2}{|\mathbf z|^2}
		=
		\sin^2\varphi,
		\]
		and
		\[
		|\mathbf z\pm \mathbf e_1|^2
		=
		\rho^2+1\pm2\rho\cos\varphi.
		\]
		Therefore by using~\(\mathbf e_\varphi=(\cos\varphi,\sin\varphi)\) and \eqref{t8}, we write
		\begin{align}\label{hee}
			\tau_{\mathbf e_1}(\rho \mathbf e_\varphi)
			=
			\sin^2\varphi
			\left(
			\frac{1}{1+\rho^2+2\rho\cos\varphi}
			+
			\frac{1}{1+\rho^2-2\rho\cos\varphi}
			\right).
		\end{align}
		
		\noindent
		\textbf{Step 2.} In this step, we now compute the angular average of \(\tau_{\mathbf e_1}\). We claim that
		\begin{align}\label{t1}
			\frac1{2\pi}
			\int_0^{2\pi}
			\tau_{\mathbf e_1}(\rho \mathbf e_\varphi)\,{\rm d}\varphi
			=
			\begin{cases}
				1, & 0<\rho<1,\\[1mm]
				\rho^{-2}, & \rho>1.
			\end{cases}
		\end{align}
		
		\noindent
		\textbf{Step 2({\bf a}).}
		Let \(0<\rho<1\). The Poisson-kernel expansion (see, for instance, \cite[Chapter~2, Section~5.4, Lemma~5.5]{SteinShakarchi2003}) gives
		\[
		\frac{1}{1-2\rho\cos\varphi+\rho^2}
		=
		\frac{1}{1-\rho^2}
		\left(
		1+2\sum_{k\ge1}\rho^k\cos(k\varphi)
		\right),
		\]
		and, replacing \(\rho\) by \(-\rho\),
		\[
		\frac{1}{1+2\rho\cos\varphi+\rho^2}
		=
		\frac{1}{1-\rho^2}
		\left(
		1+2\sum_{k\ge1}(-\rho)^k\cos(k\varphi)
		\right).
		\]
		By adding the two identities, all odd Fourier modes cancel and we obtain
		\begin{align}\label{hee2}
			\frac{1}{1-2\rho\cos\varphi+\rho^2}
			+
			\frac{1}{1+2\rho\cos\varphi+\rho^2}
			=
			\frac{2}{1-\rho^2}
			\left(
			1+2\sum_{j\ge1}\rho^{2j}\cos(2j\varphi)
			\right).
		\end{align}
		By using
		\[
		\sin^2\varphi=\frac{1-\cos(2\varphi)}2,
		\]
		we compute
		\begin{align}\label{hee3}
			&\frac1{2\pi}\int_0^{2\pi}
			\sin^2\varphi
			\left[
			\frac{1}{1-2\rho\cos\varphi+\rho^2}
			+
			\frac{1}{1+2\rho\cos\varphi+\rho^2}
			\right]
			\,{\rm d}\varphi\notag
			\\
			&\quad =
			\frac1{1-\rho^2}
			\frac1{2\pi}
			\int_0^{2\pi}
			(1-\cos(2\varphi))
			\left(
			1+2\sum_{j\ge1}\rho^{2j}\cos(2j\varphi)
			\right)
			\,{\rm d}\varphi.
		\end{align}
		By the property of cos function, we obtain
		\begin{align}\label{hee4}
			\frac1{2\pi}\int_0^{2\pi}
			\left(
			1+2\sum_{j\ge1}\rho^{2j}\cos(2j\varphi)
			\right)
			\,{\rm d}\varphi
			=
			1,
		\end{align}
		By the orthogonality of trigonometric functions, we have also
		\begin{align}\label{hee5}
			\frac1{2\pi}\int_0^{2\pi}
			\cos(2\varphi)
			\left(
			1+2\sum_{j\ge1}\rho^{2j}\cos(2j\varphi)
			\right)
			\,{\rm d}\varphi
			=
			\rho^2.
		\end{align}
		Indeed, only the \(j=1\) term survives in the second integral and
		\[
		\frac1{2\pi}\int_0^{2\pi}\cos^2(2\varphi)\,{\rm d}\varphi=\frac12.
		\]
		Therefore from~\eqref{hee}-\eqref{hee5}, the angular average for \(0<\rho<1\),
		\begin{align}\label{t3}
			\frac1{2\pi}
			\int_0^{2\pi}
			\tau_{\mathbf e_1}(\rho \mathbf e_\varphi)\,{\rm d}\varphi=\frac{1-\rho^2}{1-\rho^2}=1.
		\end{align}
		This proves the formula~\eqref{t1} for \(0<\rho<1\).
		
		\noindent
		\textbf{Step 2(b).} For \(\rho>1\), set \(s=\rho^{-1}\in(0,1)\). Since
		\[
		1+\rho^2\pm2\rho\cos\varphi
		=
		\rho^2(1+s^2\pm2s\cos\varphi),
		\]
		we have
		\[
		\tau_{\mathbf e_1}(\rho \mathbf e_\varphi)
		=
		\rho^{-2}
		\sin^2\varphi
		\left(
		\frac{1}{1+s^2+2s\cos\varphi}
		+
		\frac{1}{1+s^2-2s\cos\varphi}
		\right)
		=
		\rho^{-2}\tau_{\mathbf e_1}(s \mathbf e_\varphi).
		\]
		Then \eqref{t3} gives for \(\rho>1\)
		\[
		\frac1{2\pi}
		\int_0^{2\pi}
		\tau_{\mathbf e_1}(\rho \mathbf e_\varphi)\,{\rm d}\varphi
		=
		\rho^{-2}\frac1{2\pi}
		\int_0^{2\pi}
		\tau_{\mathbf e_1}(s \mathbf e_\varphi)\,{\rm d}\varphi
		=
		\rho^{-2}.
		\]
		This proves the angular-average identity~\eqref{t1} for \(\rho>1\).
		
		\noindent
		\textbf{Step 3.}
		In this step, we can now compute \(J_\gamma(\mathbf e)\). For \(\varepsilon>0\), define
		\[
		J_{4,\varepsilon}(\mathbf e_1)
		:=
		\int_{\varepsilon<|\mathbf z|\le\gamma}
		|\mathbf z|^{-2-2\alpha}
		\bigl(1-\tau_{\mathbf e_1}(\mathbf z)\bigr)\,{\rm d}\mathbf z.
		\]
		By using polar coordinates, we have
		\[
		\begin{aligned}
			J_{4,\varepsilon}(\mathbf e_1)
			&=
			\int_\varepsilon^\gamma
			\rho^{-2-2\alpha}\rho
			\left[
			\int_0^{2\pi}
			\bigl(1-\tau_{\mathbf e_1}(\rho \mathbf e_\varphi)\bigr)
			\,{\rm d}\varphi
			\right]\,{\rm d}\rho
			\\
			&=
			2\pi
			\int_\varepsilon^\gamma
			\rho^{-1-2\alpha}
			\left[
			1-
			\frac1{2\pi}
			\int_0^{2\pi}
			\tau_{\mathbf e_1}(\rho \mathbf e_\varphi)\,{\rm d}\varphi
			\right]
			\,{\rm d}\rho.
		\end{aligned}
		\]
		For \(0<\rho<1\), the angular average is \(1\) by using \eqref{t1}, so the contribution from
		\(\varepsilon<\rho<1\) is exactly zero.
		
		\noindent
		For \(1<\rho<\gamma\), the angular average is
		\(\rho^{-2}\) by using~\eqref{t1}. Hence, for every \(\varepsilon<1\),
		\[
		J_{4,\varepsilon}(\mathbf e_1)
		=
		2\pi
		\int_1^\gamma
		\rho^{-1-2\alpha}
		(1-\rho^{-2})\,{\rm d}\rho.
		\]
		The right-hand side is independent of \(\varepsilon\), so the principal value
		exists and
		\[
		J_\gamma(\mathbf e)
		=
		J_\gamma(\mathbf e_1)
		=
		2\pi
		\int_1^\gamma
		\rho^{-1-2\alpha}
		(1-\rho^{-2})\,{\rm d}\rho.
		\]
		We denote
		\[
		j_{\alpha, \gamma}
		:=
		2\pi
		\int_1^\gamma
		\rho^{-1-2\alpha}
		(1-\rho^{-2})\,{\rm d}\rho.
		\]
		Since
		\[
		\rho^{-1-2\alpha}>0
		\qquad\text{and}\qquad
		1-\rho^{-2}>0
		\quad\text{for }1<\rho<\gamma,
		\]
		we obtain
		\begin{align*}
			j_{\alpha, \gamma}>0.
		\end{align*}
		This proves the desired positivity of the continuum defect.
	\end{proof}
	\begin{remark}[Role of the condition \(\gamma>1\)]
		\label{rem:importance_gamma}
		The condition \(\gamma>1\) is essential for the positivity of the continuum
		defect. Indeed, as follows from the proof of
		Lemma~\ref{lem:positivity_j_alpha_4}, if \(0<\gamma\le 1\), then
		\[
		J_\gamma(\mathbf e)=0
		\qquad \text{for every } \mathbf e\in\mathbb S^1,
		\]
		and hence
		\[
		j_{\alpha,\gamma}=0.
		\]
		Thus no positive dissipative contribution can be obtained in this regime. The
		strict inequality \(\gamma>1\) is therefore necessary in order to detect the
		regularizing effect generated by the high Kraichnan modes; see Remark~\ref{rem:regularization_high_modes} below for more details.
	\end{remark}
	\subsection{Discrete noise quadratic form}
	
	\begin{definition}[Discrete noise quadratic form]\label{def:DLK}
		For mean-zero $\omega$ on $\T^2$ define
		\begin{align}\label{quadratic form}
			\mathfrak D_{N, K}(\omega)
			&:=\sum_{k\in\mathcal I^K}
			\Big\|(-\Delta)^{-1/2}\mathcal{P}_N[(\boldsymbol{\sigma}_k\!\cdot \nabla)\omega]\Big\|_{\mathbb{L}^2(\T^2)}^2\notag \\
			&=
			\sum_{k\in\mathcal I^K}
			\Big\langle\mathcal{P}_N[(\boldsymbol{\sigma}_k\!\cdot \nabla)\omega],\ (-\Delta)^{-1}\mathcal{P}_N[(\boldsymbol{\sigma}_k\!\cdot \nabla)\omega]\Big\rangle_{\mathbb{L}^2(\T^2)}.
		\end{align}
	\end{definition}

	The following lemma establishes a key inequality that plays a crucial role in the derivation of the coercivity estimate~\eqref{eq:finite_dimensional_coercivity} below. Although its proof is roughly motivated by \cite[Corollary 11]{FlandoliHuang2023} and \cite[Lemma 4.3]{CoghiMaurelli2026}, the proof given below is self-contained and, to the best of our knowledge, new in the present torus setting. In this sense, the result may be viewed as a discrete torus analogue of \cite[Lemma 4.3]{CoghiMaurelli2026}. However, the argument in \cite[Lemma 4.3]{CoghiMaurelli2026} relies on structural features of the Euclidean setting that are not directly available on the torus, and therefore a different proof is required in the present framework.
	\begin{lemma}
		\label{lem:finite_discrete_symbol_gap}
		Fix \(0<\alpha<1\) and $\gamma>1+\sqrt{2}$. Let
		\[
		\mathcal K_N:=\{\mathbf n\in\mathbb Z^2:\ |\mathbf n|_\infty\le N\},
		\qquad
		\mathcal M_K:=\{\mathbf m\in\mathbb Z^2\setminus\{{\bf 0}\}:\ |\mathbf m|_\infty\le K\}.
		\]
		Assume 
		\begin{align}\label{today30}
			K\ge \gamma \sqrt2\,N.
		\end{align}
		Then there exist constants
		\(c_0>0\) and \(C_0\ge0\), depending only on \(\alpha\), such that for every \({\bf 0}\neq \mathbf n\in\mathcal K_N\),
		\begin{equation}\label{eq:finite_symbol_gap}
			\sum_{\mathbf m\in\mathcal M_K}
			q({\mathbf m})
			\bigl(1 - T({\mathbf n},{\mathbf m})\bigr)
			\ge
			c_0\langle{\mathbf n}\rangle^{-2\alpha}
			-
			C_0\langle{\mathbf n}\rangle^{-2},
		\end{equation}
		where
		\begin{align}\label{t6}
			T({\mathbf n},{\mathbf m})
			:=
			\left|
			\frac{{\mathbf m}^\perp}{|{\mathbf m}|}\cdot {\mathbf n}
			\right|^2
			\left(
			\frac{\mathbf 1_{\{\mathbf n+\mathbf {\bf m}\neq{\bf 0}\}}}{|{\mathbf n+\mathbf m}|^2}
			+
			\frac{\mathbf 1_{\{\mathbf n-\mathbf {\bf m}\neq{\bf 0}\}}}{|{\mathbf n-\mathbf m}|^2}
			\right).
		\end{align}
	\end{lemma}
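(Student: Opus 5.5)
The plan is to reduce the lattice sum to the continuum defect of Lemma~\ref{lem:positivity_j_alpha_4} by rescaling with $r=|\mathbf n|$. I write $\mathbf n=r\mathbf e$ with $\mathbf e\in\mathbb S^1$ and substitute $\mathbf m=r\mathbf z$. Then, away from $\mathbf m=\pm\mathbf n$,
\[
T(\mathbf n,\mathbf m)=\frac{|\mathbf z^\perp\cdot\mathbf e|^2}{|\mathbf z|^2}\Bigl(\frac{1}{|\mathbf z+\mathbf e|^2}+\frac{1}{|\mathbf z-\mathbf e|^2}\Bigr)=\tau_{\mathbf e}(\mathbf m/r).
\]
At $\mathbf m=\pm\mathbf n$ the indicator kills the singular term, and the prefactor $|\mathbf m^\perp\cdot\mathbf n|^2$ vanishes there anyway. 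Hence the sum equals $\sum_{\mathbf m\in\mathcal M_K}q(\mathbf m)(1-\tau_{\mathbf e}(\mathbf m/r))$.

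\textbf{Step 1: restrict to the ball.} Since $K\ge\gamma\sqrt2 N\ge\gamma|\mathbf n|$, the box $\mathcal M_K$ contains every $\mathbf m$ with $|\mathbf m|\le\gamma r$. I split the sum into $S_{\rm in}$, over $|\mathbf m|\le\gamma r$, and $S_{\rm out}$, over the rest of $\mathcal M_K$.

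On the outer region, $|\mathbf z|\ge\gamma>1+\sqrt2$, so $|\mathbf z\pm\mathbf e|\ge|\mathbf z|-1$. This gives $\tau_{\mathbf e}(\mathbf z)\le 2/(|\mathbf z|-1)^2\le 2/(\gamma-1)^2<1$, and the threshold $\gamma>1+\sqrt2$ is exactly what makes this strict. Hence $S_{\rm out}\ge0$ and can be dropped. This is the point where I expect the hypothesis $\gamma>1+\sqrt2$ to be used, rather than only $\gamma>1$.

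\textbf{Step 2: compare $S_{\rm in}$ with the continuum.} I replace $q(\mathbf m)$ by $|\mathbf m|^{-2-2\alpha}=r^{-2-2\alpha}|\mathbf z|^{-2-2\alpha}$. Then
\[
S_{\rm in}\approx r^{-2\alpha}\cdot r^{-2}\sum_{\mathbf m/r\in B_\gamma} F_{\mathbf e}(\mathbf m/r),
\]
and I want this to be close to $r^{-2\alpha}j_{\alpha,\gamma}$. The function $F_{\mathbf e}$ is singular at $\mathbf z=0$, like $|\mathbf z|^{-2-2\alpha}$, and $\tau_{\mathbf e}$ is not $C^1$ near $\pm\mathbf e$. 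Lemma~\ref{Riemann-sum} therefore applies only to an annulus-type domain $D_{\mathbf e}=\{\delta\le|\mathbf z|\le\gamma\}\setminus(B_\delta(\mathbf e)\cup B_\delta(-\mathbf e))$. On this domain $G_{\mathbf e}=F_{\mathbf e}$ has uniform $C^1$ bounds, and the boundary layer bound \eqref{eq:uniform_boundaryayer} holds uniformly by rotation. The excised pieces are handled as follows.

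\begin{itemize}[label={}]
\item \emph{Near $\pm\mathbf e$.} Here $0\le\tau_{\mathbf e}\le C$ and the weight is bounded. Both the lattice sum and the integral over $B_\delta(\pm\mathbf e)$ are $O(\delta^2)+O(1/r)$.
\item \emph{Near $0$.} On $|\mathbf z|<\delta$ I cannot compare termwise. Instead I use the exact cancellation for $|\mathbf z|<1$ from Step 2 of Lemma~\ref{lem:positivity_j_alpha_4}: the angular average of $1-\tau$ vanishes. This only helps in the integral. For the lattice sum I will bound it directly. Expanding $\tau_{\mathbf e}(\mathbf z)=2|\mathbf z^\perp\cdot\mathbf e|^2/|\mathbf z|^2+O(|\mathbf z|^2)$, the term $1-\tau$ has a degree-zero part $1-2\sin^2\varphi=\cos2\varphi$ whose lattice sums over symmetric sets need not vanish. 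Since the lattice is not rotation invariant, this is the main obstacle.
\end{itemize}

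My fallback for the near-zero region is to bound the small-$|\mathbf m|$ contribution (for $|\mathbf m|\le\delta r$) crudely. Since $\tau_{\mathbf e}\le C$ there, using $\tau\ge0$ and the sign only where favourable, the contribution is at least $-C\sum_{|\mathbf m|\le\delta r}q(\mathbf m)\,|\mathbf m|^2/r^2$. This is obtained by symmetrizing $\mathbf m\to-\mathbf m$ and rotating $\mathbf m\to\mathbf m^\perp$, which are lattice symmetries under which the $\cos2\varphi$ part cancels exactly. Indeed, averaging over $\mathbf m$ and $\mathbf m^\perp$ turns $|\mathbf m^\perp\cdot\mathbf e|^2+|\mathbf m\cdot\mathbf e|^2$ into $|\mathbf m|^2$. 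That leaves $O(|\mathbf m|^2/r^2)$ remainders, which sum to $O(r^{-2}\sum_{|\mathbf m|\le\delta r}|\mathbf m|^{-2\alpha})=O(\delta^{2-2\alpha}r^{-2\alpha})$. The $\mathbf m$ with $|\mathbf m|=O(1)$ produce the $-C_0\langle\mathbf n\rangle^{-2}$ term. The error from $q(\mathbf m)$ versus $|\mathbf m|^{-2-2\alpha}$ is $O(|\mathbf m|^{-4-2\alpha})$, contributing similarly.

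\textbf{Step 3: conclude.} Combining the pieces, $S_{\rm in}\ge r^{-2\alpha}\bigl(j_{\alpha,\gamma}-C\delta^{2-2\alpha}-C\delta^2-o_r(1)\bigr)-C_0r^{-2}$. I choose $\delta$ small, after which there exists $r_0$ such that for $r\ge r_0$ the bound $S_{\rm in}\ge\tfrac12 j_{\alpha,\gamma}r^{-2\alpha}-C_0r^{-2}$ holds. For the finitely many $\mathbf n$ with $|\mathbf n|<r_0$, the left side is bounded below by $-c_\infty\sup\tau$. Since $\langle\mathbf n\rangle^{-2}\ge c\langle\mathbf n\rangle^{-2\alpha}$ in that range, enlarging $C_0$ absorbs these cases, using $\langle\mathbf n\rangle\sim|\mathbf n|$ for $\mathbf n\neq0$. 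The constants depend on $\gamma$ as well, which I would fix in terms of $\alpha$.
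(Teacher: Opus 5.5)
Your proposal is correct and follows essentially the same route as the paper. You discard $|\mathbf m|>\gamma|\mathbf n|$ via $T<2/(\gamma-1)^2<1$, excise $B_\delta(\mathbf 0)$ and $B_\delta(\pm\mathbf e)$, and apply Lemma~\ref{Riemann-sum} on the remaining region to get $|\mathbf n|^{2\alpha}S_{\mathrm{in}}\to j_{\alpha,\gamma}$ uniformly in direction. Near the origin you remove the degree-zero part $\cos 2\varphi$ of $1-T$ by an exact lattice symmetry; your quarter-turn $\mathbf m\mapsto\mathbf m^\perp$ is just the composition of the paper's reflection and coordinate swap. The minor differences are that you use the exact vanishing of the continuum principal value on $B_\delta(\mathbf 0)$ (from the angular-average identity) where the paper proves a $C\delta^{2-2\alpha}$ bound, and that your near-origin step is not really ``crude'': the lower bound $-Cr^{-2}\sum q(\mathbf m)|\mathbf m|^2$ rests entirely on that exact cancellation, not on $\tau_{\mathbf e}\le C$.
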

	
	\begin{proof} We set
		\[
		r:=|\mathbf n|=\sqrt{n_1^2+n_2^2}.
		\]
		Since \(\mathbf n\in\mathcal K_N\), we have
		\[
		r=|\mathbf n|\le \sqrt2\,|\mathbf n|_\infty\le \sqrt2\,N.
		\]
		Hence the condition \(K\ge \gamma\sqrt2\,N\) implies
		\[
		K\ge \gamma r.
		\]
		Therefore
		\[
		\{\mathbf m\in\mathbb Z^2:\ 0<|\mathbf m|\le  \gamma r\}
		\subset \mathcal M_K.
		\]
		We split the finite sum as
		\begin{align}\label{new600}
			\sum_{\mathbf m\in\mathcal M_K}q_{\mathbf m}\bigl(1-T(\mathbf n,\mathbf m)\bigr)
			=
			\sum_{0<|\mathbf m|\le\gamma r}q_{\mathbf m}\bigl(1-T(\mathbf n,\mathbf m)\bigr)
			+
			\sum_{\substack{\mathbf m\in\mathcal M_K\\ |\mathbf m|>\gamma r}}
			q_{\mathbf m}\bigl(1-T(\mathbf n,\mathbf m)\bigr).
		\end{align}
		The last terms is nonnegative. Indeed, if \(|\mathbf m|>\gamma r\), then
		\[
		|\mathbf m\pm \mathbf n|
		\ge |\mathbf m|-|\mathbf n|
		>
		\frac{\gamma-1}{\gamma}|\mathbf m|.
		\]
		Therefore
		\[
		\begin{aligned}
			T(\mathbf n,\mathbf m)
			&\le
			|\mathbf n|^2
			\left(
			\frac{1}{|\mathbf m+\mathbf n|^2}
			+
			\frac{1}{|\mathbf m-\mathbf n|^2}
			\right)
			\\
			&\le
			2r^2\left(\frac{\gamma}{(\gamma-1)|\mathbf m|}\right)^2
			=
			\frac{2\gamma^2}{(\gamma-1)^2}\frac{r^2}{|\mathbf m|^2}
			<
			\frac{2}{(\gamma-1)^2}
			<1.
		\end{aligned}
		\]
		Thus
		\[
		1-T(\mathbf n,\mathbf m)
		>
		1-\frac{2}{(\gamma-1)^2}
		>0,
		\]
		where we used \(\gamma>1+\sqrt{2}\).
		Consequently,
		\begin{align}\label{t13}
			\sum_{\mathbf m\in\mathcal M_K}q_{\mathbf m}\bigl(1-T(\mathbf n,\mathbf m)\bigr)
			\ge
			S_r(\mathbf n),
		\end{align}
		where
		\begin{align}\label{first}
			S_r(\mathbf n):=
			\sum_{0<|\mathbf m|\le\gamma r}q_{\mathbf m}\bigl(1-T(\mathbf n,\mathbf m)\bigr).\end{align}
		To prove \eqref{eq:finite_symbol_gap}, it remains to prove that there exist constants \(c_0>0\) and \(C_0\ge0\), depending
		only on \(\alpha\), such that
		\begin{equation}\label{eq:ball_sum_gap}
			S_r(\mathbf n)\ge c_0 r^{-2\alpha}-C_0 r^{-2}.
		\end{equation}
		In Step~1 of the proof below, we show the asymptotic statement as $r\to \infty$, 
		\begin{equation}\label{eq:discrete_to_continuum}
			r^{2\alpha}S_r(\mathbf n)\longrightarrow j_{\alpha, \gamma}>0
		\end{equation}
		uniformly in the direction \(\mathbf e=\mathbf n/|\mathbf n|\), meaning that
		\[
		\lim_{R\to\infty}
		\sup_{\substack{\mathbf n\in\mathbb Z^2\setminus\{{\bf 0}\}\\ |\mathbf n|\ge R}}
		\left|
		|\mathbf n|^{2\alpha}S_{|\mathbf n|}(\mathbf n)-j_{\alpha, \gamma}
		\right|
		=0,
		\]
		where $j_{\alpha, \gamma}$ is given in \eqref{today20}.
		This uniformity is needed in order to choose a single \(r_0\) such that
		\[
		S_{|\mathbf n|}(\mathbf n)\ge \frac12 j_{\alpha, \gamma}|\mathbf n|^{-2\alpha}
		\]
		for every \(\mathbf n\) with \(|{\mathbf n}|\ge r_0\) in Step~2 to eventually show~\eqref{eq:finite_symbol_gap}.

		\noindent
		\textbf{Step 1.} In this step, we now prove \eqref{eq:discrete_to_continuum}. We fix
		\[
		0<\delta<\frac1{10}.
		\]
		For \(\mathbf e\in\mathbb S^1\), define
		\[
		\Omega_{\delta,\mathbf e}
		:=
		B_\gamma
		\setminus
		\Big(
		B_\delta({\bf 0})\cup B_\delta(\mathbf e)\cup B_\delta(-\mathbf e)
		\Big).
		\]
		We define the following regions to split the discrete sum in~\eqref{first}:
		\[
		A_0^r:=\{\mathbf m\in\mathbb Z^2:\ 0<|\mathbf m|\le \delta r\},
		\]
		\[
		A_+^r
		:=
		\{\mathbf m\in\mathbb Z^2:\ 0<|\mathbf m|\le\gamma r,\ |\mathbf m-\mathbf n|\le\delta r,\ \mathbf m\notin A_0^r\},
		\]
		\[
		A_-^r
		:=
		\{\mathbf m\in\mathbb Z^2:\ 0<|\mathbf m|\le\gamma r,\ |\mathbf m+\mathbf n|\le\delta r,\ 
		\mathbf m\notin A_0^r\cup A_+^r\},
		\]
		and
		\[
		A_{\mathrm{reg}}^r
		:=
		\{\mathbf m\in\mathbb Z^2:\ 0<|\mathbf m|\le\gamma r\}
		\setminus
		\bigl(A_0^r\cup A_+^r\cup A_-^r\bigr).
		\]
		Then the sets
		\[
		A_0^r,\qquad A_+^r,\qquad A_-^r,\qquad A_{\mathrm{reg}}^r
		\]
		are pairwise disjoint and their union is exactly
		\[
		\{\mathbf m\in\mathbb Z^2:\ 0<|\mathbf m|\le\gamma r\}.
		\]
		Therefore
		
		\begin{align}\label{oh7}
			S_r(\mathbf n)
			&=
			\sum_{\mathbf m\in A_{\mathrm{reg}}^r}q_{\mathbf m}(1-T(\mathbf n,\mathbf m))
			+
			\sum_{\mathbf m\in A_0^r}q_{\mathbf m}(1-T(\mathbf n,\mathbf m))\notag
			\\
			&\quad
			+
			\sum_{\mathbf m\in A_+^r}q_{\mathbf m}(1-T(\mathbf n,\mathbf m))
			+
			\sum_{\mathbf m\in A_-^r}q_{\mathbf m}(1-T(\mathbf n,\mathbf m)).
		\end{align}
		We now identify the regular part exactly. By construction,
		\(\mathbf m\in A_{\mathrm{reg}}^r\) if and only if
		\[
		0<|\mathbf m|\le\gamma r,
		\qquad
		|\mathbf m|>\delta r,
		\qquad
		|\mathbf m-\mathbf n|>\delta r,
		\qquad
		|\mathbf m+\mathbf n|>\delta r.
		\]
		Dividing by \(r\), and using \(\mathbf e=\mathbf n/r\), this is equivalent to
		\[
		0<\left|\frac{\mathbf m}{r}\right|\le\gamma,
		\qquad
		\left|\frac{\mathbf m}{r}\right|>\delta,
		\qquad
		\left|\frac{\mathbf m}{r}-\mathbf e\right|>\delta,
		\qquad
		\left|\frac{\mathbf m}{r}+\mathbf e\right|>\delta.
		\]
		By the definition of \(\Omega_{\delta,\mathbf e}\), this is equivalent to
		\[
		0<|\mathbf m|\le\gamma r,
		\qquad
		\frac{\mathbf m}{r}\in\Omega_{\delta,\mathbf e}.
		\]
		Therefore
		\begin{align*}
			A_{\mathrm{reg}}^r
			=
			\left\{
			\mathbf m\in\mathbb Z^2:\ 0<|\mathbf m|\le\gamma r,\ \frac{\mathbf m}{r}\in\Omega_{\delta,\mathbf e}
			\right\}.
		\end{align*}
		Consequently,
		\begin{align}\label{today0111}
			\sum_{\mathbf m\in A_{\mathrm{reg}}^r}q_{\mathbf m}(1-T(\mathbf n,\mathbf m))
			=
			\sum_{\mathbf m/r\in\Omega_{\delta,\mathbf e}}
			q_{\mathbf m}(1-T(\mathbf n,\mathbf m)).\end{align}
		
		\noindent
		To proceed with the proof, we shall use the following auxiliary results,
		whose proofs are given in Step~3 below.
		
		\begin{itemize}
			\item[\textup{1.}]
			The discrete contribution near the origin satisfies
			\begin{equation}\label{today03}
				r^{2\alpha}
				\left|
				\sum_{\mathbf m\in A_0^r}q_{\mathbf m}\bigl(1-T(\mathbf n,\mathbf m)\bigr)
				\right|
				\le
				C\delta^{2-2\alpha}.
			\end{equation}
			
			\item[\textup{2.}]
			The discrete contribution near the points \(\pm\mathbf e\) satisfies
			\begin{equation}\label{today04}
				r^{2\alpha}
				\left|
				\sum_{\mathbf m\in A_+^r\cup A_-^r}q_{\mathbf m}
				\bigl(1-T(\mathbf n,\mathbf m)\bigr)
				\right|
				\le
				C\delta^2+Cr^{-2}.
			\end{equation}
			
			\item[\textup{3.}]
			By recalling \eqref{t9}, the continuum contribution near the origin satisfies
			\begin{equation}\label{today07}
				\left|
				\operatorname{p.v.}\int_{|\mathbf z|\le\delta}
				F_{\mathbf e}(\mathbf z)\,{\rm d}\mathbf z
				\right|
				\le
				C\delta^{2-2\alpha}.
			\end{equation}
			
			\item[\textup{4.}]
			The continuum contributions near the points \(\pm\mathbf e\) satisfy
			\begin{equation}\label{today08}
				\left|
				\int_{|\mathbf z-\mathbf e|\le\delta}
				F_{\mathbf e}(\mathbf z)\,{\rm d}\mathbf z
				\right|
				+
				\left|
				\int_{|\mathbf z+\mathbf e|\le\delta}
				F_{\mathbf e}(\mathbf z)\,{\rm d}\mathbf z
				\right|
				\le
				C\delta^2.
			\end{equation}
			
			\item[\textup{5.}]
			On the regular region \(\Omega_{\delta,\mathbf e}\), the discrete expression is
			asymptotically equivalent to its Riemann-sum form:
			\begin{equation}\label{today02}
				\lim_{r\to \infty}\Bigg[\sup_{\mathbf e\in\mathbb S^1}\bigg|
				r^{2\alpha}
				\sum_{\mathbf m/r\in\Omega_{\delta,\mathbf e}}
				q_{\mathbf m}\bigl(1-T(\mathbf n,\mathbf m)\bigr)
				-
				r^{-2}
				\sum_{\mathbf m/r\in\Omega_{\delta,\mathbf e}}F_{\mathbf e}(\mathbf m/r)
				\bigg|\Bigg]=0.
			\end{equation}
			
			\item[\textup{6.}]
			Finally, the Riemann sums on the regular region \(\Omega_{\delta,\mathbf e}\) converge to the corresponding
			continuum integral:
			\begin{equation}\label{eq:regular_region_plain_riemann}
				\lim_{r\to\infty }\bigg[
				r^{-2}
				\sum_{\mathbf m/r\in\Omega_{\delta,\mathbf e}}
				F_{\mathbf e}(\mathbf m/r)
				\bigg]
				=
				\int_{\Omega_{\delta,\mathbf e}}
				F_{\mathbf e}(\mathbf z)\,{\rm d}\mathbf z .
			\end{equation}
		\end{itemize}
		
		\noindent
		From the regular-region Riemann-sum convergence~\eqref{eq:regular_region_plain_riemann} and \eqref{today02}, we have, uniformly in
		\(\mathbf e=\mathbf n/|\mathbf n|\),
		\begin{align}\label{oh6}
			r^{2\alpha}
			\sum_{\mathbf m/r\in\Omega_{\delta,\mathbf e}}
			q_{\mathbf m}(1-T(\mathbf n,\mathbf m))
			\longrightarrow
			\int_{\Omega_{\delta,\mathbf e}}F_{\mathbf e}(\mathbf z)\,{\rm d}\mathbf z.
		\end{align}
		Therefore, by using \eqref{first},\eqref{oh7}-\eqref{oh6}, we obtain
		\begin{equation}\label{eq:discrete_decomposition_error}
			\limsup_{r\to\infty}
			\left|
			r^{2\alpha}S_r(\mathbf n)
			-
			\int_{\Omega_{\delta,\mathbf e}}F_{\mathbf e}(\mathbf z)\,{\rm d}\mathbf z
			\right|
			\le
			C\delta^{2-2\alpha}+C\delta^2,
		\end{equation}
		uniformly in \(\mathbf e\in\mathbb S^1\).
		
		\noindent
		We now compare the regular-region integral with the full principal-value
		integral. By definition,
		\[
		\operatorname{p.v.}\int_{|\mathbf z|\le\gamma}F_{\mathbf e}(\mathbf z)\,{\rm d}\mathbf z
		=
		\lim_{\varepsilon\downarrow0}
		\int_{\varepsilon<|\mathbf z|\le\gamma}F_{\mathbf e}(\mathbf z)\,{\rm d}\mathbf z.
		\]
		By again using the decomposition
		\[
		B_\gamma
		=
		\Omega_{\delta,\mathbf e}
		\cup B_\delta({\bf 0})
		\cup B_\delta(\mathbf e)
		\cup B_\delta(-\mathbf e),
		\]
		We thank to ~\eqref{today07}--\eqref{today08} to obtain 
		\begin{equation}\label{eq:continuum_decomposition_error}
			\left|
			\operatorname{p.v.}\int_{|\mathbf z|\le\gamma}F_{\mathbf e}(\mathbf z)\,{\rm d}\mathbf z
			-
			\int_{\Omega_{\delta,\mathbf e}}F_{\mathbf e}(\mathbf z)\,{\rm d}\mathbf z
			\right|
			\le
			C\delta^{2-2\alpha}+C\delta^2.
		\end{equation}
		
		\noindent
		By combining \eqref{eq:discrete_decomposition_error} and
		\eqref{eq:continuum_decomposition_error}, we obtain
		\[
		\limsup_{r\to\infty}
		\left|
		r^{2\alpha}S_r(\mathbf n)
		-
		\operatorname{p.v.}\int_{|\mathbf z|\le\gamma}F_{\mathbf e}(\mathbf z)\,{\rm d}\mathbf z
		\right|
		\le
		C\delta^{2-2\alpha}+C\delta^2,
		\]
		uniformly in \(\mathbf e\in\mathbb S^1\).
		
		\noindent
		Since \(0<\alpha<1\), both exponents \(2-2\alpha\) and \(2\) are positive.
		Therefore, letting \(\delta\downarrow0\), we conclude that
		\begin{align}\label{t12}
			r^{2\alpha}S_r(\mathbf n)
			\longrightarrow
			\operatorname{p.v.}\int_{|\mathbf z|\le\gamma}F_{\mathbf e}(\mathbf z)\,{\rm d}\mathbf z
		\end{align}
		uniformly in the direction \(\mathbf e\in\mathbb S^1\).
		
		\noindent
		Finally, from Lemma~\ref{lem:positivity_j_alpha_4},
		\[
		\operatorname{p.v.}\int_{|\mathbf z|\le\gamma}F_{\mathbf n/|\mathbf n|}(\mathbf z)\,{\rm d}\mathbf z
		=
		j_{\alpha, \gamma},
		\]
		where
		\[
		j_{\alpha, \gamma}
		=
		2\pi\int_1^\gamma\rho^{-1-2\alpha}(1-\rho^{-2})\,{\rm d}\rho
		>0.
		\]
		Hence by using~\eqref{t12}, we obtain
		\[
		r^{2\alpha}S_r(\mathbf n)\to j_{\alpha, \gamma}
		\]
		uniformly in the direction \(\mathbf e\in\mathbb S^1\).
		
		\noindent
		\textbf{Step 2.} Since \(j_{\alpha, \gamma}>0\), there exists \(r_0=r_0(\alpha)\) such that, for all
		\(r\ge r_0\),
		\[
		S_r(\mathbf n)\ge \frac12 j_{\alpha, \gamma}r^{-2\alpha}.
		\]
		For \(1\le r<r_0\), there are only finitely many lattice frequencies \(\mathbf n\).
		Therefore, by decreasing the constant \(c_0>0\) if necessary and increasing
		\(C_0\ge0\), we obtain for all \(\mathbf n\neq{\bf 0}\)
		\[
		S_r(\mathbf n)\ge c_0 r^{-2\alpha}-C_0 r^{-2}.
		\]
		This proves \eqref{eq:ball_sum_gap}.
		
		\noindent
		By combining this with \eqref{t13}, we obtain
		\[
		\sum_{\mathbf m\in\mathcal M_K}q_{\mathbf m}(1-T\bigl(\mathbf n,\mathbf m)\bigr)
		\ge
		c_0 r^{-2\alpha}-C_0 r^{-2}.
		\]
		Since \(r=|\mathbf n|\), this is equivalent to
		\[
		\sum_{\mathbf m\in\mathcal M_K}q_{\mathbf m}\bigl(1-T(\mathbf n,\mathbf m)\bigr)
		\ge
		c_0\langle \mathbf n\rangle^{-2\alpha}
		-
		C_0\langle \mathbf n\rangle^{-2}.
		\]
		This is exactly the
		desired estimate \eqref{eq:finite_symbol_gap}.
		
		\noindent
		\textbf{Step 3.} In this step, we give the proofs of claims~\eqref{today03}-\eqref{eq:regular_region_plain_riemann}.

		\noindent
		\textbf{Step 3({\bf a}): \textbf{proof of estimate~{\eqref{today03}}}.}  In this substep, we consider the region \[|\mathbf m|\le\delta r.\] 
		We put
		\begin{align}\label{today14}
			\mu:=\frac{|\mathbf m|}{|\mathbf n|},
			\qquad
			a:=\cos\varphi:=\frac{\mathbf m\cdot  \mathbf n}{|\mathbf m||\mathbf n|}.\end{align}
		Then, \(0\le \mu\le\delta<1/10\) and \(|{\bf a}|\le1\). By using the similar argument as for the derivation of \eqref{hee}, we may write \eqref{t6} as 
		\[
		T(\mathbf n,\mathbf m)
		=
		\sin^2\varphi
		\left(
		\frac{1}{1+2\mu\cos\varphi+\mu^2}
		+
		\frac{1}{1-2\mu\cos\varphi+\mu^2}
		\right).
		\]
		Now we prove the following claim:
		\begin{align}\label{oh1}
			1-T(\mathbf n,\mathbf m)=P_{\mathbf n}(\mathbf m)+R_{\mathbf n}(\mathbf m),
		\end{align}
		where
		\begin{align}\label{today15}
			P_{\mathbf n}(\mathbf m)
			:=
			2\frac{(\mathbf m\cdot  \mathbf n)^2}{|\mathbf m|^2|\mathbf n|^2}-1
		\end{align}
		and
		\begin{align}\label{oh2}
			|R_{\mathbf n}(\mathbf m)|\le C\frac{|\mathbf m|^2}{|\mathbf n|^2}.
		\end{align}
		Since \(\mu<1\), the modes
		\(\mathbf n+\mathbf m\) and \(\mathbf n-\mathbf m\) are nonzero. Hence no indicator vanishes in this region. We have
		\[
		\frac{|\mathbf m^\perp\cdot  \mathbf n|^2}{|\mathbf m|^2}
		=
		|\mathbf n|^2(1-a^2),
		\]
		and
		\[
		|\mathbf n\pm \mathbf m|^2
		=
		|\mathbf n|^2(1+\mu^2\pm2\mu a).
		\]
		Therefore
		\[
		T(\mathbf n,\mathbf m)
		=
		(1-a^2)
		\left(
		\frac{1}{1+\mu^2+2\mu a}
		+
		\frac{1}{1+\mu^2-2\mu a}
		\right).
		\]
		The two fractions can be combined exactly:
		
		\begin{align}\label{today122}
			&\frac{1}{1+\mu^2+2\mu a}
			+
			\frac{1}{1+\mu^2-2\mu a}
			=
			\frac{2(1+\mu^2)}
			{(1+\mu^2)^2-4\mu^2a^2}.
		\end{align}
		Since \(|{\bf a}|\le1\) and \(\mu\le1/10\),
		\begin{align}\label{today133}
			(1+\mu^2)^2-4\mu^2a^2
			\ge
			1-2\mu^2+\mu^4
			\ge
			\frac12.
		\end{align}
		Moreover, we compute for \eqref{today122} as
		\[
		\begin{aligned}
			&\frac{2(1+\mu^2)}
			{(1+\mu^2)^2-4\mu^2a^2}
			-2
			=
			\frac{
				2(1+\mu^2)-2\big((1+\mu^2)^2-4\mu^2a^2\big)
			}
			{(1+\mu^2)^2-4\mu^2a^2}
			=
			\frac{
				2\mu^2(4a^2-1-\mu^2)
			}
			{(1+\mu^2)^2-4\mu^2a^2}.
		\end{aligned}
		\]
		By~\eqref{today133}, the denominator is bounded below by \(1/2\), while
		\[
		|4a^2-1-\mu^2|\le 5.
		\]
		Hence
		\[
		\left|
		\frac{1}{1+\mu^2+2\mu a}
		+
		\frac{1}{1+\mu^2-2\mu a}
		-2
		\right|
		\le
		C\mu^2.
		\]
		Therefore
		\[
		T(\mathbf n,\mathbf m)
		=
		(1-a^2)\bigl(2+O(\mu^2)\bigr)
		=
		2(1-a^2)+O(\mu^2),
		\]
		because \(0\le1-a^2\le1\). Consequently,
		\[
		1-T(\mathbf n,\mathbf m)
		=
		1-2(1-a^2)+O(\mu^2)
		=
		2a^2-1+O(\mu^2).
		\]
		By~\eqref{today14}, we therefore have
		\[
		1-T(\mathbf n,\mathbf m)
		=
		2\frac{(\mathbf m\cdot  \mathbf n)^2}{|\mathbf m|^2|\mathbf n|^2}
		-
		1
		+
		R_{\mathbf n}(\mathbf m),
		\]
		where
		\[
		|R_{\mathbf n}(\mathbf m)|
		\le
		C\mu^2
		=
		C\frac{|\mathbf m|^2}{|\mathbf n|^2}.
		\]
		The constant \(C\) is universal for \(0<\delta<1/10\). This proves the claim~\eqref{oh1}.

		\noindent
		We next prove the exact cancellation of the principal part \(P_{\mathbf n}\) from \eqref{today15}. Let for any $\rho>0$
		\[
		A_\rho:=\{\mathbf m\in\mathbb Z^2:\ 0<|\mathbf m|\le \rho\}.
		\]
		We claim that
		\begin{align}\label{oh3}
			\sum_{\mathbf m\in A_\rho} q_{\mathbf m}\,P_{\mathbf n}(\mathbf m)=0.
		\end{align}
		It is enough to show that
		\[
		\sum_{\mathbf m\in A_\rho} q_{\mathbf m}\,\frac{(\mathbf m\cdot  \mathbf n)^2}{|\mathbf m|^2|\mathbf n|^2}
		=
		\frac12\sum_{\mathbf m\in A_\rho} q_{\mathbf m}.
		\]
		We write \(\mathbf n=(n_1,n_2)\). By expanding \((\mathbf m\cdot  \mathbf n)^2\), we obtain
		\[
		\frac{(\mathbf m\cdot  \mathbf n)^2}{|\mathbf m|^2|\mathbf n|^2}
		=
		\frac{n_1^2}{|\mathbf n|^2}\frac{m_1^2}{|\mathbf m|^2}
		+
		\frac{2n_1n_2}{|\mathbf n|^2}\frac{m_1m_2}{|\mathbf m|^2}
		+
		\frac{n_2^2}{|\mathbf n|^2}\frac{m_2^2}{|\mathbf m|^2}.
		\]
		Therefore 
		
		\begin{align}\label{today155}
			\sum_{\mathbf m\in A_\rho} q_{\mathbf m}\,\frac{(\mathbf m\cdot  \mathbf n)^2}{|\mathbf m|^2|\mathbf n|^2}
			&=
			\frac{n_1^2}{|\mathbf n|^2}
			\sum_{\mathbf m\in A_\rho} q_{\mathbf m}\,\frac{m_1^2}{|\mathbf m|^2}
			+
			\frac{2n_1n_2}{|\mathbf n|^2}
			\sum_{\mathbf m\in A_\rho} q_{\mathbf m}\,\frac{m_1m_2}{|\mathbf m|^2}
			\\
			&\quad
			+
			\frac{n_2^2}{|\mathbf n|^2}
			\sum_{\mathbf m\in A_\rho} q_{\mathbf m}\,\frac{m_2^2}{|\mathbf m|^2}.
		\end{align}

		\noindent Now the mixed term vanishes by symmetry. Indeed, \(A_\rho\) is invariant under the
		reflection \((m_1,m_2)\mapsto(-m_1,m_2)\), while \(q_{\mathbf m}\) is radial and hence
		unchanged by this reflection. Consequently,
		\[
		\sum_{\mathbf m\in A_\rho} q_{\mathbf m}\,\frac{m_1m_2}{|\mathbf m|^2}=0.
		\]
		Next, the two diagonal terms are equal by the symmetry
		\((m_1,m_2)\mapsto(m_2,m_1)\), again because \(A_\rho\) is invariant and \(q_{\mathbf m}\)
		depends only on \(|\mathbf m|\). Hence
		\[
		\sum_{\mathbf m\in A_\rho} q_{\mathbf m}\,\frac{m_1^2}{|\mathbf m|^2}
		=
		\sum_{\mathbf m\in A_\rho} q_{\mathbf m}\,\frac{m_2^2}{|\mathbf m|^2}.
		\]
		By summing over \(\mathbf m\in A_\rho\), we get
		\[
		\sum_{\mathbf m\in A_\rho} q_{\mathbf m}\,\frac{m_1^2}{|\mathbf m|^2}
		+
		\sum_{\mathbf m\in A_\rho} q_{\mathbf m}\,\frac{m_2^2}{|\mathbf m|^2}
		=
		\sum_{\mathbf m\in A_\rho} q_{\mathbf m}.
		\]
		Since the two terms on the left are equal each of them must be half of the total:
		\[
		\sum_{\mathbf m\in A_\rho} q_{\mathbf m}\,\frac{m_1^2}{|\mathbf m|^2}
		=
		\sum_{\mathbf m\in A_\rho} q_{\mathbf m}\,\frac{m_2^2}{|\mathbf m|^2}
		=
		\frac12\sum_{\mathbf m\in A_\rho} q_{\mathbf m}.
		\]
		By substituting these identities into \eqref{today155} yields
		\[
		\sum_{\mathbf m\in A_\rho} q_{\mathbf m}\,\frac{(\mathbf m\cdot  \mathbf n)^2}{|\mathbf m|^2|\mathbf n|^2}
		=
		\frac{n_1^2+n_2^2}{|\mathbf n|^2}\cdot  \frac12\sum_{\mathbf m\in A_\rho} q_{\mathbf m}
		=
		\frac12\sum_{\mathbf m\in A_\rho} q_{\mathbf m}.
		\]
		Therefore
		\[
		\sum_{\mathbf m\in A_\rho} q_{\mathbf m}\,P_{\mathbf n}(\mathbf m)
		=
		2\sum_{\mathbf m\in A_\rho} q_{\mathbf m}\,\frac{(\mathbf m\cdot  \mathbf n)^2}{|\mathbf m|^2|\mathbf n|^2}
		-
		\sum_{\mathbf m\in A_\rho} q_{\mathbf m}
		=
		0,
		\]
		which proves the exact cancellation~\eqref{oh3}.
		
		\noindent
		By taking \(\rho=\delta r\) in \eqref{oh3} and using~\eqref{oh1} we obtain
		\[
		\sum_{0<|\mathbf m|\le\delta r}q_{\mathbf m}(1-T(\mathbf n,\mathbf m))
		=
		\sum_{0<|\mathbf m|\le\delta r}q_{\mathbf m}R_{\mathbf n}(\mathbf m).
		\]
		Hence, by the bound~\eqref{oh2} on \(R_{\mathbf n}\),
		\begin{align}\label{oh4}
			\left|
			\sum_{0<|\mathbf m|\le\delta r}q_{\mathbf m}(1-T(\mathbf n,\mathbf m))
			\right|
			\le
			\frac{C}{r^2}
			\sum_{0<|\mathbf m|\le\delta r}q_{\mathbf m}|\mathbf m|^2.
		\end{align}
		Now we estimate the right hand side of \eqref{oh4}. Since
		\[
		q_{\mathbf m}=\langle \mathbf m\rangle^{-2-2\alpha}=(1+|\mathbf m|^2)^{-1-\alpha},
		\]
		we have, for every \(\mathbf {\bf m}\neq{\bf 0}\),
		\[
		q_{\mathbf m}|\mathbf m|^2
		=
		|\mathbf m|^2(1+|\mathbf m|^2)^{-1-\alpha}
		\le
		|\mathbf m|^2 |\mathbf m|^{-2-2\alpha}
		=
		|\mathbf m|^{-2\alpha}.
		\]
		Consequently,
		\begin{align}\label{today166}
			\left|
			\sum_{0<|\mathbf m|\le\delta r}q_{\mathbf m}(1-T(\mathbf n,\mathbf m))
			\right|
			\le
			\frac{C}{r^2}
			\sum_{0<|\mathbf m|\le\delta r}|\mathbf m|^{-2\alpha}.
		\end{align}
		It remains to estimate the lattice sum. Since \(0<\alpha<1\), we have
		\[
		\sum_{0<|\mathbf m|\le \delta r}|\mathbf m|^{-2\alpha}
		\le
		C (\delta r)^{2-2\alpha}.
		\]
		Indeed, decomposing into unit annuli,
		\[
		\{\mathbf m\in\mathbb Z^2:\ j\le |\mathbf m|<j+1\},
		\qquad j=1,2,\dots,\lfloor \delta r\rfloor,
		\]
		and using that each annulus contains at most \(Cj\) lattice points, we get
		\[
		\sum_{0<|\mathbf m|\le \delta r}|\mathbf m|^{-2\alpha}
		\le
		C\sum_{j=1}^{\lfloor \delta r\rfloor} j\cdot  j^{-2\alpha}
		=
		C\sum_{j=1}^{\lfloor \delta r\rfloor} j^{1-2\alpha}
		\le
		C (\delta r)^{2-2\alpha}.
		\]
		If \(0<\delta r<1\), the sum is empty, so the same bound is trivial after increasing
		\(C\). 
		Therefore  we further estimate \eqref{today166} as follows,
		\[
		\left|
		\sum_{0<|\mathbf m|\le\delta r}q_{\mathbf m}\bigl(1-T(\mathbf n,\mathbf m)\bigr)
		\right|
		\le
		C r^{-2}(\delta r)^{2-2\alpha}
		=
		C\delta^{2-2\alpha}r^{-2\alpha}.
		\]
		Equivalently, by using $A_0^r\subset\{\mathbf m:\ 0<|\mathbf m|\le\delta r\},$ we get~\eqref{today03}.
		
		\noindent
		\textbf{3(b):
			\textbf{proof of the estimate~\eqref{today04}.}}
		We recall that \(r=|\mathbf n|\). In this substep, we  consider first the region
		\[
		|\mathbf m-\mathbf n|\le\delta r.
		\]
		Since \(\delta<1/10\), we have
		\[
		|\mathbf m|\ge |\mathbf n|-|\mathbf m-\mathbf n|\ge (1-\delta)r\ge \frac9{10}r,
		\]
		and
		\[
		|\mathbf m|\le |\mathbf n|+|\mathbf m-\mathbf n|\le (1+\delta)r\le \frac{11}{10}r.
		\]
		Thus \(|\mathbf m|\simeq r\) in this region.
		
		We show that \(T(\mathbf n,\mathbf m)\) is uniformly bounded there. The singular term in \eqref{t6} is the one
		with denominator \(|\mathbf m-\mathbf n|^2\). Since
		\[
		\mathbf m^\perp\cdot  \mathbf n
		=
		\mathbf m^\perp\cdot (\mathbf n-\mathbf m),
		\]
		we get
		\[
		\frac{|\mathbf m^\perp\cdot  \mathbf n|^2}{|\mathbf m|^2|\mathbf m-\mathbf n|^2}
		=
		\frac{|\mathbf m^\perp\cdot (\mathbf n-\mathbf m)|^2}{|\mathbf m|^2|\mathbf m-\mathbf n|^2}
		\le 1.
		\]
		We use
		\[
		|\mathbf m+\mathbf n|
		\ge |2\mathbf n|-|\mathbf m-\mathbf n|
		\ge 2r-\delta r
		\ge \frac{19}{10}r.
		\]
		Therefore
		\[
		\frac{|\mathbf m^\perp\cdot  \mathbf n|^2}{|\mathbf m|^2|\mathbf m+\mathbf n|^2}
		\le
		\frac{|\mathbf m|^2 r^2}{|\mathbf m|^2 |\mathbf m+\mathbf n|^2}
		\le
		C.
		\]
		Hence
		\[
		T(\mathbf n,\mathbf m)\le C
		\qquad
		\text{whenever } |\mathbf m-\mathbf n|\le\delta r.
		\]
		Consequently,
		\[
		|1-T(\mathbf n,\mathbf m)|\le C.
		\]
		Moreover, since \(|\mathbf m|\simeq r\),
		\[
		q_{\mathbf m}=\langle \mathbf m\rangle^{-2-2\alpha}\le C r^{-2-2\alpha}.
		\]
		Therefore
		\[
		q_{\mathbf m}|1-T(\mathbf n,\mathbf m)|
		\le
		C r^{-2-2\alpha}
		\qquad
		\text{for } |\mathbf m-\mathbf n|\le\delta r.
		\]
		The number of lattice points in the disk \(\{\mathbf m\in\mathbb Z^2:\ |\mathbf m-\mathbf n|\le\delta r\}\)
		is bounded by
		\[
		C(\delta r)^2+C.
		\]
		Indeed, the disk can be covered by \(C\bigl((\delta r)^2+1\bigr)\) unit lattice squares.
		Thus
		\[
		\begin{aligned}
			\left|
			\sum_{|\mathbf m-\mathbf n|\le\delta r}q_{\mathbf m}\bigl(1-T(\mathbf n,\mathbf m)\bigr)
			\right|
			&\le
			C r^{-2-2\alpha}\bigl(\delta^2 r^2+1\bigr)
			\\
			&=
			C\delta^2 r^{-2\alpha}
			+
			C r^{-2-2\alpha}.
		\end{aligned}
		\]
		Multiplying by \(r^{2\alpha}\), we obtain
		\begin{align}\label{today0909}
			r^{2\alpha}
			\left|
			\sum_{|\mathbf m-\mathbf n|\le\delta r}q_{\mathbf m}\bigl(1-T(\mathbf n,\mathbf m)\bigr)
			\right|
			\le
			C\delta^2+Cr^{-2}.
		\end{align}
		The region \(|\mathbf m+\mathbf n|\le\delta r\) is treated in exactly the same way. There
		\(|\mathbf m|\simeq r\), the singular term is controlled by
		\[
		\mathbf m^\perp\cdot  \mathbf n=\mathbf m^\perp\cdot (\mathbf n+\mathbf m),
		\]
		because \(\mathbf m^\perp\cdot  \mathbf m=0\), and the other denominator is bounded below by
		\[
		|\mathbf m-\mathbf n|\ge 2r-\delta r\ge \frac{19}{10}r.
		\]
		Thus
		\begin{align}\label{9191}
			r^{2\alpha}
			\left|
			\sum_{|\mathbf m+\mathbf n|\le\delta r}q_{\mathbf m}\bigl(1-T(\mathbf n,\mathbf m)\bigr)
			\right|
			\le
			C\delta^2+Cr^{-2}.
		\end{align}
		By combining these two estimates \eqref{today0909}, \eqref{9191} with \[
		A_+^r\subset\{\mathbf m:\ |\mathbf m-\mathbf n|\le\delta r\},
		\]
		and
		\[
		A_-^r\subset\{\mathbf m:\ |\mathbf m+\mathbf n|\le\delta r\},
		\] 
		we get~\eqref{today04}.
		
		\medskip
		\noindent
		\textbf{Step 3(c): proof of the estimate~\eqref{today07}.}  In this substep, for \(|\mathbf z|\le\delta\) and \(\mathbf e\in\mathbb{S}^1\), we write
		\[
		\rho:=|\mathbf z|,
		\qquad
		\cos\varphi:=\frac{\mathbf z\cdot  \mathbf e}{|\mathbf z|}.
		\]
		Then
		\[
		\frac{|\mathbf z^\perp\cdot  \mathbf e|^2}{|\mathbf z|^2}
		=
		\sin^2\varphi.
		\]
		Moreover,
		\[
		|\mathbf z\pm \mathbf e|^2
		=
		1+\rho^2\pm 2\rho\cos\varphi.
		\]
		Hence from \eqref{t8}, we get
		\[
		\tau_{\mathbf e}(\mathbf z)
		=
		\sin^2\varphi
		\left(
		\frac{1}{1+\rho^2+2\rho\cos\varphi}
		+
		\frac{1}{1+\rho^2-2\rho\cos\varphi}
		\right).
		\]
		By using the similar arguments as in step 3({\bf a}), we obtain
		\[
		\left|
		\frac{1}{1+\rho^2+2\rho\cos\varphi}
		+
		\frac{1}{1+\rho^2-2\rho\cos\varphi}
		-2
		\right|
		\le
		C\rho^2,
		\]
		with a constant \(C\) independent of \(\varphi\), \(\mathbf e\), and \(0<\delta<1/10\). In
		other words,
		\[
		\frac{1}{1+\rho^2+2\rho\cos\varphi}
		+
		\frac{1}{1+\rho^2-2\rho\cos\varphi}
		=
		2+O(\rho^2).
		\]
		Hence
		\[
		\tau_{\mathbf e}(\mathbf z)
		=
		2\sin^2\varphi+O(\rho^2),
		\]
		and therefore
		\[
		1-\tau_{\mathbf e}(\mathbf z)
		=
		1-2\sin^2\varphi+O(\rho^2)
		=
		2\cos^2\varphi-1+O(\rho^2).
		\]
		Define
		\[
		P_{\mathbf e}(\mathbf z):=
		2\frac{(\mathbf z\cdot  \mathbf e)^2}{|\mathbf z|^2}-1
		=
		2\cos^2\varphi-1.
		\]
		Then
		\begin{align}\label{t10}
			1-\tau_{\mathbf e}(\mathbf z)=P_{\mathbf e}(\mathbf z)+R_{\mathbf e}(\mathbf z),
			\qquad
			|R_{\mathbf e}(\mathbf z)|\le C|\mathbf z|^2,
		\end{align}
		for all \(0<|\mathbf z|\le\delta\), with \(C\) independent of \(\mathbf e\).
		
		We now show that the principal part has zero principal-value integral. Choose a
		rotation \(R\in SO(2)\) with \(R\mathbf e_1=\mathbf e\). By using the change of variables
		\(\mathbf z=R\mathbf y\), the ball \(\{|\mathbf z|\le\delta\}\), the measure \({\rm d}\mathbf z\), and the weight
		\(|\mathbf z|^{-2-2\alpha}\) are unchanged. Thus it is enough to take \(\mathbf e=\mathbf e_1\). In
		polar coordinates ({\em i.e.,} \(\mathbf z=\rho\mathbf e_\varphi\)),
		\[
		P_{\mathbf e_1}(\rho \mathbf e_\varphi)
		=
		2\cos^2\varphi-1
		=
		\cos(2\varphi).
		\]
		Therefore
		\[
		\int_0^{2\pi}P_{\mathbf e_1}(\rho \mathbf e_\varphi)\,{\rm d}\varphi
		=
		\int_0^{2\pi}\cos(2\varphi)\,{\rm d}\varphi
		=
		0.
		\]
		Consequently, for every \(\varepsilon\in(0,\delta)\),
		\[
		\int_{\varepsilon<|\mathbf z|\le\delta}
		|\mathbf z|^{-2-2\alpha}P_{\mathbf e}(\mathbf z)\,{\rm d}\mathbf z
		=
		0.
		\]
		Hence
		\[
		\operatorname{p.v.}\int_{|\mathbf z|\le\delta}
		|\mathbf z|^{-2-2\alpha}P_{\mathbf e}(\mathbf z)\,{\rm d}\mathbf z=0.
		\]
		By recalling ~\eqref{t9} and using the remainder estimate~\eqref{t10}, we obtain
		\[
		\begin{aligned}
			\left|
			\operatorname{p.v.}\int_{|\mathbf z|\le\delta}F_{\mathbf e}(\mathbf z)\,{\rm d}\mathbf z
			\right|
			&=
			\left|
			\operatorname{p.v.}\int_{|\mathbf z|\le\delta}
			|\mathbf z|^{-2-2\alpha}(P_{\mathbf e}(\mathbf z)+R_{\mathbf e}(\mathbf z))\,{\rm d}\mathbf z
			\right|
			\\
			&\le
			C\int_{|\mathbf z|\le\delta}|\mathbf z|^{-2-2\alpha}|\mathbf z|^2\,{\rm d}\mathbf z
			\\
			&=
			C\int_0^\delta \rho^{-2-2\alpha}\rho^2 \rho\,{\rm d}\rho
			\\
			&=
			C\int_0^\delta \rho^{1-2\alpha}\,{\rm d}\rho
			\\
			&=
			C\delta^{2-2\alpha},
		\end{aligned}
		\]
		because \(0<\alpha<1\). Thus we conclude the esitmate~\eqref{today07}.
		
		\medskip
		\noindent
		\textbf{Step 3(d): proof of the estimate~\eqref{today08}.} In this substep, we next the estimate~\eqref{today08} the contribution from the balls \(B_\delta(\mathbf e)\) and
		\(B_\delta(-\mathbf e)\). We prove the estimate near \(\mathbf e\); the estimate near \(-\mathbf e\) is
		identical.
		
		\noindent
		Assume \(|\mathbf z-\mathbf e|\le\delta<1/10\). Then
		\[
		|\mathbf z|\ge |\mathbf e|-|\mathbf z-\mathbf e|\ge 1-\delta\ge \frac9{10},
		\]
		and
		\[
		|\mathbf z|\le 1+\delta\le \frac{11}{10}.
		\]
		Thus \(|\mathbf z|\simeq1\) uniformly in \(\mathbf e\).
		
		\noindent
		The potentially singular part of \(\tau_{\mathbf e}\) near \(\mathbf z=\mathbf e\) is the term with
		\(|\mathbf z-\mathbf e|^{-2}\). Since
		\[
		\mathbf z^\perp\cdot  \mathbf e
		=
		\mathbf z^\perp\cdot (\mathbf e-\mathbf z),
		\]
		we have
		\[
		\frac{|\mathbf z^\perp\cdot  \mathbf e|^2}{|\mathbf z|^2|\mathbf z-\mathbf e|^2}
		=
		\frac{|\mathbf z^\perp\cdot (\mathbf e-\mathbf z)|^2}{|\mathbf z|^2|\mathbf z-\mathbf e|^2}
		\le
		\frac{|\mathbf z|^2|\mathbf e-\mathbf z|^2}{|\mathbf z|^2|\mathbf z-\mathbf e|^2}
		=
		1.
		\]
		The other term is clearly bounded, because
		\[
		|\mathbf z+\mathbf e|\ge |\mathbf e+\mathbf e|-|\mathbf z-\mathbf e|\ge 2-\delta\ge \frac{19}{10}.
		\]
		Therefore from~\eqref{t8}, we get
		\[
		|\tau_{\mathbf e}(\mathbf z)|\le C
		\qquad
		\text{for } |\mathbf z-\mathbf e|\le\delta,
		\]
		with \(C\) independent of \(\mathbf e\). Since \(|\mathbf z|\simeq1\) in this region, from~\eqref{t9}, we also
		have
		\[
		|F_{\mathbf e}(\mathbf z)|
		=
		|\mathbf z|^{-2-2\alpha}|1-\tau_{\mathbf e}(\mathbf z)|
		\le C.
		\]
		Thus
		\[
		\left|
		\int_{|\mathbf z-\mathbf e|\le\delta}F_{\mathbf e}(\mathbf z)\,{\rm d}\mathbf z
		\right|
		\le
		C |B_\delta(\mathbf e)|
		\le
		C\delta^2.
		\]
		The same proof near \(-\mathbf e\) gives
		\[
		\left|
		\int_{|\mathbf z+\mathbf e|\le\delta}F_{\mathbf e}(\mathbf z)\,{\rm d}\mathbf z
		\right|
		\le
		C\delta^2.
		\]
		Therefore, we obtain~\eqref{today08}.
		
		\medskip
		\noindent
		\textbf{Step 3(e): proof of the identity~\eqref{today02}.} We consider \({\bf z}=\mathbf m/r\in\Omega_{\delta,\mathbf e}\). Clearly,
		\[
		\delta\le |\mathbf z|\le\gamma,
		\]
		so \(|\mathbf m|=r|\mathbf z|\simeq r\). Moreover, from~\eqref{t6}, we get for \(\mathbf e=\mathbf n/|\mathbf n|\),
		\begin{align}\label{today300}
			T(\mathbf n,\mathbf m)=\tau_{\mathbf e}(\mathbf z).
		\end{align}
		Next, since
		\[
		q_{\mathbf m}=\langle \mathbf m\rangle^{-2-2\alpha}=(1+|\mathbf m|^2)^{-1-\alpha},
		\]
		we have
		\[
		r^{2+2\alpha}q_{\mathbf m}
		=
		r^{2+2\alpha}(1+r^2|\mathbf z|^2)^{-1-\alpha}
		=
		\left(|\mathbf z|^2+r^{-2}\right)^{-1-\alpha}.
		\]
		Because \(|\mathbf z|\ge\delta\), it follows that
		\begin{align}\label{t11}
			\sup_{\substack{\mathbf e\in\mathbb S^1\\ \mathbf z\in\Omega_{\delta,\mathbf e}}}
			\left|
			\left(|\mathbf z|^2+r^{-2}\right)^{-1-\alpha}
			-
			|\mathbf z|^{-2-2\alpha}
			\right|
			\longrightarrow0
			\qquad\text{as }r\to\infty.
		\end{align}
		Also, since \(F_{\mathbf e}\) is uniformly bounded on \(\Omega_{\delta,\mathbf e}\), the quantity
		\[
		1-\tau_{\mathbf e}(\mathbf z)
		\]
		is uniformly bounded there. Therefore, by using~\eqref{today300} and~\eqref{t9}, we obtain
		\[
		\begin{aligned}
			&\sup_{\substack{\mathbf e\in\mathbb S^1}}\Bigg|
			r^{2\alpha}
			\sum_{\mathbf m/r\in\Omega_{\delta,\mathbf e}}
			q_{\mathbf m}\bigl(1-T(\mathbf n,\mathbf m)\bigr)
			-
			r^{-2}
			\sum_{\mathbf m/r\in\Omega_{\delta,\mathbf e}}F_{\mathbf e}(\mathbf m/r)
			\Bigg|
			\\
			&\qquad=
			\sup_{\substack{\mathbf e\in\mathbb S^1}}\Bigg|
			r^{-2}
			\sum_{\mathbf m/r\in\Omega_{\delta,\mathbf e}}
			\left[
			r^{2+2\alpha}q_{\mathbf m}
			-
			|\mathbf m/r|^{-2-2\alpha}
			\right]
			\bigl(1-\tau_{\mathbf e}(\mathbf m/r)\bigr)
			\Bigg|
			\\
			&\qquad\le
			C
			\sup_{\substack{\mathbf e\in\mathbb S^1\\ \mathbf z\in\Omega_{\delta,\mathbf e}}}
			\left|
			\left(|\mathbf z|^2+r^{-2}\right)^{-1-\alpha}
			-
			|\mathbf z|^{-2-2\alpha}
			\right|
			\cdot 
			r^{-2}\#\{\mathbf m:\ \mathbf m/r\in B_\gamma\}.
		\end{aligned}
		\]
		Since
		\[
		r^{-2}\#\{\mathbf m:\ \mathbf m/r\in B_\gamma\}\le C,
		\]
		the right-hand side tends to \(0\), uniformly in \(\mathbf e\in\mathbb S^1\), by \eqref{t11}. Hence we obtain~\eqref{today02}.

		\noindent
		\textbf{Step 3(f): proof of the identity~{\eqref{eq:regular_region_plain_riemann}}.}
		On \(\Omega_{\delta,\mathbf e}\), we have
		\[
		|\mathbf z|\ge \delta,
		\qquad
		|\mathbf z-\mathbf e|\ge\delta,
		\qquad
		|\mathbf z+\mathbf e|\ge\delta,
		\qquad
		|\mathbf z|\le \gamma.
		\]
		Therefore every denominator appearing in
		\[
		F_{\mathbf e}(\mathbf z)
		=
		|\mathbf z|^{-2-2\alpha}
		\left[
		1-
		\frac{|\mathbf z^\perp\cdot  \mathbf e|^2}{|\mathbf z|^2}
		\left(
		\frac{1}{|\mathbf z+\mathbf e|^2}
		+
		\frac{1}{|\mathbf z-\mathbf e|^2}
		\right)
		\right]
		\]
		is bounded away from zero by a constant depending only on \(\delta\). Moreover
		\(|\mathbf e|=1\). Differentiating the expression for \(F_{\mathbf e}\) with respect to \(\mathbf z\), each
		term is a finite sum of products of powers of
		\[
		|\mathbf z|^{-1},
		\qquad
		|\mathbf z-\mathbf e|^{-1},
		\qquad
		|\mathbf z+\mathbf e|^{-1},
		\]
		and bounded linear factors involving \(\mathbf z\) and \(\mathbf e\). Hence there exists a
		constant \(C_\delta>0\), independent of \(\mathbf e\in\mathbb S^1\), such that
		\[
		\|F_{\mathbf e}\|_{C^1(\Omega_{\delta,\mathbf e})}\le C_\delta.
		\]
		
		\medskip
		\noindent
		
		\noindent
		We now apply Lemma~\ref{Riemann-sum} to
		\[
		D_{\mathbf e}=\Omega_{\delta,\mathbf e},
		\qquad
		G_{\mathbf e}=F_{\mathbf e}.
		\]
		The boundary of $\Omega_{\delta,{\bf e}}$ is contained in the union of four
		circles:
		\[
		\partial B_\gamma,\qquad
		\partial B_\delta({\bf 0}),\qquad
		\partial B_\delta(\mathbf e),\qquad
		\partial B_\delta(-\mathbf e).
		\]
		Hence its total boundary length is bounded uniformly in \(\mathbf e\). Consequently,
		the boundary-layer estimate \eqref{eq:uniform_boundaryayer} holds uniformly in
		\(\mathbf e\). Therefore \eqref{eq:uniform_Riemann_sum_general} gives~\eqref{eq:regular_region_plain_riemann}.
		
	\end{proof}
	In the following theorem, we establish the main coercivity estimate. Its proof relies on the preceding lemma as a key structural input, combined with further Fourier-analytic arguments.
	
	\begin{theorem}[coercivity estimate]
		\label{prop:finite_dimensional_coercivity}
		Fix \(0<\alpha<1\). Let \(N,K\in\mathbb N\) satisfy $K\ge \gamma\sqrt2\,N$, with $\gamma>1+\sqrt{2}$. There exist constants
		\(c_0>0\) and \(C_0\ge0\), depending only on \(\alpha\), such that for every
		\(\omega\in\mathbb{V}_N\),
		\begin{equation}\label{eq:finite_dimensional_coercivity}
			\mathfrak D_{N,K}(\omega)
			-
			c_K\|\omega\|_{\mathbb{L}^2(\T^2)}^2
			\le
			-c_0\|\omega\|_{\mathbb{H}^{-\alpha}(\T^2)}^2
			+
			C_0\|\omega\|_{\mathbb{H}^{-1}(\T^2)}^2 .
		\end{equation}
		
	\end{theorem}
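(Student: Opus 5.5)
The plan is to diagonalize the quadratic form $\mathfrak D_{N,K}(\omega)$ in Fourier variables and reduce the estimate to the pointwise symbol bound \eqref{eq:finite_symbol_gap} of Lemma~\ref{lem:finite_discrete_symbol_gap}.

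\textbf{Step 1: Fourier expansion of a single noise mode.} For $\omega=\sum_{\mathbf n\in\mathcal K_N\setminus\{\mathbf 0\}}\widehat\omega_{\mathbf n}\mathrm e_{\mathbf n}$ and a fixed $\mathbf m\in\mathcal M_K$, write $\cos(\mathbf m\cdot\mathbf x)=\tfrac12(\mathrm e_{\mathbf m}+\mathrm e_{-\mathbf m})$ and $\sin(\mathbf m\cdot\mathbf x)=\tfrac1{2i}(\mathrm e_{\mathbf m}-\mathrm e_{-\mathbf m})$. Then
\[
(\boldsymbol\sigma_{\mathbf m,c}\cdot\nabla)\omega
=\tfrac{i}{2}\sqrt{2q(\mathbf m)}\sum_{\mathbf n}(\mathbf e_{\mathbf m}\cdot\mathbf n)\,\widehat\omega_{\mathbf n}\bigl(\mathrm e_{\mathbf n+\mathbf m}+\mathrm e_{\mathbf n-\mathbf m}\bigr),
\]
and the sine mode gives the analogous expression with a relative minus sign between the two shifts.

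\textbf{Step 2: sum the cosine and sine contributions.} Adding the $c$ and $s$ contributions, the cross terms (between the $+\mathbf m$ and $-\mathbf m$ shifts) cancel. This is the usual parallelogram identity $|a+b|^2+|a-b|^2=2|a|^2+2|b|^2$, applied coefficientwise at each output frequency $\mathbf p$. The result is
\[
\sum_{\ell\in\{c,s\}}\bigl\|(-\Delta)^{-1/2}\mathcal P_N[(\boldsymbol\sigma_{\mathbf m,\ell}\cdot\nabla)\omega]\bigr\|^2_{\mathbb L^2}
= q(\mathbf m)\sum_{\mathbf p}\frac{\mathbf 1_{\{0<|\mathbf p|_\infty\le N\}}}{|\mathbf p|^2}\Bigl(|\mathbf e_{\mathbf m}\cdot(\mathbf p-\mathbf m)|^2|\widehat\omega_{\mathbf p-\mathbf m}|^2+|\mathbf e_{\mathbf m}\cdot(\mathbf p+\mathbf m)|^2|\widehat\omega_{\mathbf p+\mathbf m}|^2\Bigr),
\]
up to the normalization of the $\mathbb L^2$ norm, which I will track with the $(2\pi)^2$ factor. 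I use $\mathbf e_{\mathbf m}\cdot(\mathbf n\pm\mathbf m)=\mathbf e_{\mathbf m}\cdot\mathbf n$, since $\mathbf e_{\mathbf m}\perp\mathbf m$. Reindexing $\mathbf n=\mathbf p\mp\mathbf m$, I then drop the projection indicator $\mathbf 1_{\{0<|\mathbf n\pm\mathbf m|_\infty\le N\}}$, keeping only the indicator $\mathbf n\pm\mathbf m\neq\mathbf 0$. This step is the crucial sign point: dropping the projection only \emph{increases} $\mathfrak D_{N,K}$, so the upper bound survives. This gives the diagonal bound
\[
\mathfrak D_{N,K}(\omega)\le\sum_{\mathbf n\in\mathcal K_N\setminus\{\mathbf 0\}}|\widehat\omega_{\mathbf n}|^2\sum_{\mathbf m\in\mathcal M_K}q(\mathbf m)\,T(\mathbf n,\mathbf m),
\]
with $T$ exactly as in \eqref{t6}, up to the common normalization constant.

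\textbf{Step 3: combine with $c_K\|\omega\|^2_{\mathbb L^2}$.} By \eqref{eq:cK_def}, $c_K\|\omega\|_{\mathbb L^2}^2=\sum_{\mathbf n}|\widehat\omega_{\mathbf n}|^2\sum_{\mathbf m\in\mathcal M_K}q(\mathbf m)$. Subtracting, I obtain
\[
\mathfrak D_{N,K}(\omega)-c_K\|\omega\|^2_{\mathbb L^2}\le-\sum_{\mathbf n}|\widehat\omega_{\mathbf n}|^2\sum_{\mathbf m\in\mathcal M_K}q(\mathbf m)\bigl(1-T(\mathbf n,\mathbf m)\bigr).
\]
Lemma~\ref{lem:finite_discrete_symbol_gap} applies because $K\ge\gamma\sqrt2N$. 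It bounds the inner sum below by $c_0\langle\mathbf n\rangle^{-2\alpha}-C_0\langle\mathbf n\rangle^{-2}$. Since $|\mathbf n|\ge1$, we have $\langle\mathbf n\rangle^{-2\alpha}\ge 2^{-\alpha}|\mathbf n|^{-2\alpha}$ and $\langle\mathbf n\rangle^{-2}\le|\mathbf n|^{-2}$. Summing over $\mathbf n$ then gives \eqref{eq:finite_dimensional_coercivity}, after adjusting the constants (including the factor $(2\pi)^2$ relating the $\mathbb L^2$ norm to Fourier coefficients).

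\textbf{Main obstacle.} The delicate step is Step 2. I must track the real-mode normalization $\sqrt{2q}$ against the factors $\tfrac12$ so that the coefficient of $T$ matches exactly the coefficient $q(\mathbf m)$ in $c_K$. Any mismatch would destroy the $1-T$ structure. I also need the cross terms between shifts $\pm\mathbf m$ to cancel exactly once sine and cosine are summed, with the coefficient of $|\widehat\omega_{\mathbf n}|^2$ summing to $2\cdot\tfrac14\cdot2q=q$ for each shift. I would verify this by computing the $\mathbf p$-th Fourier coefficient explicitly for both $\ell=c,s$.
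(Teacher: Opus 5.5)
Your proposal is correct and follows essentially the same route as the paper: you Fourier-diagonalize the noise form, use the cosine--sine parallelogram cancellation to obtain the multiplier $\sum_{\mathbf m\in\mathcal M_K}q(\mathbf m)T(\mathbf n,\mathbf m)$, and then invoke Lemma~\ref{lem:finite_discrete_symbol_gap}. The differences are cosmetic: the paper discards $\mathcal P_N$ up front via $\|\mathcal P_N g\|_{\mathbb L^2}\le\|g\|_{\mathbb L^2}$, whereas you drop the output-frequency indicator after reindexing; and you explicitly convert $\langle\mathbf n\rangle$ to $|\mathbf n|$ to match the paper's homogeneous Sobolev norms, a step the paper glosses over.
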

	
	\begin{proof}
		We divide the proof into several steps.
		
		\medskip
		\noindent\textbf{Step 1: reduction to the unprojected quadratic form.} Since \(\mathcal P_N\) is the orthogonal Fourier projector onto the modes
		\(\{|{\bf m}|_\infty\le N\}\), it is a Fourier multiplier. The operator
		\((-\Delta)^{-1/2}\) is also a Fourier multiplier. Hence the two operators commute:
		\[
		(-\Delta)^{-1/2}\mathcal P_N f
		=
		\mathcal P_N(-\Delta)^{-1/2}f.
		\]
		Moreover, \(\mathcal P_N\) is an orthogonal projection on \(\mathbb{L}^2(\T^2)\), so
		\[
		\|\mathcal P_N g\|_{ \mathbb{L}^2(\T^2)}\le \|g\|_{ \mathbb{L}^2(\T^2)}.
		\]
		Therefore, for every \(k\in\mathcal I^K\),
		\begin{align*}
			\Big\|
			(-\Delta)^{-1/2}
			\mathcal P_N\big[(\boldsymbol{\sigma}_k\cdot \nabla)\omega\big]
			\Big\|_{ \mathbb{L}^2(\T^2)}
			&=
			\Big\|
			\mathcal P_N
			(-\Delta)^{-1/2}\big[(\boldsymbol{\sigma}_k\cdot \nabla)\omega\big]
			\Big\|_{ \mathbb{L}^2(\T^2)}\\
			&\le
			\Big\|
			(-\Delta)^{-1/2}\big[(\boldsymbol{\sigma}_k\cdot \nabla)\omega\big]
			\Big\|_{ \mathbb{L}^2(\T^2)}.
		\end{align*}
		By summing over \(k\) and recalling Definition~\ref{def:DLK}, we obtain
		\[
		\mathfrak D_{N,K}(\omega)
		\le
		\widetilde{\mathfrak D}_{K}(\omega),
		\]
		where
		\begin{align}\label{today345}
			\widetilde{\mathfrak D}_{K}(\omega)
			:=
			\sum_{k\in\mathcal I^K}
			\Big\|
			(-\Delta)^{-1/2}
			(\boldsymbol{\sigma}_k\cdot \nabla)\omega
			\Big\|_{\mathbb{L}^2(\T^2)}^2 .
		\end{align}
		Thus it is enough to estimate the unprojected quadratic form
		\(\widetilde{\mathfrak D}_{K}\).
		
		\medskip
		\noindent\textbf{Step 2: Fourier expansion of \(\omega\).}
		Since \(\omega\in \mathbb{V}_N\) has zero mean, it has the
		finite Fourier expansion
		\begin{align}\label{today55}
			\omega(\mathbf x)=\sum_{{\bf 0}\neq \mathbf n\in\mathcal K_N}\widehat\omega(\mathbf n)e^{i\mathbf n\cdot \mathbf x}.
		\end{align}

		\medskip
		\noindent\textbf{Step 3: action of one noise mode on one Fourier mode.} We recall the definition of \eqref{today07} and \eqref{eq:noise_ modes}. Fix \({\bf 0}\neq \mathbf n\in\mathcal K_N\). Since
		\[
		\nabla\big(e^{i\mathbf n\cdot \mathbf x}\big)=i\mathbf n\,e^{i\mathbf n\cdot \mathbf x},
		\]
		we get
		\[
		(\boldsymbol{\sigma}_{\mathbf m,c}\cdot \nabla)\big(\widehat\omega(\mathbf n)e^{i\mathbf n\cdot \mathbf x}\big)
		=
		i\sqrt{2q(\mathbf m)}\,(\mathbf e_{\mathbf m}\cdot \mathbf n)\widehat\omega(\mathbf n)\,
		\cos(\mathbf m\cdot \mathbf x)\,e^{i\mathbf n\cdot \mathbf x}.
		\]
		By using
		\[
		\cos(\mathbf m\cdot \mathbf x)=\frac12\Big(e^{i\mathbf m\cdot \mathbf x}+e^{-i\mathbf m\cdot \mathbf x}\Big),
		\]
		this becomes
		\[
		(\boldsymbol{\sigma}_{\mathbf m,c}\cdot \nabla)\big(\widehat\omega(\mathbf n)e^{i\mathbf n\cdot \mathbf x}\big)
		=
		\frac{i\sqrt{2q(\mathbf m)}}{2}
		(\mathbf e_{\mathbf m}\cdot \mathbf n)\widehat\omega(\mathbf n)
		\Big(
		e^{i(\mathbf n+\mathbf m)\cdot \mathbf x}+e^{i(\mathbf n-\mathbf m)\cdot \mathbf x}
		\Big).
		\]
		Similarly, since
		\[
		\sin(\mathbf m\cdot \mathbf x)=\frac{1}{2i}
		\Big(e^{i\mathbf m\cdot \mathbf x}-e^{-i\mathbf m\cdot \mathbf x}\Big),
		\]
		we obtain
		\[
		(\boldsymbol{\sigma}_{\mathbf m,s}\cdot \nabla)\big(\widehat\omega(\mathbf n)e^{i\mathbf n\cdot \mathbf x}\big)
		=
		\frac{\sqrt{2q(\mathbf m)}}{2}
		(\mathbf e_{\mathbf m}\cdot \mathbf n)\widehat\omega(\mathbf n)
		\Big(
		e^{i(\mathbf n+\mathbf m)\cdot \mathbf x}-e^{i(\mathbf n-\mathbf m)\cdot \mathbf x}
		\Big).
		\]
		
		\medskip
		\noindent\textbf{Step 4: Fourier coefficients of \((\boldsymbol{\sigma}_{\mathbf m,c}\cdot \nabla)\omega\) and \((\boldsymbol{\sigma}_{\mathbf m,s}\cdot \nabla)\omega\).}
		We now pass from the single-mode formulas of Step 3 to the full Fourier expansion of
		\(\omega\). By linearity,
		\[
		(\boldsymbol{\sigma}_{\mathbf m,c}\cdot \nabla)\omega
		=
		\sum_{{\bf 0}\neq \mathbf n\in\mathcal K_N}
		(\boldsymbol{\sigma}_{\mathbf m,c}\cdot \nabla)
		\big(\widehat\omega(\mathbf n)e^{i\mathbf n\cdot \mathbf x}\big),
		\]
		and similarly
		\[
		(\boldsymbol{\sigma}_{\mathbf m,s}\cdot \nabla)\omega
		=
		\sum_{{\bf 0}\neq \mathbf n\in\mathcal K_N}
		(\boldsymbol{\sigma}_{\mathbf m,s}\cdot \nabla)
		\big(\widehat\omega(\mathbf n)e^{i\mathbf n\cdot \mathbf x}\big).
		\]
		We obtain
		\[
		\begin{aligned}
			(\boldsymbol{\sigma}_{\mathbf m,c}\cdot \nabla)\omega
			&=
			\frac{i\sqrt{2q(\mathbf m)}}{2}
			\sum_{{\bf 0}\neq \mathbf n\in\mathcal K_N}
			(\mathbf e_{\mathbf m}\cdot \mathbf n)\widehat\omega(\mathbf n)
			\Big(
			e^{i(\mathbf n+\mathbf m)\cdot \mathbf x}+e^{i(\mathbf n-\mathbf m)\cdot \mathbf x}
			\Big),
			\\[1mm]
			(\boldsymbol{\sigma}_{\mathbf m,s}\cdot \nabla)\omega
			&=
			\frac{\sqrt{2q(\mathbf m)}}{2}
			\sum_{{\bf 0}\neq \mathbf n\in\mathcal K_N}
			(\mathbf e_{\mathbf m}\cdot \mathbf n)\widehat\omega(\mathbf n)
			\Big(
			e^{i(\mathbf n+\mathbf m)\cdot \mathbf x}-e^{i(\mathbf n-\mathbf m)\cdot \mathbf x}
			\Big).
		\end{aligned}
		\]
		We now compute the coefficient of the Fourier mode \(e^{i\mathbf r\cdot \mathbf x}\), where
		\(\mathbf r\in\mathbb Z^2\). In the cosine case, the mode \(e^{i\mathbf r\cdot \mathbf x}\) can arise
		in exactly two ways:
		\begin{itemize}
			\item from the term \(e^{i(\mathbf n+\mathbf m)\cdot \mathbf x}\), which requires \(\mathbf r=\mathbf n+\mathbf m\), {\em i.e.,}
			\(\mathbf n=\mathbf r-\mathbf m\);
			\item from the term \(e^{i(\mathbf n-\mathbf m)\cdot \mathbf x}\), which requires \(\mathbf r=\mathbf n-\mathbf m\), {\em i.e.,}
			\(\mathbf n=\mathbf r+\mathbf m\).
		\end{itemize}
		Therefore the \(\mathbf r\)-th Fourier coefficient of
		\((\boldsymbol{\sigma}_{\mathbf m,c}\cdot \nabla)\omega\) is
		
		\[
		\widehat{(\boldsymbol{\sigma}_{\mathbf m,c}\cdot \nabla)\omega}(\mathbf r)
		=
		i\sqrt{\frac{q(\mathbf m)}{2}}
		\Big(
		(\mathbf e_{\mathbf m}\cdot (\mathbf r-\mathbf m))\,\widehat\omega(\mathbf r-\mathbf m)
		+
		(\mathbf e_{\mathbf m}\cdot (\mathbf r+\mathbf m))\,\widehat\omega(\mathbf r+\mathbf m)
		\Big).
		\]
		The sine case is identica except that the second term carries a minus sign.
		Indeed, again the mode \(e^{i\mathbf r\cdot \mathbf x}\) comes from \(\mathbf n=\mathbf r-\mathbf m\) and \(\mathbf n=\mathbf r+\mathbf m\),
		and therefore
		\[
		\widehat{(\boldsymbol{\sigma}_{\mathbf m,s}\cdot \nabla)\omega}(\mathbf r)
		=
		\sqrt{\frac{q(\mathbf m)}{2}}
		\Big(
		(\mathbf e_{\mathbf m}\cdot (\mathbf r-\mathbf m))\,\widehat\omega(\mathbf r-\mathbf m)
		-
		(\mathbf e_{\mathbf m}\cdot (\mathbf r+\mathbf m))\,\widehat\omega(\mathbf r+\mathbf m)
		\Big).
		\]
		Finally, we adopt the convention
		\begin{align}\label{today055}
			\widehat\omega(\mathbf a)=0
			\qquad\text{if }\mathbf a\notin\mathcal K_N,
		\end{align}
		so that the above formulas hold for every \(\mathbf r\in\mathbb Z^2\) without any further
		restriction.
		
		\medskip
		\noindent\textbf{Step 5: Parseval and cancellation of the cross terms.}
		Since \(\operatorname{div}\boldsymbol{\sigma}_{\mathbf m,c}=0\), the transport term
		\((\boldsymbol{\sigma}_{\mathbf m,c}\cdot \nabla)\omega\) has zero mean. Indeed,
		\[
		\int_{\T^2}(\boldsymbol{\sigma}_{\mathbf m,c}\cdot \nabla)\omega\,{\rm d}\mathbf x
		=
		-\int_{\T^2}\omega\,\operatorname{div}\boldsymbol{\sigma}_{\mathbf m,c}\,{\rm d}\mathbf x
		=0.
		\]
		Hence \((-\Delta)^{-1/2}(\boldsymbol{\sigma}_{\mathbf m,c}\cdot \nabla)\omega\) is well defined as the
		mean-zero Fourier multiplier
		\[
		(-\Delta)^{-1/2}(\boldsymbol{\sigma}_{\mathbf m,c}\cdot \nabla)\omega
		=
		\sum_{\mathbf r\in\mathbb Z^2\setminus\{{\bf 0}\}}
		|\mathbf r|^{-1}\,
		\widehat{(\boldsymbol{\sigma}_{\mathbf m,c}\cdot \nabla)\omega}(\mathbf r)\,
		e^{i\mathbf r\cdot \mathbf x}.
		\]
		For \(\mathbf r\in\mathbb Z^2\setminus\{{\bf 0}\}\), Parseval's identity gives
		\[
		\Big\|(-\Delta)^{-1/2}
		(\boldsymbol{\sigma}_{\mathbf m,c}\cdot \nabla)\omega
		\Big\|_{ \mathbb{L}^2(\T^2)}^2
		=
		\sum_{\mathbf r\neq{\bf 0}}
		\frac{
			\big|
			\widehat{(\boldsymbol{\sigma}_{\mathbf m,c}\cdot \nabla)\omega}(\mathbf r)
			\big|^2
		}{|\mathbf r|^2},
		\]
		and similarly
		\[
		\Big\|(-\Delta)^{-1/2}
		(\boldsymbol{\sigma}_{\mathbf m,s}\cdot \nabla)\omega
		\Big\|_{ \mathbb{L}^2(\T^2)}^2
		=
		\sum_{\mathbf r\neq{\bf 0}}
		\frac{
			\big|
			\widehat{(\boldsymbol{\sigma}_{\mathbf m,s}\cdot \nabla)\omega}(\mathbf r)
			\big|^2
		}{|\mathbf r|^2}.
		\]
		We set
		\[
		a_{\mathbf r}:=(\mathbf e_{\mathbf m}\cdot (\mathbf r-\mathbf m))\,\widehat\omega(\mathbf r-\mathbf m),
		\qquad
		b_{\mathbf r}:=(\mathbf e_{\mathbf m}\cdot (\mathbf r+\mathbf m))\,\widehat\omega(\mathbf r+\mathbf m).
		\]
		By Step~4, then
		\[
		\widehat{(\boldsymbol{\sigma}_{\mathbf m,c}\cdot \nabla)\omega}(\mathbf r)
		=
		i\sqrt{\frac{q(\mathbf m)}{2}}\,(a_{\mathbf r}+b_{\mathbf r}),
		\qquad
		\widehat{(\boldsymbol{\sigma}_{\mathbf m,s}\cdot \nabla)\omega}(\mathbf r)
		=
		\sqrt{\frac{q(\mathbf m)}{2}}\,(a_{\mathbf r}-b_{\mathbf r}).
		\]
		Hence, by binomial formula
		\[
		\begin{aligned}
			&
			\big|
			\widehat{(\boldsymbol{\sigma}_{\mathbf m,c}\cdot \nabla)\omega}(\mathbf r)
			\big|^2
			+
			\big|
			\widehat{(\boldsymbol{\sigma}_{\mathbf m,s}\cdot \nabla)\omega}(\mathbf r)
			\big|^2
			\\
			&\qquad=
			\frac{q(\mathbf m)}{2}
			\Big(
			|a_{\mathbf r}+b_{\mathbf r}|^2+|a_{\mathbf r}-b_{\mathbf r}|^2
			\Big)
			\\
			&\qquad=
			q(\mathbf m)\big(|a_{\mathbf r}|^2+|b_{\mathbf r}|^2\big).
		\end{aligned}
		\]
		This is the exact cancellation of the cross terms between the cosine and sine
		contributions.
		
		\noindent
		By substituting this identity into the Parseval formulas, we obtain
		\[
		\begin{aligned}
			&\Big\|(-\Delta)^{-1/2}
			(\boldsymbol{\sigma}_{\mathbf m,c}\cdot \nabla)\omega
			\Big\|_{ \mathbb{L}^2(\T^2)}^2
			+
			\Big\|(-\Delta)^{-1/2}
			(\boldsymbol{\sigma}_{\mathbf m,s}\cdot \nabla)\omega
			\Big\|_{ \mathbb{L}^2(\T^2)}^2
			\\
			&\qquad=
			q(\mathbf m)\sum_{\mathbf r\neq{\bf 0}}
			\frac{
				|\mathbf e_{\mathbf m}\cdot (\mathbf r-\mathbf m)|^2\,|\widehat\omega(\mathbf r-\mathbf m)|^2
				+
				|\mathbf e_{\mathbf m}\cdot (\mathbf r+\mathbf m)|^2\,|\widehat\omega(\mathbf r+\mathbf m)|^2
			}{|\mathbf r|^2}.
		\end{aligned}
		\]
		
		\medskip
		\noindent\textbf{Step 6: reindexing.}
		We now reindex the two sums: in the first term, set \(\mathbf n=\mathbf r-\mathbf m\) and in the second term, set \(\mathbf n=\mathbf r+\mathbf m\). Since the sums are over \(\mathbf r\neq{\bf 0}\), this produces the indicators
		\(\mathbf 1_{\{\mathbf n+\mathbf {\bf m}\neq{\bf 0}\}}\) and \(\mathbf 1_{\{\mathbf n-\mathbf {\bf m}\neq{\bf 0}\}}\). Thus on using~\eqref{today055},
		
		\begin{align}\label{today344}
			&\Big\|(-\Delta)^{-1/2}
			(\boldsymbol{\sigma}_{\mathbf m,c}\cdot \nabla)\omega
			\Big\|_{ \mathbb{L}^2(\T^2)}^2
			+
			\Big\|(-\Delta)^{-1/2}
			(\boldsymbol{\sigma}_{\mathbf m,s}\cdot \nabla)\omega
			\Big\|_{ \mathbb{L}^2(\T^2)}^2\notag
			\\
			&\qquad=
			q(\mathbf m)
			\sum_{{\bf 0}\neq \mathbf n\in\mathcal K_N}
			|\mathbf e_{\mathbf m}\cdot \mathbf n|^2
			\left(
			\frac{\mathbf 1_{\{\mathbf n+\mathbf {\bf m}\neq{\bf 0}\}}}{|\mathbf n+\mathbf m|^2}
			+
			\frac{\mathbf 1_{\{\mathbf n-\mathbf {\bf m}\neq{\bf 0}\}}}{|\mathbf n-\mathbf m|^2}
			\right)
			|\widehat\omega(\mathbf n)|^2 .
		\end{align}
		Here the zero mode is omitted because \((-\Delta)^{-1/2}\) is defined only on
		mean-zero functions.
		
		\medskip
		\noindent\textbf{Step 7: diagonal representation of the quadratic form.}
		By summing over \(\mathbf m\in\mathcal M_K\) in \eqref{today344} and using~\eqref{today345}, we obtain
		\[
		\widetilde{\mathfrak D}_{K}(\omega)
		=
		\sum_{{\bf 0}\neq \mathbf n\in\mathcal K_N}
		A_{K}(\mathbf n)\,|\widehat\omega(\mathbf n)|^2,
		\]
		where
		\[
		A_{K}(\mathbf n)
		:=
		\sum_{\mathbf m\in\mathcal M_K}
		q(\mathbf m)
		|\mathbf e_{\mathbf m}\cdot \mathbf n|^2
		\left(
		\frac{\mathbf 1_{\{\mathbf n+\mathbf {\bf m}\neq{\bf 0}\}}}{|\mathbf n+\mathbf m|^2}
		+
		\frac{\mathbf 1_{\{\mathbf n-\mathbf {\bf m}\neq{\bf 0}\}}}{|\mathbf n-\mathbf m|^2}
		\right).
		\]
		Equivalently,
		\[
		A_{K}(\mathbf n)
		=
		\sum_{\mathbf m\in\mathcal M_K}
		q(\mathbf m)\,T(\mathbf n,\mathbf m),
		\]
		where \(T\) is introduced in
		Lemma~\ref{lem:finite_discrete_symbol_gap}.
		
		\medskip
		\noindent\textbf{Step 8: use the Lemma~\ref{lem:finite_discrete_symbol_gap}.}
		By definition,
		\[
		c_K=\sum_{\mathbf m\in\mathcal M_K} q(\mathbf m).
		\]
		Hence
		\[
		c_K-A_{K}(\mathbf n)
		=
		\sum_{\mathbf m\in\mathcal M_K}
		q(\mathbf m)\bigl(1-T(\mathbf n,\mathbf m)\bigr).
		\]
		Since \(K\ge \gamma\sqrt2\,N\) and \({\bf 0}\neq \mathbf n\in\mathcal K_N\) with $\gamma>1+\sqrt{2}$,
		Lemma~\ref{lem:finite_discrete_symbol_gap} applies and yields
		\[
		c_K-A_{K}(\mathbf n)
		\ge
		c_0\langle \mathbf n\rangle^{-2\alpha}
		-
		C_0\langle \mathbf n\rangle^{-2}.
		\]
		Equivalently,
		\[
		A_{K}(\mathbf n)-c_K
		\le
		-c_0\langle \mathbf n\rangle^{-2\alpha}
		+
		C_0\langle \mathbf n\rangle^{-2}.
		\]
		
		\medskip
		\noindent\textbf{Step 9: conclude.}
		By using the estimate from Step 1 and the diagonal representation from Step 7, we get
		\[
		\begin{aligned}
			\mathfrak D_{N,K}(\omega)-c_K\|\omega\|_{ \mathbb{L}^2(\T^2)}^2
			&\le
			\widetilde{\mathfrak D}_{K}(\omega)-c_K\|\omega\|_{ \mathbb{L}^2(\T^2)}^2
			\\
			&=
			\sum_{{\bf 0}\neq \mathbf n\in\mathcal K_N}
			\bigl(A_{K}(\mathbf n)-c_K\bigr)|\widehat\omega(\mathbf n)|^2
			\\
			&\le
			-c_0
			\sum_{{\bf 0}\neq \mathbf n\in\mathcal K_N}
			\langle \mathbf n\rangle^{-2\alpha}|\widehat\omega(\mathbf n)|^2
			+
			C_0
			\sum_{{\bf 0}\neq \mathbf n\in\mathcal K_N}
			\langle \mathbf n\rangle^{-2}|\widehat\omega(\mathbf n)|^2.
		\end{aligned}
		\]
		By the definition of the Sobolev norms,
		\[
		\sum_{{\bf 0}\neq \mathbf n\in\mathcal K_N}
		\langle \mathbf n\rangle^{-2\alpha}|\widehat\omega(\mathbf n)|^2
		=
		\|\omega\|_{\mathbb{H}^{-\alpha}(\T^2)}^2,
		\]
		and
		\[
		\sum_{{\bf 0}\neq \mathbf n\in\mathcal K_N}
		\langle \mathbf n\rangle^{-2}|\widehat\omega(\mathbf n)|^2
		=
		\|\omega\|_{\mathbb{H}^{-1}(\T^2)}^2.
		\]
		Therefore
		\[
		\mathfrak D_{N,K}(\omega)-c_K\|\omega\|_{\mathbb{L}^2(\T^2)}^2
		\le
		-c_0\|\omega\|_{\mathbb{H}^{-\alpha}(\T^2)}^2
		+
		C_0\|\omega\|_{\mathbb{H}^{-1}(\T^2)}^2,
		\]
		which is exactly \eqref{eq:finite_dimensional_coercivity}.
	\end{proof}
	
	\begin{remark}[Regularization from high Kraichnan modes]
		\label{rem:regularization_high_modes}
		The admissibility condition relating the spatial Fourier cutoff \(N\) to the
		noise cutoff \(K\) is $ K\ge \gamma\sqrt{2}\,N,\,\gamma>1+\sqrt{2}.$ This condition should be viewed as a scale-separation requirement between the
		resolved vorticity modes and the Kraichnan modes retained in the truncated noise.
		It plays a structural role in the coercivity mechanism.
		
		The first threshold, \(\gamma>1\), is intrinsic. The continuum defect
		\(j_{\alpha,\gamma}\) is strictly positive only when the noise cutoff extends
		beyond the range of resolved spatial frequencies. For \(0<\gamma\le1\), one has
		\(j_{\alpha,\gamma}=0\), and the corresponding noise modes do not yield a
		positive dissipative contribution; see Remark~\ref{rem:importance_gamma}. Thus the coercive effect is generated by
		noise modes which are genuinely higher than the spatial resolution.
		
		The stronger condition \(\gamma>1+\sqrt{2}\) enters the discrete lattice analysis in Lemma~\ref{lem:finite_discrete_symbol_gap}.
		More precisely, it is used in the proof of Lemma~\ref{lem:finite_discrete_symbol_gap}
		to control the remainder term in \eqref{new600}. This additional separation
		ensures that the relevant interactions between resolved modes and retained noise
		modes remain in the coercive regime, so that the non-coercive contributions can
		be controlled.
		
		Consequently, the condition \(K\ge\gamma\sqrt{2}\,N\) guarantees that the
		truncated Kraichnan noise contains sufficiently many high-frequency modes
		relative to the spatial discretization. Under this condition, the discrete noise
		quadratic form~\eqref{quadratic form} produces the negative-Sobolev dissipation quantified by the
		coercivity estimate~\eqref{eq:finite_dimensional_coercivity}. In this sense, the regularizing effect captured by the fully
		discrete scheme~\eqref{eq:scheme_keep} comes from the action of high Kraichnan modes on the resolved
		vorticity field.
	\end{remark}

	\section{Uniform discrete energy estimate}
	To prove the convergence of the fully discrete scheme \eqref{eq:scheme_keep}, we first need uniform a priori bounds that are compatible with the compactness argument and the passage to the limit. In this section, we therefore establish uniform energy estimates for the discrete approximations in the norms relevant for the later analysis, whose derivation rests on the coercivity estimate \eqref{eq:finite_dimensional_coercivity}, in particular.
	
	A basic ingredient in these estimates is the mixed-time structure of the convection term in \eqref{eq:scheme_keep}; see also Remark~\ref{rem:mixed_time_conv}. When the scheme is tested in the \(\mathbb{H}^{-1}(\T^2)\)-sense against \((-\Delta)^{-1}\omega_N^{n+1}\), this structure yields an exact cancellation of the transport contribution. We record this fact in the following lemma, which will be used repeatedly in the derivation of the uniform energy bounds.

	\begin{lemma}[Exact cancellation of the mixed-time convection in the $\mathbb{H}^{-1}(\T^2)$ test]
		\label{lem:conv_cancel_keep}
		Let $\omega^n$ be any scalar field on $\T^2$, and let $\omega^{n+1}$ be a
		mean-zero scalar field on $\T^2$. Define
		\[
		\psi^{n+1}:=(-\Delta)^{-1}\omega^{n+1},
		\qquad
		{\bf u}^{n+1}:=\nabla^\perp \psi^{n+1}.
		\]
		Then
		\[
		\big\langle ({\bf u}^{n+1}\cdot \nabla)\omega^n,\psi^{n+1}\big\rangle_{\mathbb{L}^2(\T^2)}=0.
		\]
		If, in addition, $\psi^{n+1}\in\mathbb V_N$, then
		\[
		\big\langle \mathcal P_N[({\bf u}^{n+1}\cdot \nabla)\omega^n],\psi^{n+1}\big\rangle_{\mathbb{L}^2(\T^2)}=0.
		\]
	\end{lemma}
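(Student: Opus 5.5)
The plan is to move the derivative off \(\omega^n\) and onto \(\psi^{n+1}\), and then to use that a velocity field is orthogonal to the gradient of its own stream function. Since \(\mathbf u^{n+1}=\nabla^\perp\psi^{n+1}\), we have \(\operatorname{div}\mathbf u^{n+1}=0\). On the torus there are no boundary terms, so for smooth (e.g. trigonometric-polynomial) fields the integration by parts reads
\[
\big\langle (\mathbf u^{n+1}\cdot\nabla)\omega^n,\psi^{n+1}\big\rangle_{\mathbb L^2(\T^2)}
=-\big\langle \omega^n,\operatorname{div}(\mathbf u^{n+1}\psi^{n+1})\big\rangle_{\mathbb L^2(\T^2)}
=-\big\langle \omega^n,\mathbf u^{n+1}\cdot\nabla\psi^{n+1}\big\rangle_{\mathbb L^2(\T^2)} .
\]
Pointwise, \(\mathbf u^{n+1}\cdot\nabla\psi^{n+1}=\nabla^\perp\psi^{n+1}\cdot\nabla\psi^{n+1}=0\), because \(\mathbf a^\perp\cdot\mathbf a=0\) for every \(\mathbf a\in\mathbb R^2\). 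This gives the first identity. It uses the mean-zero assumption on \(\omega^{n+1}\) only to make \(\psi^{n+1}=(-\Delta)^{-1}\omega^{n+1}\) well defined.

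For the projected statement, I use that \(\mathcal P_N\) is the orthogonal projection in \(\mathbb L^2(\T^2)\), i.e. a real Fourier multiplier that is self-adjoint. Hence
\[
\langle \mathcal P_N f,\psi^{n+1}\rangle=\langle f,\mathcal P_N\psi^{n+1}\rangle .
\]
The hypothesis \(\psi^{n+1}\in\mathbb V_N\) gives \(\mathcal P_N\psi^{n+1}=\psi^{n+1}\), so the projected pairing reduces to the unprojected one and vanishes by the first part. In the scheme \eqref{eq:scheme_keep}, \(\omega_N^{n+1}\in\mathbb V_N\) and \((-\Delta)^{-1}\) is a Fourier multiplier preserving \(\mathbb V_N\), so this hypothesis is automatically satisfied there.

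The only point needing care is regularity, since the lemma allows "any scalar field" \(\omega^n\). I would state the argument for trigonometric polynomials, or more generally whenever \(\omega^n\in\mathbb L^2(\T^2)\) and \(\psi^{n+1}\) is smooth enough (e.g. \(\psi^{n+1}\in\mathbb H^{2+\epsilon}\), so that \(\mathbf u^{n+1}\) is bounded and Lipschitz). In the general case the pairing is interpreted distributionally as \(-\langle\omega^n,\mathbf u^{n+1}\cdot\nabla\psi^{n+1}\rangle\), and the same pointwise orthogonality applies, possibly after a density argument. In the application all fields lie in \(\mathbb V_N\), so every step is a finite Fourier computation and no obstacle arises.
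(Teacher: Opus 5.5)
Your proof is correct and is essentially the paper's argument. The paper runs the same integration by parts in the opposite direction: it starts from the pointwise identity $\nabla\psi^{n+1}\cdot\mathbf u^{n+1}=0$, multiplies by a test function $\phi$, integrates by parts using $\operatorname{div}\mathbf u^{n+1}=0$, and then sets $\phi=\omega^n$. It treats the projected case exactly as you do, via self-adjointness of $\mathcal P_N$ and $\mathcal P_N\psi^{n+1}=\psi^{n+1}$; your remark on regularity is a reasonable point that the paper leaves implicit.
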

	
	\begin{proof}
		Since \({\bf u}^{n+1}=\nabla^\perp\psi^{n+1}\), we have pointwise
		\[
		\nabla\psi^{n+1}\cdot {\bf u}^{n+1}
		=
		\nabla\psi^{n+1}\cdot \nabla^\perp\psi^{n+1}
		=0.
		\]
		Hence, for any scalar test function \(\phi\),
		\[
		0=\int_{\T^2} (\nabla\psi^{n+1}\cdot {\bf u}^{n+1})\,\phi\,\dd {\bf x}.
		\]
		Using periodic integration by parts, we obtain
		\[
		0
		=
		-\int_{\T^2}\psi^{n+1}\,\nabla\cdot({\bf u}^{n+1}\phi)\,\dd {\bf x}.
		\]
		Because \({\bf u}^{n+1}\) is divergence-free, that is,
		\[
		\nabla\cdot {\bf u}^{n+1}=\nabla\cdot\nabla^\perp\psi^{n+1}=0,
		\]
		the divergence expands as
		\[
		\nabla\cdot({\bf u}^{n+1}\phi)={\bf u}^{n+1}\cdot\nabla\phi.
		\]
		Therefore
		\[
		0
		=
		-\int_{\T^2}\psi^{n+1}({\bf u}^{n+1}\cdot\nabla\phi)\,\dd {\bf x}.
		\]
		By choosing \(\phi=\omega^n\) gives
		\[
		\big\langle ({\bf u}^{n+1}\cdot\nabla)\omega^n,\psi^{n+1}\big\rangle_{\mathbb{L}^2(\T^2)}=0.
		\]
		If moreover \(\psi^{n+1}\in\mathbb V_N\), then \(\mathcal P_N\) is self-adjoint on
		\(\mathbb{L}^2(\T^2)\) and \(\mathcal P_N\psi^{n+1}=\psi^{n+1}\). Hence
		\begin{align*}
			\big\langle \mathcal P_N[({\bf u}^{n+1}\cdot\nabla)\omega^n],\psi^{n+1}\big\rangle_{\mathbb{L}^2(\T^2)}&
			=
			\big\langle ({\bf u}^{n+1}\cdot\nabla)\omega^n,\mathcal P_N\psi^{n+1}\big\rangle_{\mathbb{L}^2(\T^2)}\\
			&=
			\big\langle ({\bf u}^{n+1}\cdot\nabla)\omega^n,\psi^{n+1}\big\rangle_{\mathbb{L}^2(\T^2)}
			=0.
		\end{align*}
		This proves the result.
	\end{proof}
	The proof of the following result rests on the coercivity estimate~\eqref{eq:finite_dimensional_coercivity} in Theorem~\ref{prop:finite_dimensional_coercivity}.
	\begin{theorem}[Uniform \(\mathbb{H}^{-1}(\T^2)\) estimate for \eqref{eq:scheme_keep}]
		\label{thm:energy_keep_full}
		Fix \(T>0\), and \(0<\alpha<1\). Let \(N_T=\lfloor T/\Dt\rfloor\). Let \(N,K\in\mathbb N\) satisfy $K\ge \gamma\sqrt2\,N$, with $\gamma>1+\sqrt{2}$.
		Then there exists a constant \(C\ge1\), depending only on
		\((\alpha, T)\), such that 
		\begin{equation}\label{eq:uniform_energy_keep}
			\begin{aligned}
				&\max_{0\le n\le N_T}
				\bigg(
				\mathbb E\Big[
				\|\omega_N^n\|_{\mathbb{H}^{-1}(\T^2)}^2
				\Big]
				+
				\Dt\,c_K\,
				\mathbb E\Big[
				\|\omega_N^n\|_{\mathbb{L}^2(\T^2)}^2
				\Big]
				\bigg)+
				c_0
				\sum_{n=0}^{N_T-1}
				\Dt\,
				\mathbb E\Big[
				\|\omega_N^n\|_{\mathbb{H}^{-\alpha}(\T^2)}^2
				\Big]
				\\
				&\le
				C
				\bigg(
				\|\omega_N^0\|_{\mathbb{H}^{-1}(\T^2)}^2
				+
				\Dt\,c_K\,
				\|\omega_N^0\|_{\mathbb{L}^2(\T^2)}^2
				\bigg).
			\end{aligned}
		\end{equation}
	\end{theorem}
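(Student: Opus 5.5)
We test the scheme \eqref{eq:scheme_keep} with \(\psi^{n+1}:=(-\Delta)^{-1}\omega_N^{n+1}\in\mathbb V_N\) and use the identity \(2\langle a-b,a\rangle=\|a\|^2-\|b\|^2+\|a-b\|^2\) in the \(\mathbb H^{-1}\) inner product. By Lemma~\ref{lem:conv_cancel_keep} the mixed-time convection term drops out exactly. The implicit Laplacian contributes \(-\Dt\,c_K\|\omega_N^{n+1}\|_{\mathbb L^2}^2\) on the right-hand side. We obtain
\[
\|\omega_N^{n+1}\|_{\mathbb H^{-1}}^2-\|\omega_N^{n}\|_{\mathbb H^{-1}}^2+\|\omega_N^{n+1}-\omega_N^n\|_{\mathbb H^{-1}}^2+\Dt\,c_K\|\omega_N^{n+1}\|_{\mathbb L^2}^2
=-2\big\langle \Xi_n,(-\Delta)^{-1}\omega_N^{n+1}\big\rangle .
\]
We split the noise term as \(\omega_N^{n+1}=\omega_N^n+(\omega_N^{n+1}-\omega_N^n)\). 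The part paired with \(\omega_N^n\) is a martingale increment with zero expectation, since \(\omega_N^n\) is \(\mathcal F_n\)-measurable and the increments \(\Delta_{n+1}{\bf W}_k\) are independent of \(\mathcal F_n\). The remaining part is bounded by Young's inequality:
\[
2\big|\langle\Xi_n,(-\Delta)^{-1}(\omega_N^{n+1}-\omega_N^n)\rangle\big|\le \|\Xi_n\|_{\mathbb H^{-1}}^2+\|\omega_N^{n+1}-\omega_N^n\|_{\mathbb H^{-1}}^2 .
\]
The second term is absorbed by the numerical dissipation on the left. By Itô isometry and independence of the components, \(\mathbb E\|\Xi_n\|_{\mathbb H^{-1}}^2=\Dt\,\mathbb E\,\mathfrak D_{N,K}(\omega_N^n)\).

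The main obstacle is this absorption. Young's inequality with constant one uses up the whole increment term, so the noise contribution must be compensated entirely by \(c_K\|\cdot\|_{\mathbb L^2}^2\), through Theorem~\ref{prop:finite_dimensional_coercivity}. The difficulty is a time-level mismatch: the dissipation sits at level \(n+1\), while \(\mathfrak D_{N,K}\) is evaluated at \(\omega_N^n\). This is exactly why the statement carries the extra quantity \(\Dt\,c_K\|\omega_N^n\|_{\mathbb L^2}^2\).

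We therefore work with the modified energy \(E^n:=\|\omega_N^n\|_{\mathbb H^{-1}}^2+\Dt\,c_K\|\omega_N^n\|_{\mathbb L^2}^2\). Adding and subtracting \(\Dt\,c_K\|\omega_N^n\|_{\mathbb L^2}^2\) and taking expectations gives
\[
\mathbb E\,E^{n+1}\le \mathbb E\,E^n+\Dt\,\mathbb E\big[\mathfrak D_{N,K}(\omega_N^n)-c_K\|\omega_N^n\|_{\mathbb L^2}^2\big].
\]
To close this, we also need the level-\((n+1)\) \(\mathbb L^2\) term on the left with weight \(\Dt\,c_K\) rather than \(2\Dt\,c_K\). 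If the bookkeeping is tight, we fall back on writing the Laplacian contribution as
\[
\tfrac{\Dt c_K}{2}\big(\|\omega_N^{n+1}\|_{\mathbb L^2}^2+\|\omega_N^{n+1}\|_{\mathbb L^2}^2\big)
\]
and telescoping, checking that the \(\tfrac{c_K}{2}\) factor in the scheme produces precisely the weight needed. Applying \eqref{eq:finite_dimensional_coercivity} to the bracket, which is valid under \(K\ge\gamma\sqrt2 N\), gives
\[
\mathbb E\,E^{n+1}+c_0\Dt\,\mathbb E\|\omega_N^n\|_{\mathbb H^{-\alpha}}^2\le \mathbb E\,E^n+C_0\Dt\,\mathbb E\|\omega_N^n\|_{\mathbb H^{-1}}^2\le(1+C_0\Dt)\,\mathbb E\,E^n .
\]

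Finally, we sum over \(n\) and apply the discrete Gronwall lemma, which gives \(\max_n\mathbb E\,E^n\le e^{C_0T}E^0\). Summing the dissipation terms then yields \eqref{eq:uniform_energy_keep} with \(C=C(\alpha,T)\). Well-posedness and measurability of the implicit step are assumed as provided by the scheme setup, and finite second moments follow inductively from finite dimensionality.
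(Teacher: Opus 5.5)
Your proposal is correct and follows the paper's proof essentially step by step. The shared steps are: testing with $(-\Delta)^{-1}\omega_N^{n+1}$, the convection cancellation, splitting the noise pairing and absorbing the increment by Young's inequality, It\^o isometry, the modified energy $\|\omega_N^n\|_{\mathbb H^{-1}}^2+\Dt\,c_K\|\omega_N^n\|_{\mathbb L^2}^2$, the coercivity estimate and discrete Gronwall. Multiplying by $2$ from the outset is only cosmetic, and your hedging about the weights is unnecessary: because the scheme carries $\tfrac{c_K}{2}$, doubling yields exactly $\Dt\,c_K\|\omega_N^{n+1}\|_{\mathbb L^2}^2$, which matches both the modified energy and the coefficient of $c_K\|\omega_N^n\|_{\mathbb L^2}^2$ in \eqref{eq:finite_dimensional_coercivity}, so no fallback is needed.
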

	
	\begin{proof}
		We set
		\[
		\psi_N^n:=(-\Delta)^{-1}\omega_N^n,
		\qquad n=0,\dots,N_T.
		\]
		Since each \(\omega_N^n\) has zero spatial mean, this is well-defined.
		
		\medskip
		\noindent\textbf{Step 1.}
		We test \eqref{eq:scheme_keep} with
		\(\psi_N^{n+1}\). The polarization identity in \(\mathbb{H}^{-1}(\T^2)\) gives
		\begin{align}\label{1111}
			\big\langle \omega_N^{n+1}-\omega_N^n,\psi_N^{n+1}\big\rangle
			=
			\frac12
			\Big(
			\|\omega_N^{n+1}\|_{\mathbb{H}^{-1}(\T^2)}^2
			-
			\|\omega_N^n\|_{\mathbb{H}^{-1}(\T^2)}^2
			+
			\|\omega_N^{n+1}-\omega_N^n\|_{\mathbb{H}^{-1}(\T^2)}^2
			\Big).
		\end{align}
		The convection term vanishes by Lemma~\ref{lem:conv_cancel_keep}, and
		\begin{align}\label{3333}
			\big\langle \Delta\omega_N^{n+1},\psi_N^{n+1}\big\rangle
			=
			-\|\omega_N^{n+1}\|_{\mathbb{L}^2(\T^2)}^2.
		\end{align}
		Therefore
		\begin{equation}\label{eq:energy_id_keep_corrected}
			\begin{aligned}
				&\frac12
				\Big(
				\|\omega_N^{n+1}\|_{\mathbb{H}^{-1}(\T^2)}^2
				-
				\|\omega_N^n\|_{\mathbb{H}^{-1}(\T^2)}^2
				\Big)
				+
				\frac12
				\|\omega_N^{n+1}-\omega_N^n\|_{\mathbb{H}^{-1}(\T^2)}^2
				+
				\Dt\,\frac{c_K}{2}
				\|\omega_N^{n+1}\|_{ \mathbb{L}^2(\T^2)}^2
				\\
				&\qquad
				=
				-
				\sum_{k\in\mathcal I^{K}}
				\Big\langle
				\mathcal{P}_N[(\boldsymbol{\sigma}_k\cdot \nabla)\omega_N^n],
				\psi_N^{n+1}
				\Big\rangle
				\Delta_{n+1} {\bf W}_k .
			\end{aligned}
		\end{equation}
		We define
		\begin{align}\label{today32}
			\Xi_n
			:=
			\sum_{k\in\mathcal I^{K}}
			\mathcal{P}_N[(\boldsymbol{\sigma}_k\cdot \nabla)\omega_N^n]\Delta_{n+1} {\bf W}_k .
		\end{align}
		Then the right-hand side of \eqref{eq:energy_id_keep_corrected} is
		\[
		-\langle \Xi_n,\psi_N^{n+1}\rangle .
		\]
		We split
		\begin{align}\label{2222}
			-\langle \Xi_n,\psi_N^{n+1}\rangle
			=
			-\langle \Xi_n,\psi_N^n\rangle
			-
			\langle \Xi_n,\psi_N^{n+1}-\psi_N^n\rangle .
		\end{align}
		Since \(\psi_N^n\) and the coefficients in \(\Xi_n\) are
		\(\mathcal F_n\)-measurable, while
		\(\mathbb E[\Delta_{n+1} {\bf W}_k\mid\mathcal F_n]={\bf 0}\), we have
		\[
		\mathbb E\Big[
		\langle \Xi_n,\psi_N^n\rangle
		\Big]=0.
		\]
		For the second term, using
		\[
		\psi_N^{n+1}-\psi_N^n
		=
		(-\Delta)^{-1}(\omega_N^{n+1}-\omega_N^n),
		\]
		we obtain
		\[
		\big|
		\langle \Xi_n,\psi_N^{n+1}-\psi_N^n\rangle
		\big|
		\le
		\|\Xi_n\|_{\mathbb{H}^{-1}(\T^2)}
		\|\omega_N^{n+1}-\omega_N^n\|_{\mathbb{H}^{-1}(\T^2)}.
		\]
		By Young's inequality, we get
		\begin{equation}\label{eq:young_half_corrected}
			\big|
			\langle \Xi_n,\psi_N^{n+1}-\psi_N^n\rangle
			\big|
			\le
			\frac12
			\|\omega_N^{n+1}-\omega_N^n\|_{\mathbb{H}^{-1}(\T^2)}^2
			+
			\frac12
			\|\Xi_n\|_{\mathbb{H}^{-1}(\T^2)}^2 .
		\end{equation}
		By taking expectations in \eqref{eq:energy_id_keep_corrected}, using the adaptedness
		of the first stochastic term, and applying \eqref{eq:young_half_corrected}, the
		increment term cancels. Hence
		\begin{equation}\label{eq:energy_before_iso_corrected}
			\begin{aligned}
				&\frac12
				\Big(
				\mathbb E\Big[
				\|\omega_N^{n+1}\|_{\mathbb{H}^{-1}(\T^2)}^2
				\Big]
				-
				\mathbb E\Big[
				\|\omega_N^n\|_{\mathbb{H}^{-1}(\T^2)}^2
				\Big]
				\Big)
				+
				\Dt\,\frac{c_K}{2}
				\mathbb E\Big[
				\|\omega_N^{n+1}\|_{ \mathbb{L}^2(\T^2)}^2
				\Big]
				\\
				&\qquad
				\le
				\frac12
				\mathbb E\Big[
				\|\Xi_n\|_{\mathbb{H}^{-1}(\T^2)}^2
				\Big].
			\end{aligned}
		\end{equation}
		
		\medskip
		\noindent\textbf{Step 2.}
		By It\^o isometry and Definition~\ref{def:DLK},
		\begin{align}\label{today33}
			\mathbb E\Big[
			\|\Xi_n\|_{\mathbb{H}^{-1}(\T^2)}^2
			\Big]
			=
			\Dt
			\sum_{k\in\mathcal I^{K}}
			\mathbb{E}\bigg[\big\|
			\mathcal{P}_N[(\boldsymbol{\sigma}_k^{L}\cdot \nabla)\omega_N^n]
			\big\|_{\mathbb{H}^{-1}(\T^2)}^2\bigg]
			\le
			\Dt\,\mathbb{E}\big[\mathfrak D_{N, K}(\omega_N^n)\big].
		\end{align}
		By inserting this into \eqref{eq:energy_before_iso_corrected} gives
		\begin{equation}\label{eq:energy_before_coercivity_corrected}
			\begin{aligned}
				&\frac12
				\Big(
				\mathbb E\Big[
				\|\omega_N^{n+1}\|_{\mathbb{H}^{-1}(\T^2)}^2
				\Big]
				-
				\mathbb E\Big[
				\|\omega_N^n\|_{\mathbb{H}^{-1}(\T^2)}^2
				\Big]
				\Big)
				+
				\Dt\,\frac{c_K}{2}
				\mathbb E\Big[
				\|\omega_N^{n+1}\|_{ \mathbb{L}^2(\T^2)}^2
				\Big]
				\\
				&\qquad
				\le
				\frac{\Dt}{2}
				\mathbb E\Big[
				\mathfrak D_{N, K}(\omega_N^n)
				\Big].
			\end{aligned}
		\end{equation}
		We subtract
		$
		\Dt\,\frac{c_K}{2}
		\mathbb E\Big[
		\|\omega_N^n\|_{ \mathbb{L}^2(\T^2)}^2
		\Big]
		$
		from both sides. Then
		\begin{equation}\label{eq:energy_before_maurelli_corrected}
			\begin{aligned}
				&\frac12
				\Big(
				\mathbb E\Big[
				\|\omega_N^{n+1}\|_{\mathbb{H}^{-1}(\T^2)}^2
				\Big]
				-
				\mathbb E\Big[
				\|\omega_N^n\|_{\mathbb{H}^{-1}(\T^2)}^2
				\Big]
				\Big)
				+
				\Dt\,\frac{c_K}{2}
				\Big(
				\mathbb E\Big[
				\|\omega_N^{n+1}\|_{ \mathbb{L}^2(\T^2)}^2
				\Big]
				-
				\mathbb E\Big[
				\|\omega_N^n\|_{ \mathbb{L}^2(\T^2)}^2
				\Big]
				\Big)
				\\
				&\qquad
				\le
				\Dt\,
				\mathbb E\Big[
				\frac12\mathfrak D_{N, K}(\omega_N^n)
				-
				\frac{c_K}{2}
				\|\omega_N^n\|_{ \mathbb{L}^2(\T^2)}^2
				\Big].
			\end{aligned}
		\end{equation}
		
		\medskip
		\noindent\textbf{Step 3.}
		By coercivity estimate~\eqref{eq:finite_dimensional_coercivity}, we obtain
		\[
		\frac12\mathfrak D_{N, K}(\omega_N^n)
		-
		\frac{c_K}{2}
		\|\omega_N^n\|_{ \mathbb{L}^2(\T^2)}^2
		\le
		-c_0
		\|\omega_N^n\|_{\mathbb{H}^{-\alpha}(\T^2)}^2
		+
		C_0
		\|\omega_N^n\|_{\mathbb{H}^{-1}(\T^2)}^2 .
		\]
		Hence \eqref{eq:energy_before_maurelli_corrected} yields
		\begin{equation}\label{eq:recursion_keep_corrected}
			\begin{aligned}
				&\mathbb E\Big[
				\|\omega_N^{n+1}\|_{\mathbb{H}^{-1}(\T^2)}^2
				\Big]
				+
				\Dt\,c_K
				\mathbb E\Big[
				\|\omega_N^{n+1}\|_{ \mathbb{L}^2(\T^2)}^2
				\Big]
				+
				2c_0\Dt\,
				\mathbb E\Big[
				\|\omega_N^n\|_{\mathbb{H}^{-\alpha}(\T^2)}^2
				\Big]
				\\
				&\qquad
				\le
				(1+2C_0\Dt)
				\mathbb E\Big[
				\|\omega_N^n\|_{\mathbb{H}^{-1}(\T^2)}^2
				\Big]
				+
				\Dt\,c_K
				\mathbb E\Big[
				\|\omega_N^n\|_{ \mathbb{L}^2(\T^2)}^2
				\Big].
			\end{aligned}
		\end{equation}
		
		\medskip
		\noindent\textbf{Step 4: discrete Gronwall.}
		From \eqref{eq:recursion_keep_corrected} we obtain
		\begin{align*}
			&\mathbb E\Big[
			\|\omega_N^{n+1}\|_{\mathbb{H}^{-1}(\T^2)}^2
			\Big]
			+
			\Dt\,c_K
			\mathbb E\Big[
			\|\omega_N^{n+1}\|_{ \mathbb{L}^2(\T^2)}^2
			\Big]\\
			&\le
			(1+C_0\Dt)
			\Bigg(
			\mathbb E\Big[
			\|\omega_N^{n}\|_{\mathbb{H}^{-1}(\T^2)}^2
			\Big]
			+
			\Dt\,c_K
			\mathbb E\Big[
			\|\omega_N^{n}\|_{ \mathbb{L}^2(\T^2)}^2
			\Big]
			\Bigg).
		\end{align*}
		Therefore, by iteration,
		\[
		\mathbb E\Big[
		\|\omega_N^{m}\|_{\mathbb{H}^{-1}(\T^2)}^2
		\Big]
		+
		\Dt\,c_K
		\mathbb E\Big[
		\|\omega_N^{m}\|_{ \mathbb{L}^2(\T^2)}^2
		\Big]
		\le
		(1+C_0\Dt)^m
		\Bigg(
		\|\omega_N^0\|_{\mathbb{H}^{-1}(\T^2)}^2
		+
		\Dt\,c_K\,\|\omega_N^0\|_{ \mathbb{L}^2(\T^2)}^2
		\Bigg)
		\]
		for every \(0\le m\le N_T\). Since \((1+C_0\Dt)^m\le e^{C_0T}\), it follows that
		\[
		\max_{0\le m\le N_T}
		\Bigg(
		\mathbb E\Big[
		\|\omega_N^{m}\|_{\mathbb{H}^{-1}(\T^2)}^2
		\Big]
		+
		\Dt\,c_K
		\mathbb E\Big[
		\|\omega_N^{m}\|_{ \mathbb{L}^2(\T^2)}^2
		\Big]
		\Bigg)
		\le
		C
		\Big(
		\|\omega_N^0\|_{\mathbb{H}^{-1}(\T^2)}^2
		+
		\Dt\,c_K\,\|\omega_N^0\|_{ \mathbb{L}^2(\T^2)}^2
		\Big).
		\]
		Next, summing \eqref{eq:recursion_keep_corrected} from \(n=0\) to \(N_T-1\), we obtain
		\[
		c_0
		\sum_{n=0}^{N_T-1}
		\Dt\,
		\mathbb E\Big[
		\|\omega_N^n\|_{\mathbb{H}^{-\alpha}(\T^2)}^2
		\Big]
		\le
		\|\omega_N^0\|_{\mathbb{H}^{-1}(\T^2)}^2
		+
		\Dt\,c_K\,\|\omega_N^0\|_{ \mathbb{L}^2(\T^2)}^2
		+
		C_0
		\sum_{n=0}^{N_T-1}
		\Dt\,
		\mathbb E\Big[
		\|\omega_N^{n}\|_{\mathbb{H}^{-1}(\T^2)}^2
		\Big].
		\]
		By using the bound already proved for the \(\mathbb{H}^{-1}(\T^2)\)-term, we conclude that
		\[
		c_0
		\sum_{n=0}^{N_T-1}
		\Dt\,
		\mathbb E\Big[
		\|\omega_N^n\|_{\mathbb{H}^{-\alpha}(\T^2)}^2
		\Big]
		\le
		C
		\Big(
		\|\omega_N^0\|_{\mathbb{H}^{-1}(\T^2)}^2
		+
		\Dt\,c_K\,\|\omega_N^0\|_{ \mathbb{L}^2(\T^2)}^2
		\Big).
		\]
		This is exactly \eqref{eq:uniform_energy_keep}.
	\end{proof}

	The following theorem establishes pathwise uniform bounds for the discrete approximations and its proof uses Theorem~\ref{thm:energy_keep_full}. These bounds are a key ingredient in the later stochastic compactness argument, since they provide the control needed to prove tightness of the laws of the approximation components. In particular, they will be used in the proof of Lemma~\ref{lem:stop_high_prob_conv}.

	\begin{theorem}[Uniform strong \(\mathbb{H}^{-1}(\T^2)\) estimate for \eqref{eq:scheme_keep}]
		\label{thm:strong_energy_keep} Let \(N,K\in\mathbb N\) satisfy $K\ge \gamma\sqrt2\,N$, with $\gamma>1+\sqrt{2}$.
		There exists a
		constant \(C>0\), depending only on \((\alpha,T)\), such that
		\begin{equation}\label{eq:strong_energy_keep}
			\begin{aligned}
				&\E\Big[\sup_{0\le n\le N_T}\|\omega_N^n\|_{\mathbb{H}^{-1}(\T^2)}^2\Big]
				+
				\sum_{n=0}^{N_T-1}\Dt\,\E\Big[\|\omega_N^n\|_{\mathbb{H}^{-\alpha}(\T^2)}^2\Big]\\&
				\le
				C\bigg(
				\|\omega_N^0\|_{\mathbb{H}^{-1}(\T^2)}^2
				+
				\Dt\,c_K\,\E\Big[\|\omega_N^0\|_{\mathbb{L}^2(\T^2)}^2\Big]
				\bigg).
			\end{aligned}
		\end{equation}
	\end{theorem}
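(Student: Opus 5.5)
The plan is to work with the pathwise energy identity \eqref{eq:energy_id_keep_corrected} rather than its expectation. Summing it from $n=0$ to $m-1$ and applying the splitting \eqref{2222} together with Young's inequality \eqref{eq:young_half_corrected} gives, for every $1\le m\le N_T$,
\[
\tfrac12\|\omega_N^m\|_{\mathbb{H}^{-1}}^2+\Dt\tfrac{c_K}{2}\sum_{n=1}^{m}\|\omega_N^n\|_{\mathbb{L}^2}^2
\le \tfrac12\|\omega_N^0\|_{\mathbb{H}^{-1}}^2+\tfrac12\sum_{n=0}^{m-1}\|\Xi_n\|_{\mathbb{H}^{-1}}^2-\sum_{n=0}^{m-1}\langle\Xi_n,\psi_N^n\rangle .
\]
The $\mathbb{L}^2$-term on the left is nonnegative, so we may simply discard it. We then take $\sup_{m}$ and expectations.

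\textbf{The quadratic-variation sum.} By \eqref{today33}, $\E\sum_{n}\|\Xi_n\|_{\mathbb{H}^{-1}}^2\le \Dt\sum_n\E[\mathfrak D_{N,K}(\omega_N^n)]$. The coercivity estimate \eqref{eq:finite_dimensional_coercivity} bounds $\mathfrak D_{N,K}(\omega)\le c_K\|\omega\|_{\mathbb{L}^2}^2+C_0\|\omega\|_{\mathbb{H}^{-1}}^2$. The term $\Dt\,c_K\sum_n\E\|\omega_N^n\|_{\mathbb{L}^2}^2$ is already controlled by the telescoped expected inequality \eqref{eq:energy_before_iso_corrected}. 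Indeed, summing \eqref{eq:energy_before_maurelli_corrected} over $n$ and using coercivity bounds this sum by the initial data plus $C\sum\Dt\,\E\|\omega_N^n\|_{\mathbb{H}^{-1}}^2$. By Theorem~\ref{thm:energy_keep_full}, the whole term is therefore bounded by $C(\|\omega_N^0\|_{\mathbb{H}^{-1}}^2+\Dt c_K\|\omega_N^0\|_{\mathbb{L}^2}^2)$.

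A cleaner alternative is to avoid $\sum c_K\|\omega\|_{\mathbb{L}^2}^2$ altogether. We keep the left-hand $\mathbb{L}^2$-sum and shift its index, exactly as in Step 2 of the previous proof. Then the bound becomes $\Dt\sum(\mathfrak D-c_K\|\cdot\|_{\mathbb{L}^2}^2)$ plus the $n=0$ boundary term $\Dt c_K\|\omega_N^0\|^2_{\mathbb{L}^2}$, and coercivity handles this directly. This is the route I would take, since it is pathwise up to the martingale term.

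\textbf{The martingale term.} $M_m:=\sum_{n<m}\langle\Xi_n,\psi_N^n\rangle$ is a discrete martingale with respect to $(\mathcal F_m)$, because $\psi_N^n$ and the coefficients of $\Xi_n$ are $\mathcal F_n$-measurable. We apply the Burkholder--Davis--Gundy inequality (discrete, or via the continuous-time stochastic integral with piecewise-constant integrand) to get
\[
\E\sup_m|M_m|\le C\,\E\Big(\Dt\sum_n\sum_{k}\langle\mathcal P_N[(\boldsymbol\sigma_k\cdot\nabla)\omega_N^n],\psi_N^n\rangle^2\Big)^{1/2}.
\]
Cauchy--Schwarz gives $\langle f,\psi\rangle^2\le\|f\|_{\mathbb{H}^{-1}}^2\|\omega_N^n\|_{\mathbb{H}^{-1}}^2$, so this is at most
\[
C\,\E\Big(\sup_n\|\omega_N^n\|_{\mathbb{H}^{-1}}^2\cdot\Dt\sum_n\mathfrak D_{N,K}(\omega_N^n)\Big)^{1/2}
\le \tfrac14\E\sup_n\|\omega_N^n\|_{\mathbb{H}^{-1}}^2+C\,\Dt\sum_n\E\,\mathfrak D_{N,K}(\omega_N^n).
\]
This is the main obstacle. $\mathfrak D_{N,K}$ alone carries the large factor $c_K\|\omega\|_{\mathbb{L}^2}^2$, so it is not directly bounded in $\mathbb{H}^{-\alpha}$. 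We control it in expectation by the summed form of \eqref{eq:energy_before_iso_corrected}: $\Dt\,c_K\sum_{n=1}^{N_T}\E\|\omega_N^n\|_{\mathbb{L}^2}^2\le \|\omega_N^0\|_{\mathbb{H}^{-1}}^2+\Dt\sum_{n}\E\,\mathfrak D_{N,K}(\omega_N^n)$. Combined with coercivity this gives $\Dt\sum_n\E[\mathfrak D_{N,K}(\omega_N^n)]\le \Dt c_K\|\omega_N^0\|_{\mathbb{L}^2}^2+\Dt\sum_{n\ge1}\E[c_K\|\omega_N^n\|^2_{\mathbb{L}^2}]$. Here I need to check that the recursion closes. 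Using the $(1+C_0\Dt)$ structure and Theorem~\ref{thm:energy_keep_full}, I expect $\Dt c_K\sum_n\E\|\omega_N^n\|_{\mathbb{L}^2}^2$ to be bounded by the right-hand side of \eqref{eq:strong_energy_keep}. The reason is that the $\mathfrak D$ and $c_K$ contributions cancel up to $-c_0\|\cdot\|_{\mathbb{H}^{-\alpha}}^2+C_0\|\cdot\|_{\mathbb{H}^{-1}}^2$, leaving only the initial boundary term.

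Finally, we absorb $\tfrac14\E\sup\|\omega_N^n\|_{\mathbb{H}^{-1}}^2$ into the left-hand side. This is legitimate because the quantity is finite: $\mathbb V_N$ is finite-dimensional, there are finitely many steps, and the moments are finite. The $\mathbb{H}^{-\alpha}$ sum is taken directly from Theorem~\ref{thm:energy_keep_full}. Adding the two yields \eqref{eq:strong_energy_keep}.
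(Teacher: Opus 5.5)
Your argument breaks down at the point you yourself call the ``main obstacle''. You bound the martingale coefficient by Cauchy--Schwarz, $|\langle \PN[(\boldsymbol\sigma_k\cdot\nabla)\omega_N^n],\psi_N^n\rangle|\le\|\PN[(\boldsymbol\sigma_k\cdot\nabla)\omega_N^n]\|_{\mathbb H^{-1}}\|\omega_N^n\|_{\mathbb H^{-1}}$. This leaves you with $\Dt\sum_n\E[\mathfrak D_{N,K}(\omega_N^n)]$, which is \emph{not} bounded by the right-hand side of \eqref{eq:strong_energy_keep}.

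The coercivity estimate \eqref{eq:finite_dimensional_coercivity} controls only the difference $\mathfrak D_{N,K}(\omega)-c_K\|\omega\|_{\mathbb L^2}^2$. When you sum \eqref{eq:energy_before_maurelli_corrected}, the $\mathbb L^2$-terms telescope. You therefore obtain the terminal value $\Dt\,c_K\E\|\omega_N^m\|_{\mathbb L^2}^2$ (as in Theorem~\ref{thm:energy_keep_full}), but never the sum $\Dt\,c_K\sum_n\E\|\omega_N^n\|_{\mathbb L^2}^2$. That sum is a discrete version of $c_K\int_0^T\|\omega\|_{\mathbb L^2}^2\,\dd t$, which is exactly what is unavailable for $\mathbb H^{-1}$ data.

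Your ``recursion'' bounds each of these two sums by the other, so it is circular. Inverse inequalities only give $\Dt\sum_n\mathfrak D_{N,K}(\omega_N^n)\lesssim N^{2\alpha}\Dt\sum_n\|\omega_N^n\|_{\mathbb H^{-\alpha}}^2$, which is not uniform in $N$. The same objection rules out your first treatment of the quadratic-variation sum.

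The missing ingredient is the structural bound of Lemma~\ref{lem:discrete_martingale_coeff_keep}. Because the test function is the stream function $\psi=(-\Delta)^{-1}\omega$, the term in which $\boldsymbol\sigma_k$ is not differentiated cancels, leaving $\langle(\boldsymbol\sigma_k\cdot\nabla)\omega,\psi\rangle=\langle\mathbf u,(D\boldsymbol\sigma_k)^\top\mathbf u\rangle$ with $\mathbf u=\nabla^\perp\psi$. Summing over $k$ through the Kraichnan covariance, whose differentiated symbol satisfies $|\mathbf n|^2q(\mathbf n)\lesssim\langle\mathbf n\rangle^{-2\alpha}$, gives $\sum_k|\langle\PN[(\boldsymbol\sigma_k\cdot\nabla)\omega],\psi\rangle|^2\le C\|\mathbf u\otimes\mathbf u\|_{\mathbb H^{-\alpha}}^2\le C\|\omega\|_{\mathbb H^{-1}}^2\|\omega\|_{\mathbb H^{-\alpha}}^2$.

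Inserted into your BDG step, this yields $\E\sup_m|M_m|\le\frac14\E\sup_n\|\omega_N^n\|_{\mathbb H^{-1}}^2+C\Dt\sum_n\E\|\omega_N^n\|_{\mathbb H^{-\alpha}}^2$, and the last sum is controlled by Theorem~\ref{thm:energy_keep_full}. Your use of BDG with exponent one (via the continuous-time embedding), rather than Doob's $\mathbb L^2$ inequality, is the right choice. It keeps the square root inside the expectation, which is what makes the Young splitting legitimate.

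Your second treatment of the quadratic-variation sum (keep the left $\mathbb L^2$-sum and shift the index) is the route of the paper's Step~2. It is not, however, ``pathwise up to the martingale term''. First, $\|\Xi_n\|_{\mathbb H^{-1}}^2$ equals $\Dt\,\mathfrak D_{N,K}(\omega_N^n)$ only in conditional expectation. Second, $\frac12\sum_{n<m}\|\Xi_n\|_{\mathbb H^{-1}}^2-\frac{\Dt\,c_K}{2}\sum_{n=1}^{m}\|\omega_N^n\|_{\mathbb L^2}^2$ is not monotone in $m$. So you may not take $\sup_m$ first and then replace $\|\Xi_n\|_{\mathbb H^{-1}}^2$ by its conditional mean. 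To exploit the cancellation under the supremum, you must also split off the martingale $\sum_{n<m}\bigl(\|\Xi_n\|_{\mathbb H^{-1}}^2-\Dt\,\mathfrak D_{N,K}(\omega_N^n)\bigr)$ and control its maximum. Estimating it crudely by the full sum $\sum_n\|\Xi_n\|_{\mathbb H^{-1}}^2$ brings back the uncontrolled quantity $\Dt\sum_n\E[\mathfrak D_{N,K}(\omega_N^n)]$.
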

	
	\begin{proof}
		For each \(n=0,\dots,N_T\), let
		\[
		\psi_N^n:=(-\Delta)^{-1}\omega_N^n,
		\]
		so that
		\[
		\|\omega_N^n\|_{\mathbb{H}^{-1}(\T^2)}^2
		=
		\langle \omega_N^n,\psi_N^n\rangle .
		\]
		We also  recall the stochastic increment from \eqref{today32}.

		\medskip
		\noindent\textbf{Step 1: the basic pathwise energy inequality.}
		We follow Step~1 of the proof for Theorem~\ref{thm:energy_keep_full} to obtain
		\begin{equation}\label{eq:strong_energy_clean_1}
			\begin{aligned}
				&\frac12\Big(
				\|\omega_N^{n+1}\|_{\mathbb{H}^{-1}(\T^2)}^2-\|\omega_N^n\|_{\mathbb{H}^{-1}(\T^2)}^2
				\Big)
				+\frac12\|\omega_N^{n+1}-\omega_N^n\|_{\mathbb{H}^{-1}(\T^2)}^2
				+\Dt\,\frac{c_K}{2}\|\omega_N^{n+1}\|_{\mathbb{L}^2(\T^2)}^2
				\\
				&\qquad
				=
				-\langle \Xi_n,\psi_N^{n}\rangle
				-\langle \Xi_n,\psi_N^{n+1}-\psi_N^{n}\rangle.
			\end{aligned}
		\end{equation}
		We define
		\begin{align}\label{today34}
			\delta\mathcal M_{n+1}:=-\langle \Xi_n,\psi_N^n\rangle .
		\end{align}
		Since \(\omega_N^n\), hence \(\psi_N^n\) is 
		\(\mathcal F_n\)-measurable; while \(\Delta_{n+1} {\bf W}_k\) is independent from $\mathcal{F}_n$, we
		have
		\[
		\E\big[\delta\mathcal M_{n+1}\mid \mathcal F_n\big]=0.
		\]
		Thus the partial sums
		\[
		\mathcal M_m:=\sum_{r=0}^{m-1}\delta\mathcal M_{r+1},
		\qquad
		m=0,\dots,N_T,
		\]
		form a discrete $\{\mathcal{F}_{m}\}_{m=0}^{N_T}$-martingale.
		
		\noindent
		For the remainder term, we proceed as in \eqref{eq:young_half_corrected} to get the pathwise
		inequality
		\begin{equation}\label{eq:strong_energy_clean_2}
			\begin{aligned}
				&\frac12\Big(
				\|\omega_N^{n+1}\|_{\mathbb{H}^{-1}(\T^2)}^2-\|\omega_N^n\|_{\mathbb{H}^{-1}(\T^2)}^2
				\Big)
				+\Dt\,\frac{c_K}{2}\|\omega_N^{n+1}\|_{\mathbb{L}^2(\T^2)}^2
				\le
				\delta\mathcal M_{n+1}
				+
				\frac12\|\Xi_n\|_{\mathbb{H}^{-1}(\T^2)}^2 .
			\end{aligned}
		\end{equation}
		
		\noindent
		By summing \eqref{eq:strong_energy_clean_2} from \(n=0\) to \(m-1\), we find
		\[
		\begin{aligned}
			&\frac12\|\omega_N^{m}\|_{\mathbb{H}^{-1}(\T^2)}^2
			+\Dt\,\frac{c_K}{2}\sum_{n=0}^{m-1}\|\omega_N^{n+1}\|_{\mathbb{L}^2(\T^2)}^2
			\le
			\frac12\|\omega_N^{0}\|_{\mathbb{H}^{-1}(\T^2)}^2
			+\mathcal M_m
			+\frac12\sum_{n=0}^{m-1}\|\Xi_n\|_{\mathbb{H}^{-1}(\T^2)}^2 .
		\end{aligned}
		\]
		By taking the supremum over \(m=0,\dots,N_T\), we obtain
		\begin{equation}\label{eq:strong_energy_clean_3}
			\begin{aligned}
				&\frac12\sup_{0\le m\le N_T}\|\omega_N^{m}\|_{\mathbb{H}^{-1}(\T^2)}^2
				+\Dt\,\frac{c_K}{2}\sum_{n=0}^{N_T-1}\|\omega_N^{n+1}\|_{\mathbb{L}^2(\T^2)}^2
				\\
				&\qquad
				\le
				\frac12\|\omega_N^{0}\|_{\mathbb{H}^{-1}(\T^2)}^2
				+\sup_{0\le m\le N_T}|\mathcal M_m|
				+\sum_{n=0}^{N_T-1}\|\Xi_n\|_{\mathbb{H}^{-1}(\T^2)}^2 .
			\end{aligned}
		\end{equation}
		
		\medskip
		\noindent\textbf{Step 2: estimate the quadratic remainder.}
		By using~\eqref{today33} and \eqref{eq:finite_dimensional_coercivity}, we obtain
		\[
		\E\big[\|\Xi_n\|_{\mathbb{H}^{-1}(\T^2)}^2\big]
		\le
		\Dt\Big(
		{c_K}\E\big[\|\omega_N^n\|_{\mathbb{L}^2(\T^2)}^2\big]
		-c_0\E\big[\|\omega_N^n\|_{\mathbb{H}^{-\alpha}(\T^2)}^2\big]
		+C_0\E\big[\|\omega_N^n\|_{\mathbb{H}^{-1}(\T^2)}^2\big]
		\Big).
		\]
		By summing in \(n\), and using
		\[
		\Dt c_K\sum_{n=0}^{N_T-1}\E\big[\|\omega_N^n\|_{\mathbb{L}^2(\T^2)}^2\big]
		\le
		\Dt c_K\,\E\big[\|\omega_N^0\|_{\mathbb{L}^2(\T^2)}^2\big]
		+
		\Dt c_K\sum_{n=0}^{N_T-1}\E\big[\|\omega_N^{n+1}\|_{\mathbb{L}^2(\T^2)}^2\big],
		\]
		we get
		\begin{equation}\label{eq:strong_energy_clean_4}
			\begin{aligned}
				\frac12\sum_{n=0}^{N_T-1}\E\big[\|\Xi_n\|_{\mathbb{H}^{-1}(\T^2)}^2\big]
				&\le
				\frac{\Dt c_K}{2}\E\big[\|\omega_N^0\|_{\mathbb{L}^2(\T^2)}^2\big]
				+\frac{\Dt c_K}{2}\sum_{n=0}^{N_T-1}\E\big[\|\omega_N^{n+1}\|_{\mathbb{L}^2(\T^2)}^2\big]
				\\
				&\quad
				-c_0\sum_{n=0}^{N_T-1}\Dt\,\E\big[\|\omega_N^n\|_{\mathbb{H}^{-\alpha}(\T^2)}^2\big]
				+C_0\sum_{n=0}^{N_T-1}\Dt\,\E\big[\|\omega_N^n\|_{\mathbb{H}^{-1}(\T^2)}^2\big] .
			\end{aligned}
		\end{equation}
		
		\noindent
		By taking expectations in \eqref{eq:strong_energy_clean_3} and inserting
		\eqref{eq:strong_energy_clean_4}, the \(\mathbb{L}^2(\T^2)\)-term on the right is absorbed by the
		same term on the left. This gives
		\begin{equation}\label{eq:strong_energy_clean_5}
			\begin{aligned}
				&\frac12\,\E\Big[\sup_{0\le m\le N_T}\|\omega_N^{m}\|_{\mathbb{H}^{-1}(\T^2)}^2\Big]
				+c_0\sum_{n=0}^{N_T-1}\Dt\,\E\big[\|\omega_N^n\|_{\mathbb{H}^{-\alpha}(\T^2)}^2\big]
				\\
				&\qquad
				\le
				\frac12\|\omega_N^0\|_{\mathbb{H}^{-1}(\T^2)}^2
				+\frac{\Dt c_K}{2}\E\big[\|\omega_N^0\|_{\mathbb{L}^2(\T^2)}^2\big]
				+\E\Big[\sup_{0\le m\le N_T}|\mathcal M_m|\Big]
				\\
				&\qquad\quad
				+
				C_0\sum_{n=0}^{N_T-1}\Dt\,\E\big[\|\omega_N^n\|_{\mathbb{H}^{-1}(\T^2)}^2\big] .
			\end{aligned}
		\end{equation}
		
		\medskip
		\noindent\textbf{Step 3: estimate the martingale term.}
		Since \((\mathcal M_m)_{m=0}^{N_T}\) is a discrete $\{\mathcal{F}_{m}\}_{m=0}^{N_T}$-martingale, Doob's \(\mathbb{L}^2(\T^2)\)-inequality gives
		\[
		\E\Big[\sup_{0\le m\le N_T}|\mathcal M_m|\Big]
		\le
		\Big(\E\Big[\sup_{0\le m\le N_T}|\mathcal M_m|^2\Big]\Big)^{1/2}
		\le
		2\Big(\E\big[|\mathcal M_{N_T}|^2\big]\Big)^{1/2}.
		\]
		Because martingale increments are orthogonal in $\mathbb{L}^2(\Omega)$,
		\[
		\E\big[|\mathcal M_{N_T}|^2\big]
		=
		\sum_{n=0}^{N_T-1}\E\big[|\delta\mathcal M_{n+1}|^2\big]
		=
		\sum_{n=0}^{N_T-1}\E\Big[\E\big[|\delta\mathcal M_{n+1}|^2\mid\mathcal F_n\big]\Big].
		\]
		Now because of \eqref{today34}, we obtain
		\[
		\E\big[|\delta\mathcal M_{n+1}|^2\big]
		=
		\Dt\sum_{k\in\mathcal I^K}
		\mathbb{E}\bigg[\Big|
		\big\langle \PN[(\boldsymbol{\sigma}_k\cdot \nabla)\omega_N^n],\psi_N^n\big\rangle
		\Big|^2\bigg].
		\]
		By Lemma~\ref{lem:discrete_martingale_coeff_keep}, we have
		\[
		\E\big[|\delta\mathcal M_{n+1}|^2\big]
		\le
		C\Dt\,\mathbb{E}\bigg[\|\omega_N^n\|_{\mathbb{H}^{-1}(\T^2)}^2\,
		\|\omega_N^n\|_{\mathbb{H}^{-\alpha}(\T^2)}^2\bigg].
		\]
		Therefore
		\[
		\begin{aligned}
			\E\Big[\sup_{0\le m\le N_T}|\mathcal M_m|\Big]
			&\le
			C\Bigg(
			\E\Big[
			\sum_{n=0}^{N_T-1}
			\Dt\,\|\omega_N^n\|_{\mathbb{H}^{-1}(\T^2)}^2\,
			\|\omega_N^n\|_{\mathbb{H}^{-\alpha}(\T^2)}^2
			\Big]
			\Bigg)^{1/2}
			\\
			&\le
			C\Bigg(
			\E\Big[
			\sup_{0\le n\le N_T}\|\omega_N^n\|_{\mathbb{H}^{-1}(\T^2)}^2
			\sum_{n=0}^{N_T-1}\Dt\,\|\omega_N^n\|_{\mathbb{H}^{-\alpha}(\T^2)}^2
			\Big]
			\Bigg)^{1/2}.
		\end{aligned}
		\]
		By applying Cauchy--Schwarz and then Young's inequality, we obtain, for every
		\(\varepsilon>0\),
		\[
		\E\Big[\sup_{0\le m\le N_T}|\mathcal M_m|\Big]
		\le
		\varepsilon\,
		\E\Big[\sup_{0\le n\le N_T}\|\omega_N^n\|_{\mathbb{H}^{-1}(\T^2)}^2\Big]
		+
		C_\varepsilon
		\sum_{n=0}^{N_T-1}\Dt\,\E\big[\|\omega_N^n\|_{\mathbb{H}^{-\alpha}(\T^2)}^2\big].
		\]
		By choosing \(\varepsilon=\frac14\) and inserting this bound into
		\eqref{eq:strong_energy_clean_5}, we arrive at
		\begin{equation}\label{eq:strong_energy_clean_6}
			\begin{aligned}
				&\frac14\,\E\Big[\sup_{0\le m\le N_T}\|\omega_N^{m}\|_{\mathbb{H}^{-1}(\T^2)}^2\Big]
				+c_0\sum_{n=0}^{N_T-1}\Dt\,\E\big[\|\omega_N^n\|_{\mathbb{H}^{-\alpha}(\T^2)}^2\big]
				\\
				&\qquad
				\le
				\frac12\|\omega_N^0\|_{\mathbb{H}^{-1}(\T^2)}^2
				+\frac{\Dt c_K}{2}\E\big[\|\omega_N^0\|_{\mathbb{L}^2(\T^2)}^2\big]
				\\&\qquad+
				C\sum_{n=0}^{N_T-1}\Dt\,\E\big[\|\omega_N^n\|_{\mathbb{H}^{-\alpha}(\T^2)}^2\big]
				+
				C_0\sum_{n=0}^{N_T-1}\Dt\,\E\big[\|\omega_N^n\|_{\mathbb{H}^{-1}(\T^2)}^2\big].
			\end{aligned}
		\end{equation}
		
		\medskip
		\noindent\textbf{Step 4: use the estimate from Theorem~\ref{thm:energy_keep_full}.} By using Theorem~\ref{thm:energy_keep_full} to estimate~\eqref{eq:strong_energy_clean_6}, we obtain
		\begin{align*}
			&\E\Big[\sup_{0\le n\le N_T}\|\omega_N^n\|_{\mathbb{H}^{-1}(\T^2)}^2\Big]
			+
			\sum_{n=0}^{N_T-1}\Dt\,\E\big[\|\omega_N^n\|_{\mathbb{H}^{-\alpha}(\T^2)}^2\big]\\
			&\le
			C\Big(
			\|\omega_N^0\|_{\mathbb{H}^{-1}(\T^2)}^2
			+
			\Dt c_K\,\E\big[\|\omega_N^0\|_{\mathbb{L}^2(\T^2)}^2\big]
			\Big),
		\end{align*}
		which is exactly \eqref{eq:strong_energy_keep}.
	\end{proof}
	\begin{remark}[Need of the implicit--explicit structure]
		\label{rem:necessity_implicit_euler}
		The particular implicit--explicit form of the fully discrete scheme
		\eqref{eq:scheme_keep} is important for the stability argument. We emphasize
		three points.
		
		\noindent
		\textup{(i)} The scheme can be tested with the unknown quantity
		\((-\Delta)^{-1}\omega_N^{n+1}\). This gives the exact discrete identity
		\eqref{1111}. The last term
		\[
		\frac12\|\omega_N^{n+1}-\omega_N^n\|_{\mathbb H^{-1}}^2
		\]
		in \eqref{1111} is crucial. In the stochastic part of the estimate, after adding
		and subtracting the old-time test function in \eqref{2222}, Young's inequality
		in \eqref{eq:young_half_corrected} produces exactly a term with the factor
		\[
		\frac12\|\omega_N^{n+1}-\omega_N^n\|_{\mathbb H^{-1}}^2,
		\]
		which can be absorbed by the corresponding positive term in \eqref{1111}. This
		absorption is one of the key advantages of testing at the implicit time level.
		
		\noindent
		\textup{(ii)} The mixed-time convection is compatible with this test. Hence the
		nonlinear transport term does not contribute to the \(\mathbb H^{-1}\)-energy
		balance; see also Remark~\ref{rem:mixed_time_conv}.
		
		\noindent
		\textup{(iii)} The implicit treatment of the It\^o correction gives the favorable
		coercive contribution \eqref{3333}. Thus the energy estimate
		\eqref{eq:uniform_energy_keep} contains the positive time-integrated
		\(\mathbb L^2\)-term generated by the implicit Laplacian. This term is important
		for obtaining the uniform bound on the right-continuous interpolant
		\(\overline\omega_j\) in Lemma~\ref{lem:right_endpoint_bound_conv}. Moreover, it
		combines with the quadratic variation of the stochastic transport term through
		the finite-dimensional coercivity estimate
		\eqref{eq:finite_dimensional_coercivity}; see
		\eqref{eq:energy_before_maurelli_corrected}.
		
		Together, these three features provide the stability mechanism behind
		Theorem~\ref{thm:strong_energy_keep}. They are tied to the specific
		implicit--explicit structure of \eqref{eq:scheme_keep} and are not available in
		the same form for a fully explicit discretization, or for a different time
		discretization which does not allow testing at the new time level while
		preserving the mixed-time convection cancellation and the implicit Laplacian
		coercivity.
	\end{remark}
	
	\section{Convergence of the fully discrete scheme to a weak martingale solution}
	\label{sec:convergence_martingale}
	
	In this section we prove convergence, along subsequences, of the fully discrete
	scheme \eqref{eq:scheme_keep} to a weak martingale solution of the limiting
	Euler equations~\eqref{eq:intro_strat} on \(\T^2\).
	
	\noindent
	In this section, we assume
	\[
	0<\alpha<\frac12 .
	\]
	Let as $j\to\infty,$
	\[
	\Dt_j\downarrow0,\qquad N_j\to\infty,\qquad K_j\to\infty.
	\]
	For simplicity we assume \(T/\Dt_j\in\N\) and set
	\[
	N_T^j:=\frac{T}{\Dt_j}.
	\]
	We also assume
	\begin{equation}\label{eq:resolution_condition_conv}
		K_j\ge \gamma \sqrt{2} N_j\qquad\text{for some }\, \gamma>1+\sqrt{2}.
	\end{equation}
	We use the shorthand
	\[
	\mathcal{P}_j:=\mathcal P_{N_j},
	\qquad
	\omega_j^0:=\mathcal{P}_j\omega_0.
	\]
	For general initial data \(\omega_0\in \mathbb{H}^{-1}(\T^2)\), we assume the standard
	well-preparedness condition
	\begin{equation}\label{eq:well_prepared_initial_data_conv}
		\sup_{j\ge1}
		\Big(
		\|\omega_j^0\|_{\mathbb{H}^{-1}(\T^2)}^2
		+
		\Dt_j\,c_{K_j}\|\omega_j^0\|_{\mathbb{L}^2}^2
		\Big)<\infty.
	\end{equation}
	This is automatic if \(\omega_0\in \mathbb{L}^2(\T^2)\).  If \(\omega_0\in \mathbb{H}^{-1}(\T^2)\), it is enough,
	for instance, to assume \(\sup_{j\in\mathbb{N}} \Dt_j N_j^2<\infty\) as a CFL-type condition to address the roughness of initial data $\omega_0$ that we allow in this work; see also the right-hand side of \eqref{eq:uniform_energy_keep}.
	
	\noindent
	\textbf{The limiting equation and the weak martingale formulation:} Recall that
	\[
	q(\boldsymbol{\xi})=\langle\boldsymbol{\xi}\rangle^{-(2+2\alpha)},
	\qquad
	c_\infty:=\sum_{m\in\Z^2\setminus\{{\bf 0}\}}q({\bf m})<\infty.
	\]
	The limiting It\^o equation on \(\T^2\) is
	\begin{equation}\label{eq:limit_Ito_equation_conv}
		\dd\omega
		+
		({\bf u}\cdot \nabla)\omega\,\dd t
		+
		\sum_{k\in\mathcal I}
		(\boldsymbol{\sigma}_k\cdot \nabla)\omega\,\dd{\bf W}_k(t)
		=
		\frac{c_\infty}{2}\Delta\omega\,\dd t,
		\qquad
		{\bf u}=\nabla^\perp(-\Delta)^{-1}\omega.
	\end{equation}
	Before proving convergence, we specify the martingale formulation of
	\eqref{eq:intro_strat}. The limit obtained from the fully discrete
	scheme \eqref{eq:scheme_keep} will be shown to satisfy this formulation.
	\begin{definition}[Weak martingale solution]
		\label{def:weak_martingale_solution_torus_conv}
		Let \(\omega_0\in \mathbb{H}^{-1}(\T^2)\) be mean-zero.  A weak martingale solution of
		\eqref{eq:limit_Ito_equation_conv} is a system
		\[
		\Big(
		\widetilde\Omega,\widetilde{\mathcal F},
		(\widetilde{\mathcal F}_t)_{t\in[0,T]},
		\widetilde{\mathbb P},
		(\widetilde{\bf W}_k)_{k\in\mathcal I},
		\widetilde\omega
		\Big)
		\]
		such that:
		
		\begin{enumerate}[label=\textup{(\roman*)}]
			\item
			\((\widetilde\Omega,\widetilde{\mathcal F},
			(\widetilde{\mathcal F}_t)_{t\in[0,T]},
			\widetilde{\mathbb P})\) is a filtered probability space satisfying the usual
			conditions;
			
			\item
			\((\widetilde{\bf W}_k)_{k\in\mathcal I}\) is a family of independent real
			\((\widetilde{\mathcal F}_t)\)-Brownian motions;
			
			\item
			\(\widetilde\omega\) is \((\widetilde{\mathcal F}_t)\)-adapted, mean-zero, and
			\[
			\widetilde\omega
			\in
			\mathbb{L}^2\big(\widetilde\Omega;\mathbb{L}^\infty([0,T];\mathbb{H}^{-1}(\T^2))\big)
			\cap
			\mathbb{L}^2\big(\widetilde\Omega;\mathbb{L}^2\bigl([0,T];\mathbb{H}^{-\alpha}(\T^2)\bigr)\big),
			\]
			with a continuous version in \(C\bigl([0,T];\mathbb{H}^{-5}(\T^2)\bigr)\);
			
			\item
			for every \(\varphi\in C^\infty(\T^2)\) and every \(t\in[0,T]\),
			\(\widetilde{\mathbb P}\)-a.s.,
			\begin{equation}\label{eq:weak_martingale_formulation_conv}
				\begin{aligned}
					\langle \widetilde\omega(t),\varphi\rangle
					&=
					\langle \omega_0,\varphi\rangle
					+
					\int_0^t
					\langle \widetilde\omega(s),\widetilde{\bf u}(s)\cdot \nabla\varphi\rangle\,\dd s
					+
					\frac{c_\infty}{2}
					\int_0^t
					\langle \widetilde\omega(s),\Delta\varphi\rangle\,\dd s
					\\
					&\quad
					+
					\sum_{k\in\mathcal I}
					\int_0^t
					\langle \widetilde\omega(s)\boldsymbol{\sigma}_k,\nabla\varphi\rangle\,
					\dd \widetilde{\bf W}_k(s),
					\qquad
					\widetilde{\bf u}=\nabla^\perp(-\Delta)^{-1}\widetilde\omega.
				\end{aligned}
			\end{equation}
		\end{enumerate}
	\end{definition}
	We now state the main convergence result of this section. It asserts that,
	after passing to a subsequence and changing the probability space, the fully discrete approximations converge to a weak martingale
	solution of the stochastic Euler equations. 
	\begin{theorem}[Convergence of the fully discrete scheme]
		\label{thm:convergence_fully_discrete_martingale}
		Let \(0<\alpha<1/2\), and let \(\omega_0\in\mathbb H^{-1}(\T^2)\) be mean‐zero.
		Assume that the parameters \((N_j,K_j,\Delta t_j)_{j\in\mathbb{N}}\) satisfy as $j\to \infty$
		\[
		N_j\to\infty,\qquad K_j\to\infty,\qquad \Delta t_j\to0,
		\qquad
		K_j\ge \sqrt{2}\,\gamma N_j,\quad \gamma>1+\sqrt{2},
		\]
		together with the CFL‐type condition $ \sup_{j\in\mathbb N}\Delta t_j N_j^2 < \infty .$ Then, there exist a filtered probability space
		$\bigl(\widetilde\Omega,\widetilde{\mathcal F}, (\widetilde{\mathcal F}_t)_{t\in[0,T]},\, (\widetilde{\bf W}_k)_{k\in\mathcal I},\widetilde{\mathbb P}\bigr),$ a subsequence (not relabeled) of fully-discrete
		approximations $\bigl(\big\{\widetilde{{\omega}}_{N_j}^n,
		\widetilde{\mathbf u}_{N_j}^n\}_{n=0}^{N_{T}^{j}})_{j\in \mathbb{N}}$ that satisfy the scheme
		\eqref{eq:scheme_keep} with parameters \((N_j,K_j,\Delta t_j)\) on a filtered probability space
		$\bigl(\widetilde\Omega,\widetilde{\mathcal F}, (\widetilde{\mathcal F}_t)_{t\in[0,T]},\, (\widetilde{\bf W}_k)_{k\in\mathcal I},\widetilde{\mathbb P}\bigr),$  and a \((\widetilde{\mathcal F}_t)\)-predictable process
		\((\widetilde\omega, \widetilde{\mathbf u})\in \mathbb{L}^2\bigl(\Omega;  \mathbb L^2\bigl([0,T];\mathbb H^{-\alpha}(\T^2)\times \mathbb{L}^2(\T^2;\mathbb R^2)\bigr)\bigr)\) such that
		\[
		\widetilde{\underline{\omega}}_j:=\sum_{n=0}^{N_T^j-1}\mathbf{1}_{[t_n,t_{n+1})}\widetilde{{\omega}}_{N_j}^n
		\longrightarrow
		\widetilde\omega
		\quad
		\widetilde{\mathbb P}-\text{a.s. in }
		\mathbb L^2\bigl([0,T];\mathbb H^{-5}(\T^2)\bigr),
		\]
		\[
		\widetilde{\underline{\omega}}_j
		\rightharpoonup
		\widetilde\omega
		\quad
		\widetilde{\mathbb P}-\text{a.s. in }
		\mathbb L^2\bigl([0,T];\mathbb H^{-\alpha}(\T^2)\bigr),
		\]
		and, 
		\[
		\widetilde{\mathbf u}_j:=\sum_{n=0}^{N_T^j-1}\mathbf{1}_{[t_n,t_{n+1})}\widetilde{{\bf u}}_{N_j}^n
		\longrightarrow
		\widetilde{\mathbf u}
		\quad
		\widetilde{\mathbb P}-\text{a.s. in }
		\mathbb L^2\bigl([0,T];\mathbb L^2(\T^2;\mathbb R^2)\bigr).
		\]
		Moreover, the system
		\[
		\Big(
		\widetilde\Omega,\widetilde{\mathcal F},
		(\widetilde{\mathcal F}_t)_{t\in[0,T]},
		\widetilde{\mathbb P},
		(\widetilde{\bf W}_k)_{k\in\mathcal I},
		\widetilde\omega
		\Big)
		\]
		is a weak martingale solution of the stochastic Euler equations in the sense of
		Definition~\ref{def:weak_martingale_solution_torus_conv}.
	\end{theorem}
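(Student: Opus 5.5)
The proof follows the stochastic compactness route sketched in the introduction: uniform bounds, tightness, Skorokhod--Jakubowski representation, and identification of the limit.

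\textbf{Step 1: uniform bounds.} Under the CFL condition $\sup_j \Delta t_j N_j^2<\infty$ we have $\Delta t_j c_{K_j}\|\omega_j^0\|_{\mathbb L^2}^2\le c_\infty \Delta t_j N_j^2\|\omega_j^0\|_{\mathbb H^{-1}}^2$. Hence \eqref{eq:well_prepared_initial_data_conv} holds, and Theorems~\ref{thm:energy_keep_full} and \ref{thm:strong_energy_keep} give uniform bounds on
\[
\mathbb E\Big[\sup_n\|\omega_j^n\|_{\mathbb H^{-1}}^2\Big],\qquad \mathbb E\Big[\sum_n\Delta t_j\|\omega_j^n\|_{\mathbb H^{-\alpha}}^2\Big],\qquad \mathbb E\Big[\Delta t_j c_{K_j}\|\omega_j^n\|_{\mathbb L^2}^2\Big].
\]
We then introduce the continuous interpolant $\omega_j^\sharp(t)=\omega_j^0+\int_0^tF_j\,\dd s+\mathcal M_j(t)$. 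Here $F_j$ collects $-\mathcal P_j[(\underline{\mathbf u}\cdot\nabla)\omega]$, evaluated with the right/left piecewise-constant interpolants, together with $\tfrac{c_{K_j}}2\Delta\overline\omega_j$. The martingale is $\mathcal M_j(t)=-\sum_{k\in\mathcal I^{K_j}}\int_0^t\mathcal P_j[(\boldsymbol\sigma_k\cdot\nabla)\underline\omega_j]\,\dd\mathbf W_k$.

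The nonlinear drift is only controlled in $\mathbb H^{-5}$, roughly via $\|(\mathbf u\cdot\nabla)\omega\|_{\mathbb H^{-4}}\lesssim\|\mathbf u\omega\|_{\mathbb H^{-3}}\lesssim \|\mathbf u\|_{\mathbb L^2}\|\omega\|_{\mathbb H^{-1}}$ in a crude embedding. The quadratic dependence on $\|\omega\|_{\mathbb H^{-1}}$ is the reason we localize with stopping times: $\tau_j^R:=\inf\{t_n:\|\omega_j^n\|_{\mathbb H^{-1}}>R\}$. By Chebyshev and Theorem~\ref{thm:strong_energy_keep}, $\mathbb P(\tau_j^R<T)\le C/R^2$ uniformly in $j$. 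Before $\tau_j^R$, the drift is bounded in $\mathbb L^2(0,T;\mathbb H^{-5})$. For the martingale, Burkholder--Davis--Gundy together with $\sum_k\|(\boldsymbol\sigma_k\cdot\nabla)\omega\|_{\mathbb H^{-5}}^2\lesssim c_\infty\|\omega\|_{\mathbb H^{-1}}^2$ gives $\mathbb E\|\mathcal M_j(t)-\mathcal M_j(s)\|^p_{\mathbb H^{-5}}\lesssim|t-s|^{p/2}$ up to $\tau_j^R$. Kolmogorov's criterion then yields a localized H\"older bound. Combining this bound with the compact embedding $\mathbb H^{-1}\hookrightarrow\mathbb H^{-5}$ and the arbitrariness of $R$ gives tightness of $\omega_j^\sharp$ in $C([0,T];\mathbb H^{-5})$.

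\textbf{Step 2: closeness of the interpolants.} We show $\omega_j^\sharp-\underline\omega_j\to0$ and $\overline\omega_j-\underline\omega_j\to0$ in probability in $\mathbb L^2(0,T;\mathbb H^{-4})$. On each cell the difference is an increment over a time $\le\Delta t_j$, so the same localized estimates give a bound of order $\Delta t_j^{1/2}$, plus the term $\Delta t_j c_{K_j}\|\Delta\overline\omega_j\|_{\mathbb H^{-4}}$, which is handled by the $\mathbb L^2$ bound. The full vector $(\omega_j^\sharp,\underline\omega_j,\overline\omega_j,(\mathbf W_k))$ is then tight on the path space
\[
C([0,T];\mathbb H^{-5})\times\mathbb L^2(0,T;\mathbb H^{-4})\times(\mathbb L^2(0,T;\mathbb H^{-\alpha}))_{\rm weak}\times\cdots\times C([0,T];\mathbb R^2)^{\mathcal I}.
\]
The $\mathbb L^2\mathbb H^{-\alpha}$ bounds give weak tightness. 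Strong tightness in $\mathbb L^2\mathbb H^{-4}$ (indeed in $\mathbb L^2\mathbb H^{-\beta}$ for any $\beta>\alpha$) follows from an Aubin--Lions-type argument combining $\mathbb L^2\mathbb H^{-\alpha}$ with the time regularity of $\omega_j^\sharp$ and the closeness just established. Since the weak topology is not Polish, we apply the Skorokhod--Jakubowski theorem. This produces new random variables with the same laws, converging a.s. to $(\widetilde\omega,\widetilde\omega,\widetilde\omega,\widetilde{\mathbf W})$. Equality of laws transfers the scheme \eqref{eq:scheme_keep} to the new space, since the iterates are measurable functions of the data and the increments.

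\textbf{Step 3: identification of the limit (main obstacle).} We pass to the limit in the discrete weak formulation tested against $\mathcal P_j\varphi$. The key point is strong convergence of velocities. The interpolation
\[
\|\mathbf u\|_{\mathbb L^2}=\|\omega\|_{\mathbb H^{-1}}\le\|\omega\|_{\mathbb H^{-4}}^{\theta}\|\omega\|_{\mathbb H^{-\alpha}}^{1-\theta}
\]
converts the a.s. strong $\mathbb H^{-4}$ convergence and the bounded $\mathbb L^2\mathbb H^{-\alpha}$ norm into strong $\mathbb L^2(0,T;\mathbb L^2)$ convergence of $\widetilde{\mathbf u}_j$. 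For the same reason, $\underline{\widetilde\omega}_j\to\widetilde\omega$ strongly in $\mathbb L^2\mathbb H^{-1}$.

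The nonlinear term $\langle\omega,\mathbf u\cdot\nabla\varphi\rangle=\langle\mathbf u\otimes\mathbf u,\nabla^2\varphi\rangle$-type expression (via the Delort/Schochet identity) is quadratic in $\mathbf u$. Strong $\mathbb L^2$ convergence of $\mathbf u$ therefore suffices to pass to the limit, and the mixed-time pairing $(\overline{\mathbf u},\underline\omega)$ is harmless by Step 2. I expect this to be the delicate point: for $\alpha<1/2$, $\omega\in\mathbb H^{-\alpha}$ alone does not make $\mathbf u\omega$ integrable. So either this velocity-form rewriting is used, or the bilinear term is estimated using $\mathbf u\in\mathbb H^{1-\alpha}\subset\mathbb L^{p}$ for some $p>2$ paired with $\omega\in\mathbb H^{-\alpha}$; the latter is where $\alpha<1/2$ enters.

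The Itô correction term $\tfrac{c_{K_j}}2\langle\overline\omega_j,\Delta\varphi\rangle$ converges because $c_{K_j}\to c_\infty$. The stochastic integral is identified by the standard martingale characterization. We show that $\widetilde M_j(t)=\langle\omega_j^\sharp(t),\varphi\rangle-\langle\omega_j^0,\varphi\rangle-\int_0^t\langle F_j,\varphi\rangle$ is a martingale whose quadratic variation and cross-variations with $\widetilde W_k$ converge. This uses $\sum_k\langle\omega\boldsymbol\sigma_k,\nabla\varphi\rangle^2\lesssim\|\omega\|_{\mathbb H^{-1}}^2$ and the tail $\sum_{\mathbf m\notin\mathcal M_{K_j}}q(\mathbf m)\to0$. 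We then pass to the limit in expectations of continuous bounded functionals of the past, with uniform integrability supplied by the moment bounds. Adaptedness and the $C([0,T];\mathbb H^{-5})$ version follow from the limits of $\omega_j^\sharp$. The regularity in (iii) follows from lower semicontinuity of the norms under the weak and a.s. convergences.
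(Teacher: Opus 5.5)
Your overall architecture (stopping-time localization, the equation-based interpolant $\omega_j^\sharp$, closeness of $\omega_j^\sharp,\underline\omega_j,\overline\omega_j$, Skorokhod--Jakubowski, limit passage in the discrete weak form) matches the paper's. However, the drift estimate on which your tightness step rests does not hold for this scheme.

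The bound $\|(\mathbf u\cdot\nabla)\omega\|_{\mathbb H^{-4}}\lesssim\|\omega\|_{\mathbb H^{-1}}^2$ is valid only when $\mathbf u=\nabla^\perp(-\Delta)^{-1}\omega$ for the \emph{same} $\omega$. It is the Delort--Schochet cancellation, which appears only after symmetrizing the bilinear form in its two vorticity arguments. The drift of \eqref{eq:scheme_keep} is instead the mixed-time term $\mathcal P_j[(\overline{\mathbf u}_j\cdot\nabla)\underline\omega_j]$, where $\overline{\mathbf u}_j$ is built from $\omega_{N_j}^{n+1}$ and $\underline\omega_j=\omega_{N_j}^n$. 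The non-symmetric map $(\omega_1,\omega_2)\mapsto(\nabla^\perp(-\Delta)^{-1}\omega_2\cdot\nabla)\omega_1$ is not bounded from $\mathbb H^{-1}\times\mathbb H^{-1}$ into any $\mathbb H^{-s}$. Equivalently, your intermediate claim $\|\mathbf u\omega\|_{\mathbb H^{-3}}\lesssim\|\mathbf u\|_{\mathbb L^2}\|\omega\|_{\mathbb H^{-1}}$ is false for unrelated $\mathbf u,\omega$. Splitting off the same-time part does not help: it leaves $(\overline{\mathbf u}_j\cdot\nabla)(\underline\omega_j-\overline\omega_j)$, for which no $\mathbb H^{-1}$-based control is available. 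Consequently, a stopping time defined through $\|\omega_{N_j}^n\|_{\mathbb H^{-1}}$ alone gives no bound on the drift. Your localized H\"older bound, the $\Dt_j^{1/2}$ rate in Step~2, and the tightness of $\omega_j^\sharp$ are therefore not justified as written.

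The missing idea is that the coercivity-generated bound on $\sum_n\Dt_j\|\omega_{N_j}^n\|^2_{\mathbb H^{-\alpha}}$ is already needed for tightness, not only for the limit passage. The paper does the following. It includes this sum in the stopping functional $\mathcal E_j^n$ of \eqref{eq:stopping_time_conv}. It uses $\|\mathcal P_j[(\overline{\mathbf u}_j\cdot\nabla)\underline\omega_j]\|_{\mathbb H^{-4}}\lesssim\|\underline\omega_j\|_{\mathbb H^{-(1-\alpha)}}\|\overline\omega_j\|_{\mathbb H^{-\alpha}}$ and interpolates $\mathbb H^{-(1-\alpha)}$ between $\mathbb H^{-1}$ and $\mathbb H^{-\alpha}$ with $\theta=\alpha/(1-\alpha)$. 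Finally, it controls $\overline\omega_j$, which the stopping condition does not see at the stopping step, in expectation via Lemma~\ref{lem:right_endpoint_bound_conv}. The result is only an $\mathbb L^{q_*}(0,T;\mathbb H^{-4})$ bound with $q_*=2(1-\alpha)$. This is where $\alpha<1/2$ is first needed ($\theta<1$, $q_*>1$), and the resulting H\"older exponent, and the closeness rate $\Dt_j^{1-1/q_*}$, are below $1/2$.

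With this repair the rest of your plan goes through. In Step~3 the paper's route is simpler than the velocity form. Interpolation gives strong convergence of both interpolants in $\mathbb L^2(0,T;\mathbb H^{-\beta})$ for $\alpha<\beta<1/2$. Hence $\overline{\mathbf u}_j\to\widetilde{\mathbf u}$ in $\mathbb L^2(0,T;\mathbb H^{1-\beta})\subset\mathbb L^2(0,T;\mathbb H^{\beta})$, and the mixed-time pairing converges directly by $\mathbb H^{-\beta}$--$\mathbb H^{\beta}$ duality.

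One further point concerns your identification of the quadratic variation. Passing to the limit in $\widetilde{\mathbb E}[\widetilde M_j(t)^2\Psi]$ requires uniform integrability of $\widetilde M_j(t)^2$, i.e.\ $(2+\delta)$-moments of $\sup_n\|\omega_{N_j}^n\|_{\mathbb H^{-1}}$. Theorem~\ref{thm:strong_energy_keep} does not provide these. The paper sidesteps this with the Debussche--Glatt-Holtz--Temam convergence lemma for stochastic integrals, which needs only convergence in probability of the integrands.
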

	\begin{proof}
		The proof is organized over the remaining subsections of this section. We first
		establish the required tightness and compactness properties of the fully
		discrete approximations, then pass to the limit on a new probability space, and
		finally identify the limit as a weak martingale solution in the sense of
		Definition~\ref{def:weak_martingale_solution_torus_conv}.


		\subsection{Discrete interpolants}
		\label{subsec:interpolants_conv}
		
		For each \(j\in\mathbb{N}\), let \((\omega_{N_j}^n)_{n=0}^{N_T^j}\) be the solution of
		\eqref{eq:scheme_keep}. We define the left- and right-endpoint piecewise
		constant interpolants by
		\[
		\underline\omega_j(t):=\omega_{N_j}^n,
		\qquad
		\overline\omega_j(t):=\omega_{N_j}^{n+1},
		\qquad
		t\in[t_j^n,t_j^{n+1}),\quad n=0,\dots,N_T^j-1,
		\]
		with the convention
		\[
		\underline\omega_j(T):=\omega_{N_j}^{N_T^j-1},
		\qquad
		\overline\omega_j(T):=\omega_{N_j}^{N_T^j}.
		\]
		We also set 
		\[
		\overline{\mathbf u}_j(t)
		:=
		\nabla^\perp(-\Delta)^{-1}\overline\omega_j(t)\qquad\forall\,t\in[0,T].
		\]
		We define the piecewise constant drift by
		\begin{equation}\label{eq:drift_j_def_conv}
			F_j(t)
			:=
			-\mathcal P_{j}\big[
			(\overline{\mathbf u}_j(t)\cdot\nabla)\underline\omega_j(t)
			\big]
			+
			\frac{c_{K_j}}{2}\Delta\overline\omega_j(t)\qquad\forall\,t\in[0,T],
		\end{equation}
		and the continuous martingale by
		\begin{equation}\label{eq:martingale_j_def_conv}
			\mathcal M_j(t)
			:=
			-\sum_{k\in\mathcal I^{K_j}}
			\int_0^t
			\mathcal P_{N_j}\big[
			(\boldsymbol{\sigma}_k\cdot\nabla)\underline\omega_j(s)
			\big]\,
			\dd {\bf W}_k^j(s)\qquad\forall\,t\in[0,T].
		\end{equation}
		The continuous equation-based interpolant is then given by
		\begin{equation}\label{eq:omega_sharp_def_conv}
			\omega_j^\sharp(t)
			:=
			\omega_{N_j}^0+\int_0^t F_j(s)\,\dd s+\mathcal M_j(t)\qquad\forall\,t\in[0,T].
		\end{equation}
		By construction,
		\[
		\omega_j^\sharp(t_j^n)=\omega_{N_j}^n,
		\qquad n=0,\dots,N_T^j.
		\]
		For every \(\varphi\in C^\infty(\T^2)\), define the scalar martingale
		\[
		\mathcal M_j^\varphi(t):=\langle \mathcal M_j(t),\varphi\rangle\qquad\forall\,t\in[0,T].
		\]
		By using orthogonality of \(\mathcal P_{N_j}\), the divergence-free property
		of \(\boldsymbol{\sigma}_k\), and integration by parts, we obtain
		\begin{equation}\label{eq:Mjphi_integrand_conv}
			\mathcal M_j^\varphi(t)
			=
			\sum_{k\in\mathcal I^{K_j}}
			\int_0^t
			\left\langle
			\underline\omega_j(s)\boldsymbol{\sigma}_k,
			\nabla(\mathcal P_{j}\varphi)
			\right\rangle
			\dd{\bf W}_k^j(s)\qquad\forall\,t\in[0,T].
		\end{equation}
		
		\noindent
		We record the following consequence of the uniform energy estimate in
		Theorem~\ref{thm:strong_energy_keep}. It provides the
		\(\mathbb L^2(0,T;\mathbb H^{-\alpha})\)-bounds for the left- and
		right-continuous interpolants needed in the compactness argument.
		\begin{lemma}[Right-endpoint \(\mathbb{H}^{-\alpha}\) bound]
			\label{lem:right_endpoint_bound_conv} There exists a constant $C>0$ such that
			\begin{equation}\label{eq:right_endpoint_bound_conv}
				\sup_{j\in\mathbb{N}}\E\left[
				\sum_{n=0}^{N_T^j-1}\Dt_j\|\omega_{N_j}^{n+1}\|_{\mathbb{H}^{-\alpha}(\T^2)}^2
				\right]\le C.
			\end{equation}
			In particular,
			\begin{equation}\label{eq:interpolant_Halpha_bounds_conv}
				\sup_{j\in\mathbb{N}}\E\Big[
				\|\underline\omega_j\|_{\mathbb{L}^2\bigl([0,T];\mathbb{H}^{-\alpha}(\T^2)\bigr)}^2
				+
				\|\overline\omega_j\|_{\mathbb{L}^2\bigl([0,T];\mathbb{H}^{-\alpha}(\T^2)\bigr)}^2
				\Big]<\infty.
			\end{equation}
		\end{lemma}
		
		\begin{proof}
			Fix $j\in\mathbb{N}$. The estimate for the left-endpoint interpolant follows directly from
			Theorem~\ref{thm:strong_energy_keep}:
			\begin{align}\label{new}
				\E\left[
				\sum_{n=0}^{N_T^j-1}\Dt_j
				\|\omega_{N_j}^{n}\|_{\mathbb{H}^{-\alpha}(\T^2)}^2
				\right]
				\le C.
			\end{align}
			We now prove the corresponding right-endpoint estimate. By shifting the summation
			index,
			\[
			\sum_{n=0}^{N_T^j-1}\Dt_j
			\|\omega_{N_j}^{n+1}\|_{\mathbb{H}^{-\alpha}(\T^2)}^2
			=
			\sum_{m=1}^{N_T^j}\Dt_j
			\|\omega_{N_j}^{m}\|_{\mathbb{H}^{-\alpha}(\T^2)}^2.
			\]
			Hence
			\[
			\sum_{m=1}^{N_T^j}\Dt_j
			\|\omega_{N_j}^{m}\|_{\mathbb{H}^{-\alpha}(\T^2)}^2
			\le
			\sum_{m=0}^{N_T^j-1}\Dt_j
			\|\omega_{N_j}^{m}\|_{\mathbb{H}^{-\alpha}(\T^2)}^2
			+
			\Dt_j
			\|\omega_{N_j}^{N_T^j}\|_{\mathbb{H}^{-\alpha}(\T^2)}^2.
			\]
			The first term on the right-hand side is already bounded in expectation by
			\eqref{new}. It remains to control the final endpoint.
			
			\noindent
			By interpolation between \(\mathbb{H}^{-1}(\T^2)\) and \(\mathbb{L}^2(\T^2)\),
			\[
			\|f\|_{\mathbb{H}^{-\alpha}(\T^2)}^2
			\le
			C
			\|f\|_{\mathbb{H}^{-1}(\T^2)}^{2\alpha}
			\|f\|_{ \mathbb{L}^2(\T^2)}^{2(1-\alpha)}.
			\]
			Multiplying by \(\Dt_j\), we get
			\[
			\Dt_j\|f\|_{\mathbb{H}^{-\alpha}(\T^2)}^2
			\le
			C
			\Dt_j^\alpha c_{K_j}^{-(1-\alpha)}
			\big(\|f\|_{\mathbb{H}^{-1}(\T^2)}^2\big)^\alpha
			\big(\Dt_j c_{K_j}\|f\|_{ \mathbb{L}^2(\T^2)}^2\big)^{1-\alpha}.
			\]
			Since \(K_j\ge1\), we have
			\[
			c_{K_j}\ge c_1>0,
			\]
			with \(c_1\) independent of \(j\). Also \(\Dt_j\le1\). Therefore
			\[
			\Dt_j^\alpha c_{K_j}^{-(1-\alpha)}\le C.
			\]
			Young's inequality then yields
			\[
			\Dt_j\|f\|_{\mathbb{H}^{-\alpha}(\T^2)}^2
			\le
			C\left(
			\|f\|_{\mathbb{H}^{-1}(\T^2)}^2
			+
			\Dt_j c_{K_j}\|f\|_{ \mathbb{L}^2(\T^2)}^2
			\right).
			\]
			By applying this with \(f=\omega_{N_j}^{N_T^j}\), taking expectations, and using the
			uniform bound for the second term in the estimate of \eqref{eq:uniform_energy_keep}, we obtain
			\[
			\E\left[
			\Dt_j
			\|\omega_{N_j}^{N_T^j}\|_{\mathbb{H}^{-\alpha}(\T^2)}^2
			\right]
			\le C.
			\]
			Combining this endpoint bound with the left-endpoint estimate gives
			\[
			\E\left[
			\sum_{n=0}^{N_T^j-1}\Dt_j
			\|\omega_{N_j}^{n+1}\|_{\mathbb{H}^{-\alpha}(\T^2)}^2
			\right]
			\le C.
			\]
			This proves \eqref{eq:right_endpoint_bound_conv}.
			
			\noindent
			Finally, by the definitions of the piecewise constant interpolants,
			\[
			\|\underline\omega_j\|_{\mathbb{L}^2\bigl([0,T];\mathbb{H}^{-\alpha}(\T^2)\bigr)}^2
			=
			\sum_{n=0}^{N_T^j-1}\Dt_j
			\|\omega_{N_j}^{n}\|_{\mathbb{H}^{-\alpha}(\T^2)}^2,
			\]
			while
			\[
			\|\overline\omega_j\|_{\mathbb{L}^2\bigl([0,T];\mathbb{H}^{-\alpha}(\T^2)\bigr)}^2
			=
			\sum_{n=0}^{N_T^j-1}\Dt_j
			\|\omega_{N_j}^{n+1}\|_{\mathbb{H}^{-\alpha}(\T^2)}^2.
			\]
			The two estimates above therefore imply
			\eqref{eq:interpolant_Halpha_bounds_conv}.
		\end{proof}
		

		\subsection{Localization and time regularity in \(\mathbb{H}^{-4}(\T^2)\)}
		\label{subsec:time_regularity_conv}
		In this subsection, by following the strategy of \cite[Sec.~5]{CoghiMaurelli2026}, we
		introduce a discrete stopping time. This localization is essential for proving
		tightness of the approximation components in the appropriate path spaces.
		
		\noindent
		For \(n=0,\dots,N_T^j\), we define
		\[
		\mathcal E_j^n
		:=
		\sup_{0\le m\le n}\|\omega_{N_j}^m\|_{\mathbb{H}^{-1}(\T^2)}^2
		+
		\sum_{m=0}^{n}\Dt_j\|\omega_{N_j}^{m}\|_{\mathbb{H}^{-\alpha}(\T^2)}^2.
		\]
		For \(R>0\), we define the stopping time
		\begin{equation}\label{eq:stopping_time_conv}
			\tau_{j,R}
			:=
			\inf\{t_j^{n}:\ 0\le n\le N_T^j,\ \mathcal E_j^n>R\}.
		\end{equation}
		\noindent
		We define the stopped drift and stopped martingale by 
		\begin{equation}\label{eq:stopped_drift_conv}
			F_{j,R}(t):=\mathbf 1_{\{t< \tau_{j,R}\}}\,F_j(t)\qquad\forall\,t\in[0,T],
		\end{equation}
		and
		\begin{equation}\label{eq:stopped_martingale_conv}
			\mathcal M_{j,R}(t)
			:=
			-\sum_{k\in\mathcal I^{K_j}}
			\int_0^t
			\mathbf 1_{\{s<\tau_{j,R}\}}\,
			\mathcal{P}_j[(\boldsymbol{\sigma}_k\cdot \nabla)\underline\omega_j(s)]\,\dd{\bf W}_k(s)\qquad\forall\,t\in[0,T].
		\end{equation}
		Finally, define the stopped continuous interpolant for all $t\in[0,T]$,
		\begin{equation}\label{eq:stopped_interpolant_conv}
			\omega_{j,R}^\sharp(t)
			:=
			\omega_j^0+\int_0^tF_{j,R}(s)\,\dd s+\mathcal M_{j,R}(t).
		\end{equation}
		On the event \(\{\tau_{j,R}=T\}\), we have
		\[
		\omega_{j,R}^\sharp=\omega_j^\sharp
		\qquad\text{on }[0,T].
		\]
		\begin{lemma}[High probability of no stopping]
			\label{lem:stop_high_prob_conv} There exists a constant $C>0$ such that
			for every \(R>0\),
			\[
			\sup_{j\in\mathbb{N}} \mathbb P\bigl[\tau_{j,R}<T\bigr]\le \frac{C}{R}.
			\]
			In particular,
			\[
			\lim_{R\to\infty}\sup_{j\in\mathbb{N}}\mathbb P\bigl[\tau_{j,R}<T\bigr ]=0.
			\]
		\end{lemma}
		
		\begin{proof}
			Since \((\mathcal E_j^n)_{n=0}^{N_T^j}\) is non-decreasing in \(n\), the definition
			of \(\tau_{j,R}\) implies
			\[
			\{\tau_{j,R}<T\}\subset \{\mathcal E_j^{N_T^j}>R\}.
			\]
			Hence, by Markov's inequality,
			\[
			\mathbb P\bigl[\tau_{j,R}<T\bigr]
			\le
			\frac{\mathbb E[\mathcal E_j^{N_T^j}]}{R}.
			\]
			By Theorem~\ref{thm:strong_energy_keep} and Lemma~\ref{lem:right_endpoint_bound_conv},
			we obtain
			\[
			\mathbb E[\mathcal E_j^{N_T^j}]\le C
			\]
			which holds uniformly in \(j\). Therefore
			\[
			\sup_{j\in\mathbb{N}}\mathbb P\bigl[\tau_{j,R}<T\bigr]\le \frac{C}{R}.
			\]
			The last claim follows by letting \(R\to\infty\).
		\end{proof}

		\begin{lemma}[Localized drift bound in expectation]
			\label{lem:localized_drift_bound_conv}
			Let
			\[
			q_*:=2(1-\alpha)>1.
			\]
			For every \(R>0\), there exists \(C_R>0\) such that
			\[
			\sup_{j\in\mathbb{N}}
			\mathbb E\Big[
			\|F_{j,R}\|_{\mathbb{L}^{q_*}\left([0,T];\mathbb{H}^{-4}(\T^2)\right)}^{q_*}
			\Big]
			\le C_R.
			\]
		\end{lemma}
		
		\begin{proof}
			We estimate separately the nonlinear part and the Laplacian part of the stopped
			drift \(F_{j,R}\).
			
			\medskip
			\noindent\textbf{Step 1: the nonlinear part.}
			Fix \(\phi\in \mathbb H^4(\T^2)\) with
			\(\|\phi\|_{\mathbb H^4}=1\). Since \(\mathcal P_j\) is self-adjoint and
			\(\operatorname{div}\overline{\mathbf u}_j=0\), we have
			\[
			\begin{aligned}
				\left\langle
				\mathcal P_j\bigl[(\overline{\mathbf u}_j\cdot\nabla)\underline\omega_j\bigr],
				\phi
				\right\rangle
				&=
				\left\langle
				(\overline{\mathbf u}_j\cdot\nabla)\underline\omega_j,
				\mathcal P_j\phi
				\right\rangle  \\
				&=
				-\left\langle
				\underline\omega_j,
				\overline{\mathbf u}_j\cdot\nabla(\mathcal P_j\phi)
				\right\rangle .
			\end{aligned}
			\]
			Therefore, by duality between
			\(\mathbb H^{-(1-\alpha)}(\T^2)\) and
			\(\mathbb H^{1-\alpha}(\T^2)\),
			\[
			\left|
			\left\langle
			\mathcal P_j\bigl[(\overline{\mathbf u}_j\cdot\nabla)\underline\omega_j\bigr],
			\phi
			\right\rangle
			\right|
			\le
			\|\underline\omega_j\|_{\mathbb H^{-(1-\alpha)}}
			\,
			\|\overline{\mathbf u}_j\cdot\nabla(\mathcal P_j\phi)\|_{\mathbb H^{1-\alpha}} .
			\]
			Since \(\mathcal P_j\) is uniformly bounded on \(\mathbb H^4(\T^2)\), we have
			\[
			\|\mathcal P_j\phi\|_{\mathbb H^4}
			\le
			\|\phi\|_{\mathbb H^4}
			=
			1 .
			\]
			Moreover, in two dimensions,
			\[
			\mathbb H^4(\T^2)\hookrightarrow W^{2,\infty}(\T^2),
			\]
			and hence
			\[
			\|\nabla(\mathcal P_j\phi)\|_{W^{1,\infty}}
			\le C\|\mathcal P_j\phi\|_{\mathbb H^4}
			\le C .
			\]
			Since multiplication by a \(W^{1,\infty}\)-function is bounded on
			\(\mathbb H^{1-\alpha}(\T^2)\), uniformly in \(j\), we obtain
			\[
			\|\overline{\mathbf u}_j\cdot\nabla(\mathcal P_j\phi)\|_{\mathbb H^{1-\alpha}}
			\le
			C\|\overline{\mathbf u}_j\|_{\mathbb H^{1-\alpha}} .
			\]
			Combining the preceding estimates gives
			\[
			\left|
			\left\langle
			\mathcal P_j\bigl[(\overline{\mathbf u}_j\cdot\nabla)\underline\omega_j\bigr],
			\phi
			\right\rangle
			\right|
			\le
			C
			\|\underline\omega_j\|_{\mathbb H^{-(1-\alpha)}}
			\|\overline{\mathbf u}_j\|_{\mathbb H^{1-\alpha}} .
			\]
			Taking the supremum over all \(\phi\in\mathbb H^4(\T^2)\) with
			\(\|\phi\|_{\mathbb H^4}=1\), we conclude that
			\[
			\left\|
			\mathcal P_j\bigl[(\overline{\mathbf u}_j\cdot\nabla)\underline\omega_j\bigr]
			\right\|_{\mathbb H^{-4}}
			\le
			C
			\|\underline\omega_j\|_{\mathbb H^{-(1-\alpha)}}
			\|\overline{\mathbf u}_j\|_{\mathbb H^{1-\alpha}} .
			\]
			By the Biot--Savart estimate,
			\[
			\|\overline {\bf u}_j\|_{\mathbb{H}^{1-\alpha}}
			\le
			C\|\overline\omega_j\|_{\mathbb{H}^{-\alpha}(\T^2)}.
			\]
			Moreover, setting
			\[
			\theta:=\frac{\alpha}{1-\alpha}\in(0,1),
			\]
			we have
			\[
			-(1-\alpha)=(1-\theta)(-1)+\theta(-\alpha),
			\]
			and therefore
			\[
			\|\underline\omega_j\|_{\mathbb{H}^{-(1-\alpha)}}
			\le
			C
			\|\underline\omega_j\|_{\mathbb{H}^{-1}(\T^2)}^{1-\theta}
			\|\underline\omega_j\|_{\mathbb{H}^{-\alpha}(\T^2)}^\theta.
			\]
			Hence
			\[
			\|\mathcal{P}_j[(\overline {\bf u}_j\cdot \nabla)\underline\omega_j]\|_{\mathbb{H}^{-4}(\T^2)}
			\le
			C
			\|\underline\omega_j\|_{\mathbb{H}^{-1}(\T^2)}^{1-\theta}
			\|\underline\omega_j\|_{\mathbb{H}^{-\alpha}(\T^2)}^\theta
			\|\overline\omega_j\|_{\mathbb{H}^{-\alpha}(\T^2)}.
			\]
			Now on the event \(\{t<\tau_{j,R}\}\), the stopping-time definition gives
			\[ 
			\|\underline\omega_j(t)\|_{\mathbb{H}^{-1}(\T^2)}^2\le R.
			\]
			Therefore with $C_R=C\cdot R$,
			\begin{align}\label{new31}
				\mathbf 1_{\{t<\tau_{j,R}\}}
				\|\mathcal{P}_j[(\overline {\bf u}_j\cdot \nabla)\underline\omega_j](t)\|_{\mathbb{H}^{-4}(\T^2)}
				\le
				C_R
				\mathbf 1_{\{t<\tau_{j,R}\}}
				\|\underline\omega_j(t)\|_{\mathbb{H}^{-\alpha}(\T^2)}^\theta
				\|\overline\omega_j(t)\|_{\mathbb{H}^{-\alpha}(\T^2)}.
			\end{align}
			Raise this to the power \(q_*\). Since
			\[
			q_*=\frac{2}{1+\theta}=2(1-\alpha),
			\qquad
			\frac{\theta q_*}{2}+\frac{q_*}{2}=1,
			\]
			H\"older's inequality in time in \eqref{new31} yields
			\[
			\begin{aligned}
				&\int_0^T
				\mathbf 1_{\{t<\tau_{j,R}\}}
				\|\mathcal{P}_j[(\overline {\bf u}_j\cdot \nabla)\underline\omega_j](t)\|_{\mathbb{H}^{-4}(\T^2)}^{q_*}\,\dd t
				\\
				&\qquad\le
				C_R
				\left(
				\int_0^T
				\mathbf 1_{\{t<\tau_{j,R}\}}
				\|\underline\omega_j(t)\|_{\mathbb{H}^{-\alpha}(\T^2)}^2\,\dd t
				\right)^{\theta q_*/2}
				\left(
				\int_0^T
				\mathbf 1_{\{t<\tau_{j,R}\}}
				\|\overline\omega_j(t)\|_{\mathbb{H}^{-\alpha}(\T^2)}^2\,\dd t
				\right)^{q_*/2}.
			\end{aligned}
			\]
			By using~\eqref{eq:stopping_time_conv},
			\[
			\int_0^T
			\mathbf 1_{\{t<\tau_{j,R}\}}
			\|\underline\omega_j(t)\|_{\mathbb{H}^{-\alpha}(\T^2)}^2\,\dd t
			\le R
			\qquad\text{a.s.}
			\]
			Hence, now with $C_R=C\cdot R^2$
			\[
			\begin{aligned}
				&\int_0^T
				\mathbf 1_{\{t<\tau_{j,R}\}}
				\|\mathcal{P}_j[(\overline {\bf u}_j\cdot \nabla)\underline\omega_j](t)\|_{\mathbb{H}^{-4}(\T^2)}^{q_*}\,\dd t
				\\
				&\qquad\le
				C_R
				\left(
				\int_0^T
				\|\overline\omega_j(t)\|_{\mathbb{H}^{-\alpha}(\T^2)}^2\,\dd t
				\right)^{q_*/2}.
			\end{aligned}
			\]
			Taking expectations and using \(q_*/2\le 1\), we obtain by Jensen's inequality
			\[
			\begin{aligned}
				&\mathbb E\Big[
				\|\mathbf 1_{\{t<\tau_{j,R}\}}
				\mathcal{P}_j[(\overline {\bf u}_j\cdot \nabla)\underline\omega_j]\|_{\mathbb{L}^{q_*}\left([0,T];\mathbb{H}^{-4}(\T^2)\right)}^{q_*}
				\Big]
				\\
				&\qquad\le
				C_R
				\mathbb E\left[
				\left(
				\int_0^T
				\|\overline\omega_j(t)\|_{\mathbb{H}^{-\alpha}(\T^2)}^2\,\dd t
				\right)^{q_*/2}
				\right]
				\\
				&\qquad\le
				C_R
				\left(
				\mathbb E
				\int_0^T
				\|\overline\omega_j(t)\|_{\mathbb{H}^{-\alpha}(\T^2)}^2\,\dd t
				\right)^{q_*/2}.
			\end{aligned}
			\]
			By using~\eqref{eq:interpolant_Halpha_bounds_conv}, the right-hand side is bounded
			uniformly in \(j\). Thus
			\[
			\sup_{j\in\mathbb{N}}
			\mathbb E\Big[
			\|\mathbf 1_{\{t<\tau_{j,R}\}}
			\mathcal{P}_j[(\overline {\bf u}_j\cdot \nabla)\underline\omega_j]\|_{\mathbb{L}^{q_*}\left([0,T];\mathbb{H}^{-4}(\T^2)\right)}^{q_*}
			\Big]
			\le C_R.
			\]
			
			\medskip
			\noindent\textbf{Step 2: the Laplacian part.}
			Recall that the Laplacian term in \(F_{j,R}\) is
			\(\mathbf 1_{\{t<\tau_{j,R}\}}\frac{c_{K_j}}{2}\Delta\overline\omega_j\).
			Since \(c_{K_j}\le c_\infty\), it is enough to estimate
			\(\mathbf 1_{\{t<\tau_{j,R}\}}\Delta\overline\omega_j\). We have
			\[
			\|\Delta\overline\omega_j\|_{\mathbb{H}^{-4}(\T^2)}
			=
			\|\overline\omega_j\|_{\mathbb{H}^{-2}}
			\le
			C\|\overline\omega_j\|_{\mathbb{H}^{-\alpha}(\T^2)},
			\]
			because \(-2<-\alpha\) for \(0<\alpha<1\). Therefore
			\[
			\|\mathbf 1_{\{t<\tau_{j,R}\}}\Delta\overline\omega_j\|_{\mathbb{L}^{q_*}\left([0,T];\mathbb{H}^{-4}(\T^2)\right)}^{q_*}
			\le
			C
			\int_0^T
			\|\overline\omega_j(t)\|_{\mathbb{H}^{-\alpha}(\T^2)}^{q_*}\,\dd t.
			\]
			Since \(q_*<2\),H\"older's inequality implies
			\[
			\int_0^T
			\|\overline\omega_j(t)\|_{\mathbb{H}^{-\alpha}(\T^2)}^{q_*}\,\dd t
			\le
			T^{1-q_*/2}
			\left(
			\int_0^T
			\|\overline\omega_j(t)\|_{\mathbb{H}^{-\alpha}(\T^2)}^2\,\dd t
			\right)^{q_*/2}.
			\]
			Taking expectations and using again the uniform energy estimate~\eqref{eq:interpolant_Halpha_bounds_conv}, we obtain
			\[
			\sup_{j\in\mathbb{N}}
			\mathbb E\Big[
			\|\mathbf 1_{\{t<\tau_{j,R}\}}\Delta\overline\omega_j\|_{\mathbb{L}^{q_*}\left([0,T];\mathbb{H}^{-4}(\T^2)\right)}^{q_*}
			\Big]
			\le C_R.
			\]
			
			\medskip
			\noindent\textbf{Step 3: conclusion.}
			By combining the nonlinear and Laplacian estimates, we conclude that
			\[
			\sup_{j\in\mathbb{N}}
			\mathbb E\Big[
			\|F_{j,R}\|_{\mathbb{L}^{q_*}\left([0,T];\mathbb{H}^{-4}(\T^2)\right)}^{q_*}
			\Big]
			\le C_R.
			\]
			This proves the lemma.
		\end{proof}
		
		\begin{lemma}[LocalizedH\"older continuity in time]
			\label{lem:localized_holder_conv}
			Let \(R>0\) and let \(p>2\). Then for every
			\[
			0<\gamma<
			\min\left\{
			1-\frac1{q_*},\,
			\frac12-\frac1p
			\right\},
			\qquad q_*:=2(1-\alpha),
			\]
			there exists a constant \(C_{R,p,\gamma}>0\) such that
			\[
			\sup_{j\in\mathbb{N}}
			\mathbb E\Big[
			\|\omega_{j,R}^\sharp\|_{C^\gamma\left([0,T];\mathbb{H}^{-4}(\T^2)\right)}^{\,q_*}
			\Big]
			\le
			C_{R,p,\gamma}.
			\]
		\end{lemma}
		
		\begin{proof}
			Recall \eqref{eq:omega_sharp_def_conv}.	We estimate separately the drift part and the martingale part of \eqref{eq:omega_sharp_def_conv}.
			
			\medskip
			\noindent\textbf{Step 1: the drift part.}
			Let
			\[
			D_{j,R}(t):=\int_0^tF_{j,R}({r})\,\dd r.
			\]
			For \(0\le s<t\le T\),H\"older's inequality gives
			\[
			\|D_{j,R}(t)-D_{j,R}(s)\|_{\mathbb{H}^{-4}(\T^2)}
			=
			\left\|\int_s^tF_{j,R}({r})\,\dd r\right\|_{\mathbb{H}^{-4}(\T^2)}
			\le
			|t-s|^{1-1/q_*}\|F_{j,R}\|_{\mathbb{L}^{q_*}\left([0,T];\mathbb{H}^{-4}(\T^2)\right)}.
			\]
			Hence
			\[
			[D_{j,R}]_{C^{1-1/q_*}([0,T];\mathbb{H}^{-4}(\T^2))}
			\le
			\|F_{j,R}\|_{\mathbb{L}^{q_*}\left([0,T];\mathbb{H}^{-4}(\T^2)\right)},
			\]
			where $[\cdot]_{C^{1-1/q_*}([0,T];\mathbb{H}^{-4}(\T^2))}$ denotes the usual semi-norm on $C^{1-1/q_*}([0,T];\mathbb{H}^{-4}(\T^2))$.
			Moreover,
			\[
			\|D_{j,R}(t)\|_{\mathbb{H}^{-4}(\T^2)}
			\le
			\int_0^t\|F_{j,R}({r})\|_{\mathbb{H}^{-4}(\T^2)}\,\dd r
			\le
			T^{1-1/q_*}\|F_{j,R}\|_{\mathbb{L}^{q_*}\left([0,T];\mathbb{H}^{-4}(\T^2)\right)},
			\]
			so
			\[
			\|D_{j,R}\|_{C\bigl([0,T];\mathbb{H}^{-4}(\T^2)\bigr)}
			\le
			T^{1-1/q_*}\|F_{j,R}\|_{\mathbb{L}^{q_*}\left([0,T];\mathbb{H}^{-4}(\T^2)\right)}.
			\]
			Therefore
			\[
			\|D_{j,R}\|_{C^{1-1/q_*}\bigl([0,T];\mathbb{H}^{-4}(\T^2)\bigr)}
			\le
			C_T\|F_{j,R}\|_{\mathbb{L}^{q_*}\left([0,T];\mathbb{H}^{-4}(\T^2)\right)}.
			\]
			By Lemma~\ref{lem:localized_drift_bound_conv}, we therefore conclude that
			
			\[
			\mathbb E\Big[
			\|D_{j,R}\|_{C^{1-1/q_*}\bigl([0,T];\mathbb{H}^{-4}(\T^2)\bigr)}^{q_*}
			\Big]
			\le C_{R,T}.
			\]
			Finally, if
			\[
			0<\gamma<1-\frac1{q_*},
			\]
			the continuous embedding
			\[
			C^{1-1/q_*}\bigl([0,T];\mathbb{H}^{-4}(\T^2)\bigr)
			\hookrightarrow
			C^\gamma\bigl([0,T];\mathbb{H}^{-4}(\T^2)\bigr)
			\]
			then implies
			\[
			\mathbb E\Big[
			\|D_{j,R}\|_{C^\gamma\left([0,T];\mathbb{H}^{-4}(\T^2)\right)}^{q_*}
			\Big]
			\le
			C_{R,T,\gamma}.
			\]
			\medskip
			\noindent\textbf{Step 2: the martingale part.}
			Let \(0\le s<t\le T\). By the Burkholder--Davis--Gundy inequality,
			\[
			\E\big[\|\mathcal M_{j,R}(t)-\mathcal M_{j,R}(s)\|_{\mathbb{H}^{-4}(\T^2)}^p\big]
			\le
			C_p
			\E\Bigg[
			\int_s^t
			\sum_{k\in\mathcal I^{K_j}}
			\mathbf 1_{\{r<\tau_{j,R}\}}
			\|\mathcal{P}_j[(\boldsymbol{\sigma}_k\cdot \nabla)\underline\omega_j({r})]\|_{\mathbb{H}^{-4}(\T^2)}^2\,\dd r
			\Bigg]^{p/2}.
			\]
			By using Lemma~\ref{lem:noise_estimate_conv}, we obtain
			\[
			\E\big[\|\mathcal M_{j,R}(t)-\mathcal M_{j,R}(s)\|_{\mathbb{H}^{-4}(\T^2)}^p\big]
			\le
			C_p
			\E\Bigg[
			\int_s^t
			\mathbf 1_{\{r<\tau_{j,R}\}}
			\|\underline\omega_j({r})\|_{\mathbb{H}^{-1}(\T^2)}^2\,\dd r
			\Bigg]^{p/2}.
			\]
			On the set \(\{r<\tau_{j,R}\}\), the stopping-time definition~\eqref{eq:stopping_time_conv} gives
			\[
			\|\underline\omega_j({\bf r})\|_{\mathbb{H}^{-1}(\T^2)}^2\le R.
			\]
			Hence
			\[
			\E\big[\|\mathcal M_{j,R}(t)-\mathcal M_{j,R}(s)\|_{\mathbb{H}^{-4}(\T^2)}^p\big]
			\le
			C_{R,p}|t-s|^{p/2}.
			\]
			Since \(\mathcal M_{j,R}({\bf 0})=0\), Kolmogorov's continuity criterion in the Hilbert
			space \(\mathbb{H}^{-4}(\T^2)\) yields, for every
			\[
			0<\gamma<\frac12-\frac1p,
			\]
			a constant \(C_{R,p,\gamma}\) such that
			\begin{align}\label{new40}
				\E\big[\|\mathcal M_{j,R}\|_{C^\gamma\left([0,T];\mathbb{H}^{-4}(\T^2)\right)}^p\big]\le C_{R,p,\gamma}.
			\end{align}
			
			\medskip
			\medskip
			\noindent\textbf{Step 3: conclusion.}
			Since \(\omega_j^0=\mathcal{P}_j\omega_0\) is deterministic and uniformly bounded in
			\(\mathbb{H}^{-4}(\T^2)\), and
			\[
			\omega_{j,R}^\sharp=\omega_j^0+D_{j,R}+\mathcal M_{j,R},
			\]
			we estimate the \(C^\gamma\bigl([0,T];\mathbb{H}^{-4}(\T^2)\bigr)\)-norm in the exponent \(q_*\). We obtain
			\[
			\|\omega_{j,R}^\sharp\|_{C^\gamma\left([0,T];\mathbb{H}^{-4}(\T^2)\right)}^{q_*}
			\le
			C\Big(
			\|\omega_j^0\|_{\mathbb{H}^{-4}(\T^2)}^{q_*}
			+
			\|D_{j,R}\|_{C^\gamma\left([0,T];\mathbb{H}^{-4}(\T^2)\right)}^{q_*}
			+
			\|\mathcal M_{j,R}\|_{C^\gamma\left([0,T];\mathbb{H}^{-4}(\T^2)\right)}^{q_*}
			\Big).
			\]
			The first term is uniformly bounded in \(j\). The second term is bounded by Step 1, and so is the last term in Step 2, since for \(q_*<p\),H\"older's inequality and \eqref{new40} imply
			\[
			\mathbb E\Big[
			\|\mathcal M_{j,R}\|_{C^\gamma\left([0,T];\mathbb{H}^{-4}(\T^2)\right)}^{q_*}
			\Big]
			\le
			C
			\left(
			\mathbb E\Big[
			\|\mathcal M_{j,R}\|_{C^\gamma\left([0,T];\mathbb{H}^{-4}(\T^2)\right)}^{p}
			\Big]
			\right)^{q_*/p}
			\le
			C_{R,p,\gamma}.
			\]
			Therefore
			\[
			\mathbb E\Big[
			\|\omega_{j,R}^\sharp\|_{C^\gamma\left([0,T];\mathbb{H}^{-4}(\T^2)\right)}^{q_*}
			\Big]
			\le
			C_{R,p,\gamma}
			\]
			for every
			\[
			0<\gamma<
			\min\left\{
			1-\frac1{q_*},\,
			\frac12-\frac1p
			\right\}.
			\]
			This proves the claim.
		\end{proof}
		
		\begin{lemma}[The continuous interpolant is close to the left interpolant]
			\label{lem:sharp_under_close_conv}
			It holds as $j\to \infty$,
			\[
			\omega_j^\sharp-\underline\omega_j\to0
			\qquad\text{in probability in }\mathbb{L}^2\bigl([0,T];\mathbb{H}^{-4}(\T^2)\bigr).
			\]
		\end{lemma}
		
		\begin{proof}
			Fix \(R>0\).  On the event \(\{\tau_{j,R}=T\}\), the stopped and unstopped
			processes coincide on \([0,T]\). Hence, for every \(t\in[t_j^n,t_j^{n+1}]\),
			\begin{align}\label{new42}
				\omega_j^\sharp(t)-\underline\omega_j(t)
				=
				\int_{t_j^n}^{t}F_{j,R}(s)\,\dd s
				+
				\mathcal M_{j,R}(t)-\mathcal M_{j,R}(t_j^n).
			\end{align}
			Therefore
			\[
			\mathbf 1_{\{\tau_{j,R}=T\}}
			\|\omega_j^\sharp-\underline\omega_j\|_{\mathbb{L}^2\left([0,T];\mathbb{H}^{-4}(\T^2)\right)}
			\le
			A_{j,R}+B_{j,R},
			\]
			where
			\[
			A_{j,R}
			:=
			\Big\|
			t\mapsto
			\int_{t_j^{\lfloor t/\Dt_j\rfloor}}^tF_{j,R}(s)\,\dd s
			\Big\|_{\mathbb{L}^2\left([0,T];\mathbb{H}^{-4}(\T^2)\right)},
			\]
			and
			\[
			B_{j,R}
			:=
			\Big\|
			t\mapsto
			\mathcal M_{j,R}(t)-\mathcal M_{j,R}(t_j^{\lfloor t/\Dt_j\rfloor})
			\Big\|_{\mathbb{L}^2\left([0,T];\mathbb{H}^{-4}(\T^2)\right)}.
			\]
			
			\medskip
			\noindent\textbf{Step 1: estimate of the drift term.}
			By H\"older's inequality,
			\[
			A_{j,R}^2
			\le
			C\Delta t_j^{\,2(1-1/q_*)}
			\|F_{j,R}\|_{\mathbb{L}^{q_*}\left([0,T];\mathbb{H}^{-4}(\T^2)\right)}^2.
			\]
			Equivalently,
			\[
			A_{j,R}^{q_*}
			\le
			C\Delta t_j^{\,q_*-1}
			\|F_{j,R}\|_{\mathbb{L}^{q_*}\left([0,T];\mathbb{H}^{-4}(\T^2)\right)}^{q_*}.
			\]
			Taking expectations and using Lemma~\ref{lem:localized_drift_bound_conv}, we obtain
			\[
			\mathbb E\big[A_{j,R}^{q_*}\big]
			\le
			C\Delta t_j^{\,q_*-1}
			\mathbb E\Big[
			\|F_{j,R}\|_{\mathbb{L}^{q_*}\left([0,T];\mathbb{H}^{-4}(\T^2)\right)}^{q_*}
			\Big]
			\le
			C_R\,\Delta t_j^{\,q_*-1}.
			\]
			Since \(q_*>1\), it follows that, as \(j\to\infty\),
			\[
			\mathbb E\big[A_{j,R}^{q_*}\big]\to0.
			\]
			Hence
			\[
			A_{j,R}\to0
			\qquad\text{in }\mathbb{L}^{q_*}(\Omega),
			\]
			and in particular
			\[
			A_{j,R}\to0
			\qquad\text{in probability}.
			\]
			\medskip
			\noindent\textbf{Step 2: estimate of the martingale term.}
			By It\^o's isometry,
			\[
			\begin{aligned}
				\E\big[B_{j,R}^2\big]
				&=
				\sum_{n=0}^{N_T^j-1}
				\int_{t_j^n}^{t_j^{n+1}}
				\E\Big[
				\big\|
				\mathcal M_{j,R}(t)-\mathcal M_{j,R}(t_j^n)
				\big\|_{\mathbb{H}^{-4}(\T^2)}^2
				\Big]\,\dd t
				\\
				&=
				\sum_{n=0}^{N_T^j-1}
				\int_{t_j^n}^{t_j^{n+1}}
				\int_{t_j^n}^{t}
				\E\Big[
				\sum_{k\in\mathcal I^{K_j}}
				\mathbf 1_{\{s<\tau_{j,R}\}}
				\|\mathcal{P}_j[(\boldsymbol{\sigma}_k\cdot \nabla)\underline\omega_j(s)]\|_{\mathbb{H}^{-4}(\T^2)}^2
				\Big]\,\dd s\,\dd t.
			\end{aligned}
			\]
			By using Lemma~\ref{lem:noise_estimate_conv}, we obtain
			\[
			\begin{aligned}
				\E\big[B_{j,R}^2\big]
				&\le
				C
				\sum_{n=0}^{N_T^j-1}
				\int_{t_j^n}^{t_j^{n+1}}
				\int_{t_j^n}^{t}
				\E\Big[
				\mathbf 1_{\{s<\tau_{j,R}\}}
				\|\underline\omega_j(s)\|_{\mathbb{H}^{-1}(\T^2)}^2
				\Big]\,\dd s\,\dd t.
			\end{aligned}
			\]
			On the event \(\{s<\tau_{j,R}\}\), the stopping-time definition~\eqref{eq:stopping_time_conv} gives
			\[
			\|\underline\omega_j(s)\|_{\mathbb{H}^{-1}(\T^2)}^2\le R.
			\]
			Therefore
			\[
			\begin{aligned}
				\E\big[B_{j,R}^2\big]
				&\le
				CR
				\sum_{n=0}^{N_T^j-1}
				\int_{t_j^n}^{t_j^{n+1}}
				\int_{t_j^n}^{t}\dd s\,\dd t
				\\
				&=
				CR\frac{T\Dt_j}{2}.
			\end{aligned}
			\]
			Thus as $j\to \infty$,
			\[
			B_{j,R}\to0
			\qquad\text{in }\mathbb{L}^2(\Omega),
			\]
			hence in probability.
			
			\medskip
			\noindent\textbf{Step 3: conclusion.}
			Let \(\varepsilon>0\). Then by \eqref{new42}, we obtain
			\[
			\mathbb P\Big[
			\|\omega_j^\sharp-\underline\omega_j\|_{\mathbb{L}^2\left([0,T];\mathbb{H}^{-4}(\T^2)\right)}>\varepsilon
			\Big]
			\le
			\mathbb P\big[\tau_{j,R}<T\big]
			+
			\mathbb P\big[A_{j,R}+B_{j,R}>\varepsilon\big].
			\]
			For fixed \(R\), the second term tends to \(0\) as \(j\to\infty\). Therefore
			\[
			\limsup_{j\to\infty}
			\mathbb P\Big[
			\|\omega_j^\sharp-\underline\omega_j\|_{\mathbb{L}^2\left([0,T];\mathbb{H}^{-4}(\T^2)\right)}>\varepsilon
			\Big]
			\le
			\limsup_{j\to\infty}\mathbb P\big[\tau_{j,R}<T\big].
			\]
			By Lemma~\ref{lem:stop_high_prob_conv},
			\[
			\sup_{j\in\mathbb{N}}\mathbb P\big[\tau_{j,R}<T\big]\le \frac{C}{R},
			\]
			hence
			\[
			\limsup_{j\to\infty}
			\mathbb P\Big[
			\|\omega_j^\sharp-\underline\omega_j\|_{\mathbb{L}^2\left([0,T];\mathbb{H}^{-4}(\T^2)\right)}>\varepsilon
			\Big]
			\le
			\frac{C}{R}.
			\]
			Letting \(R\to\infty\) proves the claim.
		\end{proof}
		\begin{lemma}[The left and right interpolants are close]
			\label{lem:bar_under_close_conv} It holds as $j\to \infty$,
			\[
			\overline\omega_j-\underline\omega_j\to0
			\qquad\text{in probability in }\mathbb{L}^2\bigl([0,T];\mathbb{H}^{-4}(\T^2)\bigr).
			\]
		\end{lemma}
		
		\begin{proof}
			By Lemma~\ref{lem:sharp_under_close_conv}, we already know that as $j\to \infty$,
			\[
			\omega_j^\sharp-\underline\omega_j\to0
			\qquad\text{in probability in }\mathbb{L}^2\bigl([0,T];\mathbb{H}^{-4}(\T^2)\bigr).
			\]
			Thus it is enough to prove that as $j\to \infty$,
			\[
			\omega_j^\sharp-\overline\omega_j\to0
			\qquad\text{in probability in }\mathbb{L}^2\bigl([0,T];\mathbb{H}^{-4}(\T^2)\bigr).
			\]
			
			\medskip
			\noindent\textbf{Step 1: representation on the event of no stopping.}
			Fix \(R>0\). On the event \(\{\tau_{j,R}=T\}\), the stopped and unstopped objects
			coincide on \([0,T]\). Let \(t\in[t_j^n,t_j^{n+1}]\). Since
			\[
			\omega_j^\sharp(t)
			=
			\omega_{N_j}^{n}
			+
			\int_{t_j^n}^t F_{j,R}(s)\,\dd s
			+
			\mathcal M_{j,R}(t)-\mathcal M_{j,R}(t_j^n),
			\]
			and
			\[
			\overline\omega_j(t)=\omega_{N_j}^{n+1}
			=
			\omega_j^\sharp(t_j^{n+1}),
			\]
			then we obtain
			\[
			\omega_j^\sharp(t)-\overline\omega_j(t)
			=
			-\int_t^{t_j^{n+1}} F_{j,R}(s)\,\dd s
			-\bigl(\mathcal M_{j,R}(t_j^{n+1})-\mathcal M_{j,R}(t)\bigr).
			\]
			We now consider
			\[
			\mathbf 1_{\{\tau_{j,R}=T\}}
			\|\omega_j^\sharp-\overline\omega_j\|_{\mathbb{L}^2\left([0,T];\mathbb{H}^{-4}(\T^2)\right)}
			\le
			A_{j,R}^{+}+B_{j,R}^{+},
			\]
			where
			\[
			A_{j,R}^{+}
			:=
			\Big\|
			t\mapsto
			\int_t^{t_j^{\lfloor t/\Dt_j\rfloor+1}}F_{j,R}(s)\,\dd s
			\Big\|_{\mathbb{L}^2\left([0,T];\mathbb{H}^{-4}(\T^2)\right)},
			\]
			and
			\[
			B_{j,R}^{+}
			:=
			\Big\|
			t\mapsto
			\mathcal M_{j,R}(t_j^{\lfloor t/\Dt_j\rfloor+1})-\mathcal M_{j,R}(t)
			\Big\|_{\mathbb{L}^2\left([0,T];\mathbb{H}^{-4}(\T^2)\right)}.
			\]
			
			\medskip
			\noindent\textbf{Step 2: estimate of the drift term.}
			For \(t\in[t_j^n,t_j^{n+1}]\), H\"older's inequality gives
			\[
			\Big\|
			\int_t^{t_j^{n+1}}F_{j,R}(s)\,\dd s
			\Big\|_{\mathbb{H}^{-4}(\T^2)}
			\le
			\Delta t_j^{\,1-1/q_*}
			\|F_{j,R}\|_{\mathbb{L}^{q_*}(t_j^n,t_j^{n+1};\mathbb{H}^{-4}(\T^2))}.
			\]
			Integrating in time over \([0,T]\), we obtain
			\[
			(A_{j,R}^{+})^2
			\le
			C\Delta t_j^{\,2(1-1/q_*)}
			\|F_{j,R}\|_{\mathbb{L}^{q_*}\left([0,T];\mathbb{H}^{-4}(\T^2)\right)}^2.
			\]
			Equivalently,
			\[
			(A_{j,R}^{+})^{q_*}
			\le
			C\Delta t_j^{\,q_*-1}
			\|F_{j,R}\|_{\mathbb{L}^{q_*}\left([0,T];\mathbb{H}^{-4}(\T^2)\right)}^{q_*}.
			\]
			Taking expectations and using Lemma~\ref{lem:localized_drift_bound_conv}, we get
			\[
			\mathbb E\big[(A_{j,R}^{+})^{q_*}\big]
			\le
			C\Delta t_j^{\,q_*-1}
			\mathbb E\Big[
			\|F_{j,R}\|_{\mathbb{L}^{q_*}\left([0,T];\mathbb{H}^{-4}(\T^2)\right)}^{q_*}
			\Big]
			\le
			C_R\,\Delta t_j^{\,q_*-1}.
			\]
			Since \(q_*>1\), it follows that
			\[
			\mathbb E\big[(A_{j,R}^{+})^{q_*}\big]\to 0
			\qquad\text{as }j\to\infty.
			\]
			Hence
			\[
			A_{j,R}^{+}\to0
			\qquad\text{in }\mathbb{L}^{q_*}(\Omega),
			\]
			and therefore also
			\[
			A_{j,R}^{+}\to0
			\qquad\text{in probability}.
			\]
			
			\medskip
			\noindent\textbf{Step 3: estimate of the martingale term.}
			By It\^o's isometry,
			\[
			\begin{aligned}
				\E\big[(B_{j,R}^{+})^2\big]
				&=
				\sum_{n=0}^{N_T^j-1}
				\int_{t_j^n}^{t_j^{n+1}}
				\E\Big[
				\big\|
				\mathcal M_{j,R}(t_j^{n+1})-\mathcal M_{j,R}(t)
				\big\|_{\mathbb{H}^{-4}(\T^2)}^2
				\Big]\,\dd t
				\\
				&=
				\sum_{n=0}^{N_T^j-1}
				\int_{t_j^n}^{t_j^{n+1}}
				\int_t^{t_j^{n+1}}
				\E\Big[
				\sum_{k\in\mathcal I^{K_j}}
				\mathbf 1_{\{s<\tau_{j,R}\}}
				\|\mathcal{P}_j[(\boldsymbol{\sigma}_k\!\cdot \nabla)\underline\omega_j(s)]\|_{\mathbb{H}^{-4}(\T^2)}^2
				\Big]\,\dd s\,\dd t.
			\end{aligned}
			\]
			By using Lemma~\ref{lem:noise_estimate_conv}, we obtain
			\[
			\begin{aligned}
				\E\big[(B_{j,R}^{+})^2\big]
				&\le
				C
				\sum_{n=0}^{N_T^j-1}
				\int_{t_j^n}^{t_j^{n+1}}
				\int_t^{t_j^{n+1}}
				\E\Big[
				\mathbf 1_{\{s<\tau_{j,R}\}}
				\|\underline\omega_j(s)\|_{\mathbb{H}^{-1}(\T^2)}^2
				\Big]\,\dd s\,\dd t.
			\end{aligned}
			\]
			On the event \(\{s<\tau_{j,R}\}\), the definition of the stopping time gives
			\[
			\|\underline\omega_j(s)\|_{\mathbb{H}^{-1}(\T^2)}^2\le R.
			\]
			Therefore
			\[
			\E\big[(B_{j,R}^{+})^2\big]
			\le
			CR
			\sum_{n=0}^{N_T^j-1}
			\int_{t_j^n}^{t_j^{n+1}}
			\int_t^{t_j^{n+1}} \dd s\,\dd t
			\le
			CR\,T\,\Dt_j.
			\]
			Hence as $j\to \infty$,
			\[
			B_{j,R}^{+}\to0
			\qquad\text{in }\mathbb{L}^2(\Omega),
			\]
			and therefore also in probability.
			
			\medskip
			\noindent\textbf{Step 4: remove the stopping.}
			Let \(\varepsilon>0\). From Step 1 we have
			\[
			\mathbf 1_{\{\tau_{j,R}=T\}}
			\|\omega_j^\sharp-\overline\omega_j\|_{\mathbb{L}^2\left([0,T];\mathbb{H}^{-4}(\T^2)\right)}
			\le
			A_{j,R}^{+}+B_{j,R}^{+},
			\]
			and so
			\[
			\mathbb P\Big[
			\|\omega_j^\sharp-\overline\omega_j\|_{\mathbb{L}^2\left([0,T];\mathbb{H}^{-4}(\T^2)\right)}
			>\varepsilon
			\Big]
			\le
			\mathbb P\bigl[\tau_{j,R}<T\bigr]
			+
			\mathbb P\bigl[ A_{j,R}^{+}+B_{j,R}^{+}>\varepsilon\bigr].
			\]
			For fixed \(R\), Step 2 and Step 3 imply as $j\to \infty$,
			\[
			A_{j,R}^{+}+B_{j,R}^{+}\to0
			\qquad\text{in probability},
			\]
			hence
			\[
			\limsup_{j\to\infty}
			\mathbb P\Big[
			\|\omega_j^\sharp-\overline\omega_j\|_{\mathbb{L}^2\left([0,T];\mathbb{H}^{-4}(\T^2)\right)}
			>\varepsilon
			\Big]
			\le
			\sup_{j\in\mathbb{N}}\mathbb P\bigl[\tau_{j,R}<T\bigr].
			\]
			By letting \(R\to\infty\) and using Lemma~\ref{lem:stop_high_prob_conv},, we conclude that as $j\to \infty$,
			\[
			\omega_j^\sharp-\overline\omega_j\to0
			\qquad\text{in probability in }\mathbb{L}^2\bigl([0,T];\mathbb{H}^{-4}(\T^2)\bigr).
			\]
			
			\medskip
			\noindent\textbf{Step 5: conclude.}
			Finally, by the triangle inequality,
			\[
			\|\overline\omega_j-\underline\omega_j\|_{\mathbb{L}^2\left([0,T];\mathbb{H}^{-4}(\T^2)\right)}
			\le
			\|\overline\omega_j-\omega_j^\sharp\|_{\mathbb{L}^2\left([0,T];\mathbb{H}^{-4}(\T^2)\right)}
			+
			\|\omega_j^\sharp-\underline\omega_j\|_{\mathbb{L}^2\left([0,T];\mathbb{H}^{-4}(\T^2)\right)}.
			\]
			Both terms on the right converge to \(0\) in probability. Therefore as $j\to \infty$,
			\[
			\overline\omega_j-\underline\omega_j\to0
			\qquad\text{in probability in }\mathbb{L}^2\bigl([0,T];\mathbb{H}^{-4}(\T^2)\bigr),
			\]
			as claimed.
		\end{proof}
		\subsection{Canonical path space and tightness}
		\label{subsec:tightness_conv}
		
		We now introduce the path space used for the compactness argument. For the continuous interpolant we set
		\[
		\mathcal X_\omega
		:=
		C\bigl([0,T];\mathbb{H}^{-5}(\T^2)\bigr).
		\]
		For the left and right interpolants we use
		\[
		\mathcal Y_\omega
		:=
		\mathbb{L}^2\bigl([0,T];\mathbb{H}^{-5}(\T^2)\bigr)
		\cap
		\mathbb{L}^2\bigl([0,T];\mathbb{H}^{-\alpha}(\T^2)\bigr)_{\mathrm{weak}}.
		\]
		We also define
		\[
		\mathcal X_{\bf W}:=C\bigl([0,T];\mathbb R^2\bigr)^{\mathcal I}
		\]
		with the product topology. 
		The full path space is
		\begin{equation}\label{eq:full_path_space_conv}
			\mathfrak X
			:=
			\mathcal X_\omega
			\times
			\mathcal Y_\omega
			\times
			\mathcal Y_\omega
			\times
			\mathcal X_{\bf W}.
		\end{equation}
		Its five coordinates correspond respectively to
		\[
		\omega_j^\sharp,\qquad
		\underline\omega_j,\qquad
		\overline\omega_j,\qquad
		({\bf W}_k)_{k\in\mathcal I}.
		\]
		
		\begin{proposition}[Tightness]
			\label{prop:tightness_main_conv}
			The laws of
			\[
			\Big(
			\omega_j^\sharp,\underline\omega_j,\overline\omega_j,
			({\bf W}_k)_{k\in\mathcal I}
			\Big)
			\]
			are tight on \(\mathfrak X\).
		\end{proposition}
		
		\begin{proof}
			We prove tightness coordinate by coordinate.
			
			\medskip
			\noindent\textbf{Step 1: the continuous interpolant \(\omega_j^\sharp\).}
			We first prove tightness of \((\omega_j^\sharp)_{j\in\mathbb{N}}\) in
			\(C\bigl([0,T];\mathbb{H}^{-5}(\T^2)\bigr)\). Fix \(\varepsilon>0\). By
			Lemma~\ref{lem:localized_holder_conv}, for every \(R>0\) and every \(p>2\), there
			exist \(\gamma>0\) and a constant \(C_{R,p}\), independent of \(j\), such that
			\[
			\mathbb E\Big[
			\|\omega_{j,R}^\sharp\|_{C^\gamma\left([0,T];\mathbb{H}^{-4}(\T^2)\right)}^p
			\Big]
			\le C_{R,p}.
			\]
			Hence, by Markov's inequality, for every \(\eta>0\) one can choose \(M=M(\eta,R)\)
			so large that
			\begin{align}\label{new50}
				\sup_{j\in\mathbb{N}}
				\mathbb P\Big[
				\|\omega_{j,R}^\sharp\|_{C^\gamma\left([0,T];\mathbb{H}^{-4}(\T^2)\right)}>M
				\Big]
				\le \eta.
			\end{align}
			Now the embedding
			\[
			C^\gamma\bigl([0,T];\mathbb{H}^{-4}(\T^2)\bigr)\hookrightarrow C\bigl([0,T];\mathbb{H}^{-5}(\T^2)\bigr)
			\]
			is compact, because the embedding \(\mathbb{H}^{-4}(\T^2)\hookrightarrow
			\mathbb{H}^{-5}(\T^2)\) is  compact, and \(\gamma>0\). Therefore the set
			\[
			K_{R,M}
			:=
			\Big\{
			v\in C^\gamma\bigl([0,T];\mathbb{H}^{-4}(\T^2)\bigr) : \|v\|_{C^\gamma\left([0,T];\mathbb{H}^{-4}(\T^2)\right)}\le M
			\Big\}
			\]
			is relatively compact in \(C\bigl([0,T];\mathbb{H}^{-5}(\T^2)\bigr)\). Since
			\(\omega_j^\sharp=\omega_{j,R}^\sharp\) on the event \(\{\tau_{j,R}=T\}\), we obtain
			\[
			\mathbb P\bigl[\omega_j^\sharp\notin K_{R,M}\bigr]
			\le
			\mathbb P\bigl[\tau_{j,R}<T\bigr]
			+
			\mathbb P\Big[
			\|\omega_{j,R}^\sharp\|_{C^\gamma\left([0,T];\mathbb{H}^{-4}(\T^2)\right)}>M
			\Big].
			\]
			By Lemma~\ref{lem:stop_high_prob_conv}, the first term is uniformly small for \(R\)
			large, and then the second term is uniformly small for \(M\) large by \eqref{new50}. This proves
			tightness of \((\omega_j^\sharp)_{j\in\mathbb{N}}\) in \(C\bigl([0,T];\mathbb{H}^{-5}(\T^2)\bigr)\). Therefore, we conclude that the laws of \((\omega_j^\sharp)_{j\in\mathbb{N}}\)
			are tight in \(\mathcal X_\omega\).
			
			\medskip
			\noindent\textbf{Step 2: the left interpolant \(\underline\omega_j\).}
			The same uniform estimate \eqref{eq:interpolant_Halpha_bounds_conv} implies
			tightness of \((\underline\omega_j)_{j\in\mathbb{N}}\) in
			\[
			\mathbb{L}^2\bigl([0,T];\mathbb{H}^{-\alpha}(\T^2)\bigr)_{\mathrm{weak}}.
			\]
			To obtain tightness in the strong space \(\mathbb{L}^2\bigl([0,T];\mathbb{H}^{-5}(\T^2)\bigr)\), we use
			its continuous interpolant $\omega_j^\sharp$. Since the embedding
			\[
			C\bigl([0,T];\mathbb{H}^{-5}(\T^2)\bigr)\hookrightarrow \mathbb{L}^2\bigl([0,T];\mathbb{H}^{-5}(\T^2)\bigr)
			\]
			is continuous, Step~1 implies that \((\omega_j^\sharp)_{j\in\mathbb{N}}\) is tight in
			\(\mathbb{L}^2\bigl([0,T];\mathbb{H}^{-5}(\T^2)\bigr)\).
			By Lemma~\ref{lem:sharp_under_close_conv}, as $j\to \infty$,
			\[
			\omega_j^\sharp-\underline\omega_j\to0
			\qquad\text{in probability in }\mathbb{L}^2\bigl([0,T];\mathbb{H}^{-4}(\T^2)\bigr).
			\]
			Since the embedding
			\[
			\mathbb{H}^{-4}(\T^2)\hookrightarrow \mathbb{H}^{-5}(\T^2)
			\]
			is continuous, we also have as $j\to \infty$,
			\[
			\omega_j^\sharp-\underline\omega_j\to0
			\qquad\text{in probability in }\mathbb{L}^2\bigl([0,T];\mathbb{H}^{-5}(\T^2)\bigr).
			\]
			Because \(\mathbb{L}^2\bigl([0,T];\mathbb{H}^{-5}(\T^2)\bigr)\) is Polish, tightness is stable under
			perturbations converging to zero in probability. Therefore the laws of
			\((\underline\omega_j)_{j\in\mathbb{N}}\) are tight in \(\mathbb{L}^2\bigl([0,T];\mathbb{H}^{-5}(\T^2)\bigr)\), and
			hence in \(\mathcal Y_\omega\).
			
			\medskip
			\noindent\textbf{Step 3: the right interpolant \(\overline\omega_j\).}
			Again \eqref{eq:interpolant_Halpha_bounds_conv} implies tightness of \(\overline\omega_j\) in
			\[
			\mathbb{L}^2\bigl([0,T];\mathbb{H}^{-\alpha}(\T^2)\bigr)_{\mathrm{weak}}.
			\]
			Moreover, Lemma~\ref{lem:bar_under_close_conv} gives as $j\to \infty$,
			\[
			\overline\omega_j-\underline\omega_j\to0
			\qquad\text{in probability in }\mathbb{L}^2\bigl([0,T];\mathbb{H}^{-4}(\T^2)\bigr),
			\]
			and therefore also in \(\mathbb{L}^2\bigl([0,T];\mathbb{H}^{-5}(\T^2)\bigr)\). Since
			\((\underline\omega_j)_{j\in\mathbb{N}}\) is already tight in \(\mathbb{L}^2\bigl([0,T];\mathbb{H}^{-5}(\T^2)\bigr)\),
			the same stability argument shows that \((\overline\omega_j)_{j\in\mathbb{N}}\) is tight in
			\(\mathbb{L}^2\bigl([0,T];\mathbb{H}^{-5}(\T^2)\bigr)\). Thus the laws of \((\overline\omega_j)_{j\in\mathbb{N}}\) are
			tight in \(\mathcal Y_\omega\).
			
			\medskip
			\noindent\textbf{Step 4: the Brownian coordinate.}
			The law of \(({\bf W}_k)_{k\in\mathcal I}\) on \(\mathcal X_{\bf W}\) is the product Wiener
			measure, hence tight.
			
			\medskip
			Combining the four coordinates proves tightness on \(\mathfrak X\).
		\end{proof}

		\subsection{Skorokhod--Jakubowski representation}
		\label{subsec:skorokhod_conv}
		
		We now pass to a new probability space on which the approximating variables
		converge almost surely but now is on new probability space.
		
		\begin{proposition}[Skorokhod--Jakubowski representation]
			\label{prop:skorokhod_jakubowski_conv}
			After extraction of a subsequence (not relabeled), there exist a probability space
			\[
			(\widetilde\Omega,\widetilde{\mathcal F},\widetilde{\mathbb P})
			\]
			and random variables
			\[
			\widetilde X_j
			:=
			\Big(
			\widetilde\omega_j^\sharp,\widetilde{\underline\omega}_j,
			\widetilde{\overline\omega}_j,\bigl( \widetilde{\mathbf W}^j_k \bigr)_{k\in\mathcal{I}}
			\Big),
			\qquad
			\widetilde X
			:=
			\Big(
			\widetilde\omega^\sharp,\widetilde{\underline\omega},
			\widetilde{\overline\omega},\bigl(\widetilde{\mathbf W}_k\bigl)_{k\in\mathcal{I}}
			\Big),
			\]
			with values in \(\mathfrak X\), such that:
			
			\begin{enumerate}[label=\textup{(\roman*)}]
				\item for every \(j\in \mathbb{N}\), the law of \(\widetilde X_j\) coincides with the law of
				\[
				\Big(
				\omega_j^\sharp,\underline\omega_j,\overline\omega_j,
				({\bf W}_k^j)_{k\in\mathcal I}
				\Big);
				\]
				
				\item as $j\to \infty$,
				\[
				\widetilde\omega_j^\sharp\to\widetilde\omega^\sharp
				\quad\widetilde{\mathbb{P}}-\text{a.s. in }C\bigl([0,T];\mathbb{H}^{-5}(\T^2)\bigr),
				\]
				\[\widetilde{\underline\omega}_j\rightharpoonup\widetilde{\underline\omega},
				\qquad
				\widetilde{\overline\omega}_j\rightharpoonup\widetilde{\overline\omega}
				\quad\widetilde{\mathbb{P}}-\text{a.s. in }\mathbb{L}^2\bigl([0,T];\mathbb{H}^{-\alpha}(\T^2)\bigr),
				\]
				and also
				\[(\widetilde{\bf W}_k^j)_{k\in\mathcal I}\to (\widetilde{\bf W}_k)_{k\in\mathcal I}\quad\widetilde{\mathbb{P}}-\text{a.s. in } C\bigl([0,T];\mathbb R^2\bigr)^{\mathcal I};\]
				\item  there exists a process \(\widetilde\omega\) such that
				\[
				\widetilde\omega^\sharp=\widetilde{\underline\omega}
				=\widetilde{\overline\omega}
				=:\widetilde\omega
				\]
				\(\widetilde{\mathbb P}\)-a.s. as elements of
				\(\mathbb{L}^2\bigl([0,T];\mathbb{H}^{-5}(\T^2)\bigr)\), and \(\widetilde\omega\)
				has a continuous \(\mathbb{H}^{-5}\)-valued version, still denoted by
				\(\widetilde\omega\);
				
				\item if \((\widetilde{\mathcal F}_t)_{t\in[0,T]}\) denotes the usual augmentation of
				the filtration generated by
				\[
				\bigg\{\bigl(\widetilde{\omega}(s),\widetilde{\bf W}_k({s})\bigr):
				0\le s\le t,\ k\in\mathcal I\bigg\},
				\]
				then \(\widetilde\omega\) is \((\widetilde{\mathcal F}_t)\)-adapted and
				\((\widetilde{\bf W}_k)_{k\in\mathcal I}\) is a family of independent
				\((\widetilde{\mathcal F}_t)\)-Brownian motions.
			\end{enumerate}
		\end{proposition}
		
		\begin{proof}
			By Proposition~\ref{prop:tightness_main_conv}, the laws of
			\[
			\bigl(
			\omega_j^\sharp,\underline{\omega}_j,\overline{\omega}_j,({\bf W}_k)_{k\in\mathcal I}
			\bigr)
			\]
			are tight on \(\mathfrak X\). Since \(\mathfrak X\) is a topological space with a
			countable family of continuous functions that separate points, we may apply Skorokhod--Jakubowski representation theorem 
			\cite{Jakubowski1997}. Hence there exist a probability space
			\[
			(\widetilde\Omega,\widetilde{\mathcal F},\widetilde{\mathbb P})
			\]
			and \(\mathfrak X\)-valued random variables \(\widetilde X_j\) and \(\widetilde X\)
			satisfying \textup{(i)}--\textup{(ii)}.

			We next identify the three limiting vorticity coordinates. Since the law of
			\(\widetilde\omega_j^\sharp-\widetilde{\underline\omega}_j\) coincides with the law
			of \(\omega_j^\sharp-\underline\omega_j\), Lemma~\ref{lem:sharp_under_close_conv}
			and the continuous embedding
			\[
			\mathbb{H}^{-4}(\T^2)\hookrightarrow \mathbb{H}^{-5}(\T^2)
			\]
			imply as $j\to \infty$,
			\[
			\widetilde\omega_j^\sharp-\widetilde{\underline\omega}_j\to0
			\qquad\text{in probability in }\mathbb{L}^2\bigl([0,T];\mathbb{H}^{-5}(\T^2)\bigr).
			\]
			On the other hand, both sequences converge almost surely in
			\(\mathbb{L}^2\bigl([0,T];\mathbb{H}^{-5}(\T^2)\bigr)\), hence
			\[
			\widetilde\omega_j^\sharp-\widetilde{\underline\omega}_j
			\to
			\widetilde\omega^\sharp-\widetilde{\underline\omega}
			\qquad\widetilde{\mathbb{P}}-\text{a.s. in }\mathbb{L}^2\bigl([0,T];\mathbb{H}^{-5}(\T^2)\bigr).
			\]
			Therefore
			\[
			\widetilde\omega^\sharp=\widetilde{\underline\omega}
			\qquad
			\widetilde{\mathbb P}-\text{a.s. in }\mathbb{L}^2\bigl([0,T];\mathbb{H}^{-5}(\T^2)\bigr).
			\]
			Similarly, by transport of the law and Lemma~\ref{lem:bar_under_close_conv}, as $j\to \infty$,
			\[
			\widetilde{\overline\omega}_j-\widetilde{\underline\omega}_j\to0
			\qquad\text{in probability in }\mathbb{L}^2\bigl([0,T];\mathbb{H}^{-4}(\T^2)\bigr),
			\]
			hence also in \(\mathbb{L}^2\bigl([0,T];\mathbb{H}^{-5}(\T^2)\bigr)\). Since as $j\to \infty$,
			\[
			\widetilde{\overline\omega}_j-\widetilde{\underline\omega}_j
			\to
			\widetilde{\overline\omega}-\widetilde{\underline\omega}
			\qquad\widetilde{\mathbb{P}}-\text{a.s. in }\mathbb{L}^2\bigl([0,T];\mathbb{H}^{-5}(\T^2)\bigr),
			\]
			we conclude that
			\[
			\widetilde{\overline\omega}=\widetilde{\underline\omega}
			\qquad
			\widetilde{\mathbb P}-\text{a.s. in }\mathbb{L}^2\bigl([0,T];\mathbb{H}^{-5}(\T^2)\bigr).
			\]
			We denote the common limit by
			\[
			\widetilde\omega:=\widetilde\omega^\sharp
			=\widetilde{\underline\omega}
			=\widetilde{\overline\omega}.
			\]
			Since \(\widetilde\omega^\sharp\in C\bigl([0,T];\mathbb{H}^{-5}(\T^2)\bigr)\), this
			gives a continuous \(\mathbb{H}^{-5}(\T^2)\)-valued version of \(\widetilde\omega\). This
			proves \textup{(iii)}.
			
			It remains to prove that \((\widetilde{\bf W}_k)_{k\in\mathcal I}\) is a family of
			independent Brownian motions with respect to {the usual augmentation filtration \((\widetilde{\mathcal F}_t)_{t\in[0,T]}\) of the filtration generated by \(\big(\widetilde{\omega}, (\widetilde{\bf W}_k)_{k\in\mathcal I}\big)\)}. Fix \(0\le s<t\le T\), \(k,\ell\in\mathcal I\), and let \(\Psi\) be a
			bounded continuous functional of the path variables up to time \(s\). Since the law
			of \(\widetilde X_j\) coincides with the law of the original tuple, we have
			\[
			\widetilde{\mathbb E}\Big[
			(\widetilde{\bf W}_k^j(t)-\widetilde{\bf W}_k^j(s))\,\Psi(\widetilde X_j|_{[0,s]})
			\Big]=0,
			\]
			\[
			\widetilde{\mathbb E}\Big[
			\big((\widetilde{\bf W}_k^j(t)-\widetilde{\bf W}_k^j(s))^2-(t-s)\big)\,
			\Psi(\widetilde X_j|_{[0,s]})
			\Big]=0,
			\]
			and, if \(k\neq \ell\),
			\[
			\widetilde{\mathbb E}\Big[
			(\widetilde{\bf W}_k^j(t)-\widetilde{\bf W}_k^j(s))
			(\widetilde {\bf W}_\ell^j(t)-\widetilde {\bf W}_\ell^j(s))\,
			\Psi(\widetilde X_j|_{[0,s]})
			\Big]=0.
			\]
			Since \(\widetilde X_j\to\widetilde X\) almost surely in \(\mathfrak X\), dominated
			convergence yields the same identities with \(\widetilde X_j\) replaced by
			\(\widetilde X\). By a monotone-class argument, these identities extend to all
			bounded \(\widetilde{\mathcal F}_s\)-measurable random variables. Therefore each
			\(\widetilde{\bf W}_k\) is a continuous \((\widetilde{\mathcal F}_t)\)-martingale with
			quadratic variation \(t\), and the cross-variations vanish for \(k\neq \ell\). By
			L\'evy's characterization theorem~\cite[Theorem~3.16]{KaratzasShreve1991}, \((\widetilde{\bf W}_k)_{k\in\mathcal I}\) is a
			family of independent \((\widetilde{\mathcal F}_t)\)-Brownian motions. This proves
			\textup{(iv)}.
		\end{proof}
		
		\subsection{Passage to the limit}
		\label{subsec:driftimit_conv}
		
		Fix \(\varphi\in C^\infty(\T^2)\). Since the law of
		\(\bigl(\widetilde\omega_j^\sharp,\widetilde{\underline\omega}_j,
		\widetilde{\overline\omega}_j,(\widetilde{\mathbf W}_k^j)_{k\in\mathcal{I}}\bigl)\) coincides with the law of
		\(\bigl(\omega_j^\sharp,\underline\omega_j,\overline\omega_j,(\mathbf W_k^j)_{k\in\mathcal{I}}\bigr)\), the discrete
		weak formulation reads \(\widetilde{\mathbb P}\)-a.s.
		\begin{equation}\label{eq:discrete_weak_form_conv}
			\begin{aligned}
				\langle \widetilde\omega_j^\sharp(t),\varphi\rangle
				&=
				\langle \mathcal{P}_j\omega_0,\varphi\rangle
				-
				\int_0^t
				\big\langle
				(\widetilde{\overline {\bf u}}_j(s)\cdot \nabla)\widetilde{\underline\omega}_j(s),
				\mathcal{P}_j\varphi
				\big\rangle\,\dd s
				\\
				&\quad
				+
				\frac{c_{K_j}}{2}
				\int_0^t
				\langle \widetilde{\overline\omega}_j(s),\Delta(\mathcal{P}_j\varphi)\rangle\,\dd s
				+
				\sum_{k\in\mathcal I}
				\int_0^t g_{j,k}^{\varphi}(s)\,\dd\widetilde{\bf W}^j_k(s),
			\end{aligned}
		\end{equation}
		where
		\[
		\widetilde{\overline {\bf u}}_j
		:=
		\nabla^\perp(-\Delta)^{-1}\widetilde{\overline\omega}_j,
		\]
		and
		\[
		g_{j,k}^{\varphi}(s)
		:=
		\mathbf 1_{\{k\in\mathcal I^{K_j}\}}
		\langle \widetilde{\underline\omega}_j(s)\boldsymbol{\sigma}_k,\nabla(\mathcal{P}_j\varphi)\rangle .
		\]
		We now pass to the limit term by term.
		
		\noindent
		\textbf{Initial condition:} Since as $j\to \infty$, \(\mathcal{P}_j\omega_0\to\omega_0\) in \(\mathbb{H}^{-1}(\T^2)\), then as $j\to \infty$,
		\[
		\langle \mathcal{P}_j\omega_0,\varphi\rangle\to\langle \omega_0,\varphi\rangle .
		\]
		
		\noindent
		\textbf{Strong convergence of the interpolants:} From Proposition~\ref{prop:skorokhod_jakubowski_conv} and the previous closeness
		lemmas, we already know that as $j\to \infty$,
		\[
		\widetilde{\underline\omega}_j\to\widetilde\omega,
		\qquad
		\widetilde{\overline\omega}_j\to\widetilde\omega
		\qquad\widetilde{\mathbb{P}}-\text{a.s. in }\mathbb{L}^2\bigl([0,T];\mathbb{H}^{-5}(\T^2)\bigr).
		\]
		Moreover, both sequences are a.s. bounded in
		\(\mathbb{L}^2\bigl([0,T];\mathbb{H}^{-\alpha}(\T^2)\bigr)\). Therefore, for every
		\(\gamma\) satisfying
		\[
		\alpha<\gamma_1<5,
		\]
		interpolation yields as $j\to \infty$,
		\begin{equation}\label{eq:strong_Hminusgamma_conv}
			\widetilde{\underline\omega}_j\to\widetilde\omega,
			\qquad
			\widetilde{\overline\omega}_j\to\widetilde\omega
			\qquad\text{a.s. in }\mathbb{L}^2([0,T];\mathbb{H}^{-\gamma_1}(\T^2)).
		\end{equation}
		
		\noindent
		\textbf{Nonlinear term:} Fix \(\beta\) such that
		\begin{equation}\label{eq:beta_choice_conv}
			\alpha<\beta<\frac12.
		\end{equation}
		By \eqref{eq:strong_Hminusgamma_conv}, taking \(\gamma_1=\beta\), we have as $j\to \infty$,
		\[
		\widetilde{\underline\omega}_j\to\widetilde\omega,
		\qquad
		\widetilde{\overline\omega}_j\to\widetilde\omega
		\qquad\widetilde{\mathbb{P}}-\text{a.s. in }\mathbb{L}^2([0,T];\mathbb{H}^{-\beta}(\T^2)).
		\]
		Define
		\[
		\widetilde{\overline {\bf u}}_j
		:=
		\nabla^\perp(-\Delta)^{-1}\widetilde{\overline\omega}_j,
		\qquad
		\widetilde{\bf u}
		:=
		\nabla^\perp(-\Delta)^{-1}\widetilde\omega.
		\]
		Since the Biot--Savart operator is continuous from \(\mathbb{H}^{-\beta}\) to
		\(\mathbb{H}^{1-\beta}\), it follows that as $j\to \infty$,
		\begin{align}\label{strongconvergene}
			\widetilde{\overline {\bf u}}_j\to\widetilde{\bf u}
			\qquad\text{a.s. in }\mathbb{L}^2([0,T];\mathbb{H}^{1-\beta}(\T^2)).
		\end{align}
		Because \(\beta<1/2\), we have
		\[
		\mathbb{H}^{1-\beta}(\T^2)\hookrightarrow \mathbb{H}^\beta(\T^2).
		\]
		Since \(\mathcal{P}_j\varphi\to\varphi\) in \(C^\infty(\T^2)\), we obtain as $j\to \infty$,
		\[
		\widetilde{\overline {\bf u}}_j\cdot \nabla(\mathcal{P}_j\varphi)
		\to
		\widetilde{\bf u}\cdot \nabla\varphi
		\qquad\widetilde{\mathbb{P}}-\text{a.s. in }\mathbb{L}^2([0,T];\mathbb{H}^\beta(\T^2)).
		\]
		By using \(\operatorname{div}\widetilde{\overline {\bf u}}_j=0\), we write
		\[
		-\big\langle
		(\widetilde{\overline {\bf u}}_j\cdot \nabla)\widetilde{\underline\omega}_j,
		\mathcal{P}_j\varphi
		\big\rangle
		=
		\big\langle
		\widetilde{\underline\omega}_j,\,
		\widetilde{\overline {\bf u}}_j\cdot \nabla(\mathcal{P}_j\varphi)
		\big\rangle .
		\]
		By the duality \(\mathbb{H}^{-\beta}\times\mathbb{H}^\beta\), we conclude that as $j\to \infty$,
		\[
		\big\langle
		\widetilde{\underline\omega}_j,\,
		\widetilde{\overline {\bf u}}_j\cdot \nabla(\mathcal{P}_j\varphi)
		\big\rangle
		\to
		\big\langle
		\widetilde\omega,\widetilde{\bf u}\cdot \nabla\varphi
		\big\rangle
		\qquad\widetilde{\mathbb{P}}-\text{a.s. in }\mathbb{L}^1(0,T).
		\]
		Therefore, for every \(t\in[0,T]\), as $j\to \infty$,
		\begin{equation}\label{eq:nonlinearimit_conv}
			-\int_0^t
			\big\langle
			(\widetilde{\overline {\bf u}}_j(s)\cdot \nabla)\widetilde{\underline\omega}_j(s),
			\mathcal{P}_j\varphi
			\big\rangle\,\dd s
			\to
			\int_0^t
			\big\langle
			\widetilde\omega(s),\widetilde{\bf u}(s)\cdot \nabla\varphi
			\big\rangle\,\dd s
			\qquad\widetilde{\mathbb{P}}-\text{a.s.}
		\end{equation}
		
		\noindent
		\textbf{It\^o correction:}
		Since as $j\to \infty$, \(c_{K_j}\to c_\infty\), \(\Delta(\mathcal{P}_j\varphi)\to\Delta\varphi\) in
		\(\mathbb{H}^1(\T^2)\), and
		\(\widetilde{\overline\omega}_j\to\widetilde\omega\) strongly in
		\(\mathbb{L}^2([0,T];\mathbb{H}^{-1}(\T^2))\) by \eqref{eq:strong_Hminusgamma_conv}, we obtain as $j\to \infty$,
		\begin{equation}\label{eq:itoimit_conv}
			\frac{c_{K_j}}{2}
			\int_0^t
			\langle \widetilde{\overline\omega}_j(s),\Delta(\mathcal{P}_j\varphi)\rangle\,\dd s
			\to
			\frac{c_\infty}{2}
			\int_0^t
			\langle \widetilde\omega(s),\Delta\varphi\rangle\,\dd s .
		\end{equation}
		
		\noindent
		\textbf{Stochastic term:}
		For \(k\in\mathcal I\), define
		\[
		g_{j,k}^{\varphi}(s)
		:=
		\mathbf 1_{\{k\in\mathcal I^{K_j}\}}
		\langle \widetilde{\underline\omega}_j(s)\boldsymbol{\sigma}_k,\nabla(\mathcal{P}_j\varphi)\rangle,
		\qquad
		g_k^\varphi(s)
		:=
		\langle \widetilde\omega(s)\boldsymbol{\sigma}_k,\nabla\varphi\rangle.
		\]
		We
		may write $\widetilde{\mathbb{P}}$-a.s., for all $t\in[0,T]$,
		\[
		I_j^\varphi(t)
		:=
		\sum_{k\in\mathcal I}
		\int_0^t g_{j,k}^{\varphi}(s)\,\dd\widetilde{\bf W}^j_k(s),
		\qquad
		I^\varphi(t)
		:=
		\sum_{k\in\mathcal I}
		\int_0^t g_k^\varphi(s)\,\dd\widetilde{\bf W}_k(s).
		\]
		
		\begin{lemma}[Convergence of the stochastic coefficients]
			\label{lem:coeff_convergence_conv}
			For every \(\varphi\in C^\infty(\T^2)\), as $j\to \infty$,
			\begin{equation}\label{eq:coeff_convergence_conv}
				\widetilde{\mathbb E}\Bigg[
				\int_0^T
				\sum_{k\in\mathcal I}
				|g_{j,k}^{\varphi}(s)-g_k^\varphi(s)|^2\,\dd s
				\Bigg]
				\longrightarrow0 .
			\end{equation}
		\end{lemma}
		
		\begin{proof}
			We write
			\[
			g_{j,k}^{\varphi}-g_k^\varphi
			=
			A_{j,k}+B_{j,k}+C_{j,k},
			\]
			where
			\[
			A_{j,k}
			:=
			\mathbf 1_{\{k\in\mathcal I^{K_j}\}}
			\langle
			(\widetilde{\underline\omega}_j-\widetilde\omega)\boldsymbol{\sigma}_k,\nabla(\mathcal{P}_j\varphi)
			\rangle,
			\]
			\[
			B_{j,k}
			:=
			\mathbf 1_{\{k\in\mathcal I^{K_j}\}}
			\langle
			\widetilde\omega\,\boldsymbol{\sigma}_k,\nabla(\mathcal{P}_j\varphi-\varphi)
			\rangle,
			\]
			\[
			C_{j,k}
			:=
			-\mathbf 1_{\{k\notin\mathcal I^{K_j}\}}
			\langle
			\widetilde\omega\,\boldsymbol{\sigma}_k,\nabla\varphi
			\rangle.
			\]
			
			\medskip
			\noindent\textbf{Term \(A_{j,k}:\)}
			By \eqref{eq:noise_scalar_estimate_conv}, applied with
			\(f=\widetilde{\underline\omega}_j(s)-\widetilde\omega(s)\) and the test function
			\(\mathcal{P}_j\varphi\), we have
			\[
			\sum_{k\in\mathcal I}|A_{j,k}(s)|^2
			\le
			\|\mathcal{P}_j\varphi\|_{\mathbb{H}^4(\T^2)}^2
			\|\widetilde{\underline\omega}_j(s)-\widetilde\omega(s)\|_{\mathbb{H}^{-1}(\T^2)}^2.
			\]
			Therefore
			\[
			\widetilde{\mathbb E}\Bigg[
			\int_0^T\sum_{k\in\mathcal I}|A_{j,k}(s)|^2\,\dd s
			\Bigg]
			\le
			\|\varphi\|_{\mathbb{H}^4(\T^2)}^2\,
			\widetilde{\mathbb E}\Big[
			\|\widetilde{\underline\omega}_j-\widetilde\omega\|_{\mathbb{L}^2([0,T];\mathbb{H}^{-1}(\T^2))}^2
			\Big].
			\]
			By \eqref{eq:strong_Hminusgamma_conv}, the right-hand side tends to \(0\) as $j\to \infty$.
			
			\medskip
			\noindent\textbf{Term \(B_{j,k}:\)}
			Again by \eqref{eq:noise_scalar_estimate_conv}, now with
			\(f=\widetilde\omega(s)\) and test function \(\mathcal{P}_j\varphi-\varphi\),
			\[
			\sum_{k\in\mathcal I}|B_{j,k}(s)|^2
			\le
			\|{\mathcal{P}_j\varphi-\varphi}\|_{\mathbb{H}^4(\mathbb{T}^2)}^2
			\|\widetilde\omega(s)\|_{\mathbb{H}^{-1}(\T^2)}^2.
			\]
			Since $\|{\mathcal{P}_j\varphi-\varphi}\|_{\mathbb{H}^4(\mathbb{T}^2)}^2\to 0$ as $j\to \infty$,
			therefore as $j\to \infty$,
			\[
			\widetilde{\mathbb E}\Bigg[
			\int_0^T\sum_{k\in\mathcal I}|B_{j,k}(s)|^2\,\dd s
			\Bigg]
			\longrightarrow0.
			\]
			
			\medskip
			\noindent\textbf{Term \(C_{j,k}:\)} 
			For a.e. \(s\in[0,T]\), \eqref{eq:noise_scalar_estimate_conv} gives \(\widetilde{\mathbb P}\)-a.s.,
			\[
			\sum_{k\in\mathcal I}
			|\langle \widetilde\omega(s)\boldsymbol{\sigma}_k,\nabla\varphi\rangle|^2
			\le
			\|\varphi\|^2_{\mathbb{H}^4(\T^2)}\|\widetilde\omega(s)\|_{\mathbb{H}^{-1}(\T^2)}^2.
			\]
			Hence
			\[
			0\le
			\int_0^T\sum_{k\in\mathcal I}|C_{j,k}(s)|^2\,\dd s
			\le
			\|\varphi\|^2_{\mathbb{H}^4(\T^2)}\|\widetilde\omega\|_{\mathbb{L}^2([0,T];\mathbb{H}^{-1}(\T^2))}^2,
			\]
			and the right-hand side is integrable. Since \(K_j\to\infty\), for a.e. \(s\in [0,T]\) the
			tail over \(k\notin\mathcal I^{K_j}\) converges to \(0\). Therefore,
			by dominated convergence, as $j\to \infty$,
			\[
			\widetilde{\mathbb E}\Bigg[
			\int_0^T\sum_{k\in\mathcal I}|C_{j,k}(s)|^2\,\dd s
			\Bigg]
			\longrightarrow0.
			\]
			Combining the three estimates proves \eqref{eq:coeff_convergence_conv}.
		\end{proof}
		
		\noindent
		
		{The equality of laws in \textup{(i)} of Proposition~\ref{prop:skorokhod_jakubowski_conv} transfers the fully discrete scheme \eqref{eq:scheme_keep} to the new probability space; see \eqref{eq:discrete_weak_form_conv}. Hence, for
			each \(j\in\mathbb N\), the process \(\widetilde{\underline\omega}_j\) satisfies
			the fully discrete scheme \eqref{eq:scheme_keep} with respect to the Brownian
			family \((\widetilde {\bf W}_k^j)_{k\in\mathcal I}\) and is adapted to the usual filtration
			\[
			\widetilde{\mathcal F}_t^j
			:=
			\sigma\bigl\{
			\widetilde {\bf W}_k^j(s):0\le s\le t,\ k\in\mathcal I
			\bigr\}
			\qquad\forall\, t\in[0,T].
			\]
			Moreover, the limiting process \(\widetilde\omega\) is adapted to the limiting
			filtration \((\widetilde{\mathcal F}_t)_{t\in[0,T]}\). Therefore, by using
			\cite[Lemma~2.1]{DebusscheGlattHoltzTemam2011} in the form stated in
			\cite[Lemma~4.3]{BagnaraMaurelliXu2025}, together with
			Lemma~\ref{lem:coeff_convergence_conv}, we obtain, as \(j\to\infty\),
			\begin{equation}\label{eq:stochasticimit_conv}
				I_j^\varphi\longrightarrow I^\varphi
				\qquad
				\text{in probability in } C([0,T];\mathbb R).
		\end{equation}}
		By combining \eqref{eq:discrete_weak_form_conv}, \eqref{eq:nonlinearimit_conv},
		\eqref{eq:itoimit_conv}, and \eqref{eq:stochasticimit_conv}, we conclude that
		for every \(\varphi\in C^\infty(\T^2)\), for every \(t\in[0,T]\),
		\(\widetilde{\mathbb P}\)-a.s.,
		\begin{equation}\label{eq:weak_formimit_conv}
			\begin{aligned}
				\langle \widetilde\omega(t),\varphi\rangle
				&=
				\langle \omega_0,\varphi\rangle
				+
				\int_0^t
				\langle \widetilde\omega(s),\widetilde{\bf u}(s)\cdot \nabla\varphi\rangle\,\dd s
				+
				\frac{c_\infty}{2}
				\int_0^t
				\langle \widetilde\omega(s),\Delta\varphi\rangle\,\dd s
				\\
				&\quad
				+
				\sum_{k\in\mathcal I}
				\int_0^t
				\langle \widetilde\omega(s)\boldsymbol{\sigma}_k,\nabla\varphi\rangle\,
				\dd\widetilde{\bf W}_k(s) .
			\end{aligned}
		\end{equation}
		\subsection{Conclusion}

		\label{subsec:conclusion_conv}
		
		Equation \eqref{eq:weak_formimit_conv} for $\omega_0\in \mathbb{H}^{-1}(\mathbb{T}^2)$ shows that the limit process
		\(\widetilde\omega\) satisfies the weak formulation of the limiting equation on the
		new probability space. Since \(\widetilde\omega\in C\bigl([0,T];\mathbb{H}^{-5}(\T^2)\bigr)\cap
		\mathbb{L}^2\bigl([0,T];\mathbb{H}^{-\alpha}(\T^2)\bigr)\), and since
		\((\widetilde{\bf W}_k)_{k\in\mathcal I}\) is a family of independent
		\((\widetilde{\mathcal F}_t)\)-Brownian motions, the system
		\[
		\bigr(\widetilde\Omega,\widetilde{\mathcal F},(\widetilde{\mathcal F}_t)_{t\in[0,T]},
		\widetilde{\mathbb P},\widetilde\omega,(\widetilde{\mathbf W}_k)_{k\in\mathcal{I}}\bigr)
		\]
		is a weak martingale solution of the limiting equation. This completes the proof of Theorem~\ref{thm:convergence_fully_discrete_martingale}.
	\end{proof}
	\begin{remark}[On the compactness strategy]
		\label{remark6.12}
		The convergence argument above uses the continuous interpolant
		\(\omega_j^\sharp\) as the main compactness variable, rather than the left and
		right piecewise--constant interpolants. This choice is natural for the present
		scheme: \(\omega_j^\sharp\) agrees with the fully discrete approximation at the
		time nodes, is continuous in time by construction, and therefore already has the
		temporal regularity required in the limiting weak martingale formulation; see
		Definition~\ref{def:weak_martingale_solution_torus_conv}. The tightness result
		in Proposition~\ref{prop:tightness_main_conv} is obtained on the path space
		\(\mathfrak X\) of \eqref{eq:full_path_space_conv}, using the localized
		time-regularity estimates from Lemma~\ref{lem:localized_holder_conv} together
		with the stopping-time localization \eqref{eq:stopping_time_conv}.
		
		The discontinuous interpolants \(\underline{\omega}_j\) and
		\(\overline{\omega}_j\) are nevertheless essential for writing the discrete weak
		form. Their role in the compactness argument is auxiliary: by
		Lemmas~\ref{lem:sharp_under_close_conv}--\ref{lem:bar_under_close_conv}, they
		are asymptotically equivalent to \(\omega_j^\sharp\) in
		\(\mathbb{L}^2([0,T];\mathbb{H}^{-4}(\T^2))\) in probability. Hence, after applying
		Proposition~\ref{prop:skorokhod_jakubowski_conv}, the three limiting
		interpolants coincide and define a single adapted process
		\[
		\widetilde{\omega}^{\sharp}
		=
		\widetilde{\underline{\omega}}
		=
		\widetilde{\overline{\omega}}
		=:
		\widetilde{\omega}.
		\]
		This allows us to identify the limiting martingale problem in the natural
		continuous path space, while still passing to the limit through the discrete weak
		form.
		
		Thus the present proof differs from the discontinuous-path compactness approach
		of \cite{OndrejatProhlWalkington2023}. Instead of relying directly on global
		bounds in a Skorokhod-type space, we combine the continuous interpolant with
		localized stopping-time estimates and the negative-Sobolev coercivity structure
		of the Kraichnan noise. This is the point at which the specific structure of the
		fully discrete scheme \eqref{eq:scheme_keep} enters the convergence proof.
	\end{remark}

	\appendix
	\section{Technical results}
	\subsection{Noise estimate}
	\label{subsec:noise_estimate_conv}
	We state the following result as a useful ingredient in the proofs of
	Lemmas~\ref{lem:localized_holder_conv}--\ref{lem:bar_under_close_conv} and
	Lemma~\ref{lem:coeff_convergence_conv}. The proof follows the same main idea as
	that of \cite[Lemma~5.3]{CoghiMaurelli2026}.
	\begin{lemma}[Noise estimate in \(\mathbb{H}^{-4}(\T^2)\)]
		\label{lem:noise_estimate_conv}
		There exists \(C=C(\alpha)\), independent of \(j\), such that for every
		mean-zero \(f\in \mathbb{H}^{-1}(\T^2)\),
		\begin{equation}\label{eq:noise_estimate_conv}
			\sum_{k\in\mathcal I^{K_j}}
			\|\mathcal{P}_j[(\boldsymbol{\sigma}_k\cdot \nabla)f]\|_{\mathbb{H}^{-4}(\T^2)}^2
			\le C\|f\|_{\mathbb{H}^{-1}(\T^2)}^2.
		\end{equation}
		Consequently, for every fixed \(\varphi\in C^4(\T^2)\),
		\begin{equation}\label{eq:noise_scalar_estimate_conv}
			\sum_{k\in\mathcal I^{K_j}}
			\big|
			\langle f\boldsymbol{\sigma}_k,\nabla\varphi\rangle
			\big|^2
			\le \|\varphi\|_{\mathbb{H}^4}^2 \|f\|_{\mathbb{H}^{-1}(\T^2)}^2.
		\end{equation}
	\end{lemma}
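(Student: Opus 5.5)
The plan is to prove \eqref{eq:noise_scalar_estimate_conv} first, by a pointwise-in-$\mathbf x$ summation, and then obtain \eqref{eq:noise_estimate_conv} from it by duality. Fix a test function $\varphi$. Because $\operatorname{div}\boldsymbol\sigma_k=0$ and $\mathcal P_j$ is self-adjoint, we have
\[
\langle \mathcal P_j[(\boldsymbol\sigma_k\cdot\nabla)f],\varphi\rangle
=-\langle f\,\boldsymbol\sigma_k,\nabla(\mathcal P_j\varphi)\rangle
=-\langle f, h_k\rangle,
\qquad h_k:=\boldsymbol\sigma_k\cdot\nabla(\mathcal P_j\varphi).
\]
Hence $|\langle f\boldsymbol\sigma_k,\nabla\psi\rangle|\le \|f\|_{\mathbb H^{-1}}\|\boldsymbol\sigma_k\cdot\nabla\psi\|_{\mathbb H^{1}}$, and the whole task reduces to the square-function bound
\[
\sum_{k\in\mathcal I}\|\boldsymbol\sigma_k\cdot\nabla\psi\|_{\mathbb H^1}^2\le C\|\psi\|_{\mathbb H^4}^2 .
\]
This bound is uniform in $K$, since the sum runs over all of $\mathcal I$.

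To prove the square-function bound, write $\|g\|_{\mathbb H^1}^2\simeq\|g\|_{\mathbb L^2}^2+\|\nabla g\|_{\mathbb L^2}^2$ (for zero-mean $g$ the $\mathbb L^2$ part is dominated anyway). Expand $\nabla(\boldsymbol\sigma_k\cdot\nabla\psi)=(\nabla\boldsymbol\sigma_k)^{\!\top}\nabla\psi+(\boldsymbol\sigma_k\cdot\nabla)\nabla\psi$. The cosine and sine modes pair up pointwise, since $\cos^2+\sin^2=1$. This gives
\[
\sum_{\ell\in\{c,s\}}|\boldsymbol\sigma_{\mathbf m,\ell}(\mathbf x)\cdot \mathbf v|^2=2q(\mathbf m)|\mathbf e_{\mathbf m}\cdot\mathbf v|^2\le 2q(\mathbf m)|\mathbf v|^2,
\]
and similarly $\sum_\ell|\nabla\boldsymbol\sigma_{\mathbf m,\ell}|^2\le 2q(\mathbf m)|\mathbf m|^2$. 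Summing over $\mathbf m$ yields
\[
\sum_k\|\boldsymbol\sigma_k\cdot\nabla\psi\|_{\mathbb H^1}^2\le C\Big(\sum_{\mathbf m}q(\mathbf m)\Big)\|\psi\|_{W^{2,2}}^2+C\Big(\sum_{\mathbf m}q(\mathbf m)|\mathbf m|^2\Big)\|\nabla\psi\|_{\mathbb L^2}^2 .
\]

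This is the step I expect to be the main obstacle. The first sum is $c_\infty<\infty$. The second sum behaves like $\sum|\mathbf m|^{-2\alpha}$, which diverges in two dimensions, so the gradient-of-noise term cannot be controlled by the naive product rule. The fix I would try is to avoid putting a full derivative on $\boldsymbol\sigma_k$: bound the $\mathbb H^1$ norm through Fourier coefficients instead. The product $\boldsymbol\sigma_{\mathbf m,\ell}\cdot\nabla\psi$ has coefficients $\sqrt{q(\mathbf m)/2}\,(\mathbf e_{\mathbf m}\cdot i(\mathbf r\mp\mathbf m))\widehat\psi(\mathbf r\mp\mathbf m)$ (as in Step 4 of the proof of Theorem~\ref{prop:finite_dimensional_coercivity}). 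Its $\mathbb H^1$ weight $|\mathbf r|^2$ with $\mathbf r=\mathbf n\pm\mathbf m$ is dangerous only when $|\mathbf m|\gg|\mathbf n|$. In that regime the key identity $\mathbf e_{\mathbf m}\cdot\mathbf n=\mathbf e_{\mathbf m}\cdot(\mathbf n\pm\mathbf m)$ does not help directly. I would instead exploit the duality on the $f$ side: pair $f$ against the $\mathbb H^{-1}$ norm only on modes $|\mathbf r|$ comparable to $|\mathbf n|$, and treat the high-$\mathbf m$ part by a weaker dual norm. If that also fails, I would accept an $\mathbb H^{-1+\epsilon}$-type loss, which is harmless for the later uses, provided $\alpha>0$ gives $\sum q(\mathbf m)|\mathbf m|^{2-2\epsilon'}$-summability with $\epsilon'>1-\alpha$.

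Once the square-function bound holds, \eqref{eq:noise_scalar_estimate_conv} follows immediately, using $\|\mathcal P_j\varphi\|_{\mathbb H^4}\le\|\varphi\|_{\mathbb H^4}$ and absorbing constants into the normalization. For \eqref{eq:noise_estimate_conv}, write
\[
\sum_k\|\mathcal P_j[(\boldsymbol\sigma_k\cdot\nabla) f]\|_{\mathbb H^{-4}}^2=\sum_k\sum_{\mathbf r}|\mathbf r|^{-8}\,|\langle \mathcal P_j[(\boldsymbol\sigma_k\cdot\nabla)f],e_{\mathbf r}\rangle|^2 .
\]
Apply the scalar bound with $\varphi=e_{\mathbf r}$, giving $\|e_{\mathbf r}\|_{\mathbb H^4}^2=|\mathbf r|^8$. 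To make this summable over $\mathbf r$, spend two additional derivatives: use the bound with $\mathbb H^2$-type control of $e_{\mathbf r}$, i.e. run the square-function estimate with $\|\psi\|_{\mathbb H^{2}}$ rather than $\|\psi\|_{\mathbb H^4}$, which the argument above actually gives. Then $\sum_{\mathbf r}|\mathbf r|^{-8}|\mathbf r|^{4}<\infty$ in two dimensions, which yields $C\|f\|_{\mathbb H^{-1}}^2$.
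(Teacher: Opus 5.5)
Your argument depends on the square-function bound $\sum_{k\in\mathcal I}\|\boldsymbol\sigma_k\cdot\nabla\psi\|_{\mathbb H^1}^2\le C\|\psi\|_{\mathbb H^4}^2$, and that bound is false. The divergence you spotted is not an artifact of the product rule. The Fourier computation you suggest (with the cosine--sine cross terms cancelling as in Step~5 of the proof of Theorem~\ref{prop:finite_dimensional_coercivity}) gives the exact identity
\[
\sum_{\ell\in\{c,s\}}\|\boldsymbol\sigma_{\mathbf m,\ell}\cdot\nabla\psi\|_{\mathbb H^1}^2
=q(\mathbf m)\sum_{\mathbf n\neq\mathbf 0}|\mathbf e_{\mathbf m}\cdot\mathbf n|^2\bigl(|\mathbf n+\mathbf m|^2+|\mathbf n-\mathbf m|^2\bigr)|\widehat\psi(\mathbf n)|^2 .
\]
Hence, for every $\mathbf n\neq\mathbf 0$, the coefficient of $|\widehat\psi(\mathbf n)|^2$ in $\sum_{k\in\mathcal I}\|\boldsymbol\sigma_k\cdot\nabla\psi\|_{\mathbb H^1}^2$ behaves like $|\mathbf n|^2\sum_{|\mathbf m|\gg|\mathbf n|}|\mathbf m|^{-2\alpha}\sin^2\angle(\mathbf m,\mathbf n)=+\infty$. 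Over $\mathcal I^{K_j}$ it grows like $K_j^{2-2\alpha}$, so it is not uniform in $j$ either.

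The loss is structural. Bounding $|\langle f,h_k\rangle|\le\|f\|_{\mathbb H^{-1}}\|h_k\|_{\mathbb H^1}$ for each $k$ and then summing replaces the operator norm of $f\mapsto\sum_k\langle f,h_k\rangle h_k$ (from $\mathbb H^{-1}$ to $\mathbb H^1$) by its trace. The operator norm is finite: for $\psi=e^{i\mathbf n\cdot\mathbf x}$, $h_{\mathbf m,\ell}$ only sees the modes $\mathbf n\pm\mathbf m$ of $f$, so different $\mathbf m$ test disjoint modes. The trace, however, is infinite.

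Your fallback does not rescue this. Square-summability of $\|h_k\|_{\mathbb H^{s}}$ requires $\sum_{\mathbf m}q(\mathbf m)|\mathbf m|^{2s}<\infty$, i.e.\ $s<\alpha$. So $f$ would have to be measured in $\mathbb H^{-s}$ with $s<\alpha$, a norm stronger than $\mathbb H^{-\alpha}$ that the energy estimates do not control at all. Lemmas~\ref{lem:localized_holder_conv}--\ref{lem:bar_under_close_conv} and~\ref{lem:coeff_convergence_conv} need precisely the $\mathbb H^{-1}$ norm: pointwise in time below the stopping time, and through strong convergence in $\mathbb L^2([0,T];\mathbb H^{-1}(\T^2))$.

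The missing idea is to exploit this orthogonality in $\mathbf m$ before any Cauchy--Schwarz step. The paper does so by proving \eqref{eq:noise_estimate_conv} directly and deducing \eqref{eq:noise_scalar_estimate_conv} from it by duality, the reverse of your order. After the cosine--sine cancellation and the substitution $\mathbf a=\mathbf r-\mathbf m$, everything reduces to the lattice bound
\[
\sum_{\mathbf m\neq\mathbf 0}q(\mathbf m)\,|\mathbf e_{\mathbf m}\cdot\mathbf a|^2\,\langle\mathbf a+\mathbf m\rangle^{-8}\le C\langle\mathbf a\rangle^{-2}.
\]
This is proved by splitting at $|\mathbf a+\mathbf m|\le|\mathbf a|/2$. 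On that region $q(\mathbf m)\lesssim\langle\mathbf a\rangle^{-2-2\alpha}$ and $|\mathbf e_{\mathbf m}\cdot\mathbf a|=|\mathbf e_{\mathbf m}\cdot(\mathbf a+\mathbf m)|\le|\mathbf a+\mathbf m|$. On the complement, $\langle\mathbf a+\mathbf m\rangle^{-8}\lesssim\langle\mathbf a\rangle^{-8}$ and $\sum_{\mathbf m}q(\mathbf m)<\infty$.

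The strong negative weight sits on the output frequency $\mathbf r=\mathbf a+\mathbf m$, which is what tames large $\mathbf m$; your $\mathbb H^1$ weight puts a growing factor there instead. Your last step (testing against $e^{i\mathbf r\cdot\mathbf x}$ and summing $|\mathbf r|^{-8}|\mathbf r|^{4}$) is fine in itself, but it presupposes a scalar bound your argument never establishes.
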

	
	\begin{proof}
		Since \(\mathcal{P}_j\) is the Fourier projection onto a finite set of modes, it is an
		orthogonal projection on every Sobolev space \(\mathbb{H}^s(\T^2)\). In particular,
		\[
		\|\mathcal{P}_j g\|_{\mathbb{H}^{-4}(\T^2)}\le \|g\|_{\mathbb{H}^{-4}(\T^2)}
		\qquad\text{for every }g\in \mathbb{H}^{-4}(\T^2).
		\]
		Therefore it is enough to prove \eqref{eq:noise_estimate_conv} without \(\mathcal{P}_j\).
		
		\medskip
		\noindent\textbf{Step 1: Fourier estimate for one mode.}
		Recall \eqref{today07}. We write the noise basis as
		\[
		\{\boldsymbol{\sigma}_{{\bf m},c},\boldsymbol{\sigma}_{{\bf m},s}\}_{m\in\Z^2\setminus\{{\bf 0}\}},
		\]
		where the cosine and sine modes correspond to the same wave-vector \(\bf m\); see \eqref{eq:noise_ modes}. For a
		fixed \({\bf m}\neq{\bf 0}\), a direct Fourier computation as done in the proof of Theorem~\ref{prop:finite_dimensional_coercivity} gives, for every \({\bf r}\in\Z^2\),
		\[
		\sum_{\ell\in\{c,s\}}
		\left|
		\widehat{(\boldsymbol{\sigma}_{m,\ell}\cdot \nabla f)}({\bf r})
		\right|^2
		\le\,C\,
		q({\bf m})
		\Big(
		|{\bf e}_{\bf m}\cdot {{\bf r}-{\bf m}}|^2\,|\widehat f({\bf r}-{\bf m})|^2
		+
		|{\bf e}_{\bf m}\cdot {{\bf r}+{\bf m}}|^2\,|\widehat f({\bf r}+{\bf m})|^2
		\Big).
		\]
		Here we used that summing the
		cosine and sine contributions cancels the cross terms. Hence
		\[
		\sum_{k\in\mathcal I^{K_j}}
		\|(\boldsymbol{\sigma}_k\cdot \nabla f)\|_{\mathbb{H}^{-4}(\T^2)}^2
		\le\,C\,
		\Sigma_1+\Sigma_2,
		\]
		where
		\[
		\Sigma_1
		:=
		\sum_{{\bf r}\in\Z^2}
		\sum_{{\bf m}\neq{\bf 0}}
		\langle {\bf r}\rangle^{-8}
		q({\bf m})
		|{\bf e}_{\bf m}\cdot {{\bf r}-{\bf m}}|^2
		|\widehat f({\bf r}-{\bf m})|^2,
		\]
		and
		\[
		\Sigma_2
		:=
		\sum_{{\bf r}\in\Z^2}
		\sum_{{\bf m}\neq{\bf 0}}
		\langle {\bf r}\rangle^{-8}
		q({\bf m})
		|{\bf e}_{\bf m}\cdot {{\bf r}+{\bf m}}|^2
		|\widehat f({\bf r}+{\bf m})|^2.
		\]
		The two terms are identical after the change of variable \({\bf m}\mapsto -{\bf m}\), so it is
		enough to estimate \(\Sigma_1\). We set
		\[
		{\bf a}:={\bf r}-{\bf m}.
		\]
		Then
		\[
		\Sigma_1
		=
		\sum_{{\bf a}\in\Z^2}
		|\widehat f({\bf a})|^2
		\sum_{{\bf m}\neq{\bf 0}}
		q({\bf m})\,
		|{\bf e}_{\bf m}\cdot {\bf a}|^2\,
		\langle{{\bf a}+{\bf m}}\rangle^{-8}.
		\]
		Thus it remains to prove that
		\begin{equation}\label{eq:lattice_estimate_conv_clean}
			\sum_{{\bf m}\neq{\bf 0}}
			q({\bf m})\,
			|{\bf e}_{\bf m}\cdot {\bf a}|^2\,
			\langle{{\bf a}+{\bf m}}\rangle^{-8}
			\le\,C\,
			\langle {\bf a}\rangle^{-2}
			\qquad\text{for every }{\bf a}\in\Z^2.
		\end{equation}
		
		\medskip
		\noindent\textbf{Step 2: proof of the lattice estimate.}
		Fix \({\bf a}\in\Z^2\). We split the sum into
		\[
		A_{\bf a}:=\{{\bf m}\neq{\bf 0}:\ |{{\bf a}+{\bf m}}|\le |{\bf a}|/2\},
		\qquad
		B_{\bf a}:=\{{\bf m}\neq{\bf 0}:\ |{{\bf a}+{\bf m}}|> |{\bf a}|/2\}.
		\]
		
		\smallskip
		\noindent\textbf{Contribution of \(A_{\bf a}\).}
		If \({\bf m}\in A_{\bf a}\), then
		\[
		|{\bf m}|
		=
		|{({\bf a}+{\bf m})-{\bf a}}|
		\ge
		|{\bf a}|-|{{\bf a}+{\bf m}}|
		\ge
		\frac{|{\bf a}|}{2},
		\]
		and also
		\[
		|{\bf m}|
		\le
		|{\bf a}|+|{{\bf a}+{\bf m}}|
		\le
		\frac{3}{2}|{\bf a}|.
		\]
		Hence
		\[
		\langle {\bf m}\rangle\sim \langle {\bf a}\rangle,
		\qquad
		q({\bf m})\le\,C\, \langle {\bf a}\rangle^{-2-2\alpha}.
		\]
		Moreover, since \({\bf e}_{\bf m}\cdot {\bf m}=0\),
		\[
		{\bf e}_{\bf m}\cdot {\bf a}
		=
		{\bf e}_{\bf m}\cdot ({\bf a}+{\bf m})
		=
		{\bf e}_{\bf m}\cdot {{\bf a}+{\bf m}},
		\]
		and therefore
		\[
		|{\bf e}_{\bf m}\cdot {\bf a}|
		\le
		|{{\bf a}+{\bf m}}|.
		\]
		Thus
		\[
		\sum_{m\in A_{\bf a}}
		q({\bf m})\,
		|{\bf e}_{\bf m}\cdot {\bf a}|^2\,
		\langle{{\bf a}+{\bf m}}\rangle^{-8}
		\le\,C\,
		\langle {\bf a}\rangle^{-2-2\alpha}
		\sum_{m\in A_{\bf a}}
		|{{\bf a}+{\bf m}}|^2\langle{{\bf a}+{\bf m}}\rangle^{-8}.
		\]
		Now make the change of variable \({\bf n}={\bf a}+{\bf m}\). Since
		\[
		\sum_{n\in\Z^2}
		|{\bf n}|^2\langle {\bf n}\rangle^{-8}<\infty,
		\]
		we obtain
		\[
		\sum_{m\in A_{\bf a}}
		q({\bf m})\,
		|{\bf e}_{\bf m}\cdot {\bf a}|^2\,
		\langle{{\bf a}+{\bf m}}\rangle^{-8}
		\le\,C\,
		\langle {\bf a}\rangle^{-2-2\alpha}
		\le\,C\,
		\langle {\bf a}\rangle^{-2}.
		\]
		
		\smallskip
		\noindent\textbf{Contribution of \(B_{\bf a}\).}
		If \({\bf m}\in B_{\bf a}\), then \(|{{\bf a}+{\bf m}}|>|{\bf a}|/2\), so
		\[
		\langle{{\bf a}+{\bf m}}\rangle^{-8}
		\le\,C\,
		\langle {\bf a}\rangle^{-8}.
		\]
		Also,
		\[
		|{\bf e}_{\bf m}\cdot {\bf a}|\le |{\bf a}|.
		\]
		Therefore
		\[
		\sum_{m\in B_{\bf a}}
		q({\bf m})\,
		|{\bf e}_{\bf m}\cdot {\bf a}|^2\,
		\langle{{\bf a}+{\bf m}}\rangle^{-8}
		\le\,C\,
		|{\bf a}|^2\langle {\bf a}\rangle^{-8}
		\sum_{{\bf m}\neq{\bf 0}} q({\bf m}).
		\]
		Since \(\alpha>0\), the series \(\sum_{{\bf m}\neq{\bf 0}}q({\bf m})\) converges, and hence
		\[
		\sum_{m\in B_{\bf a}}
		q({\bf m})\,
		|{\bf e}_{\bf m}\cdot {\bf a}|^2\,
		\langle{{\bf a}+{\bf m}}\rangle^{-8}
		\le\,C\,
		\langle {\bf a}\rangle^{-6}
		\le\,C\,
		\langle {\bf a}\rangle^{-2}.
		\]
		By combining the estimates on \(A_{\bf a}\) and \(B_{\bf a}\), we obtain
		\eqref{eq:lattice_estimate_conv_clean}.
		
		\medskip
		\noindent\textbf{Step 3: conclusion of the \(\mathbb{H}^{-4}(\T^2)\) estimate.}
		Substituting \eqref{eq:lattice_estimate_conv_clean} into the expression for
		\(\Sigma_1\), we get
		\[
		\Sigma_1
		\le\,C\,
		\sum_{{\bf a}\in\Z^2}
		\langle {\bf a}\rangle^{-2}
		|\widehat f({\bf a})|^2
		=
		\|f\|_{\mathbb{H}^{-1}(\T^2)}^2.
		\]
		The same bound holds for \(\Sigma_2\). Therefore
		\begin{align}\label{todaynew}
			\sum_{k\in\mathcal I^{K_j}}
			\|(\boldsymbol{\sigma}_k\cdot \nabla f)\|_{\mathbb{H}^{-4}(\T^2)}^2
			\le\,C\,
			\|f\|_{\mathbb{H}^{-1}(\T^2)}^2.
		\end{align}
		Since \(\|\mathcal{P}_j g\|_{\mathbb{H}^{-4}(\T^2)}\le \|g\|_{\mathbb{H}^{-4}(\T^2)}\), this proves
		\eqref{eq:noise_estimate_conv}.
		
		\medskip
		\noindent\textbf{Step 4: scalar estimate.}
		Let \(\varphi\in C^4(\T^2)\) be fixed. Since \(\mathrm{div}\boldsymbol{\sigma}_k=0\), we have
		\[
		\langle f\boldsymbol{\sigma}_k,\nabla\varphi\rangle
		=
		-\langle (\boldsymbol{\sigma}_k\cdot \nabla)f,\varphi\rangle .
		\]
		Because 
		\[
		\big|
		\langle f\boldsymbol{\sigma}_k,\nabla\varphi\rangle
		\big|
		\le
		\|(\boldsymbol{\sigma}_k\cdot \nabla)f]\|_{\mathbb{H}^{-4}(\T^2)}\,
		\|\varphi\|_{\mathbb{H}^4}.
		\]
		Squaring and summing over \(k\in\mathcal I^{K_j}\), and then using
		\eqref{todaynew}, we obtain
		\[
		\sum_{k\in\mathcal I^{K_j}}
		\big|
		\langle f\boldsymbol{\sigma}_k,\nabla\varphi\rangle
		\big|^2
		\le
		\|\varphi\|_{\mathbb{H}^4}^2
		\sum_{k\in\mathcal I^{K_j}}
		\|(\boldsymbol{\sigma}_k\cdot \nabla)f\|_{\mathbb{H}^{-4}(\T^2)}^2\le\,C\,\|\varphi\|_{\mathbb{H}^4}^2\|f\|_{\mathbb{H}^{-1}(\T^2)}^2.
		\]
		This proves \eqref{eq:noise_scalar_estimate_conv}.
	\end{proof}
	
	\noindent
	We state the following result as a useful ingredient in the proof of
	Theorem~\ref{thm:strong_energy_keep}. The proof follows the same main idea as
	that of \cite[Lemma~5.1]{CoghiMaurelli2026}.
	\begin{lemma}[Discrete martingale coefficient estimate]
		\label{lem:discrete_martingale_coeff_keep}
		Let \(\omega\in\mathbb{V}_N\) be mean-zero, and set
		\[
		\psi:=(-\Delta)^{-1}\omega,
		\qquad
		{\bf u}:=\nabla^\perp\psi=\nabla^\perp(-\Delta)^{-1}\omega.
		\]
		Then there exists a constant \(C>0\), depending only on \(\alpha\), such that
		\begin{equation}\label{eq:discrete_martingale_coeff_keep}
			\sum_{k\in\mathcal I^K}
			\Big|
			\big\langle \PN[(\boldsymbol{\sigma}_k\cdot \nabla)\omega],\psi\big\rangle
			\Big|^2
			\le
			C\,\|\omega\|_{\mathbb{H}^{-1}(\T^2)}^2\,\|\omega\|_{\mathbb{H}^{-\alpha}(\T^2)}^2.
		\end{equation}
	\end{lemma}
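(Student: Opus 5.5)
The strategy is to remove the projector, rewrite each pairing as a quadratic expression in $\mathbf u=\nabla^\perp\psi$ tested against $\nabla\boldsymbol\sigma_k$, and then sum over the cosine/sine pair for each wave vector. This sum should reduce to a weighted $\mathbb H^{-\alpha}$-norm of the tensor $\nabla\psi\otimes\nabla\psi$.

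\textbf{Step 1: remove the projector.} Since $\omega\in\mathbb V_N$, we also have $\psi=(-\Delta)^{-1}\omega\in\mathbb V_N$. Because $\PN$ is self-adjoint, $\langle \PN[(\boldsymbol\sigma_k\cdot\nabla)\omega],\psi\rangle=\langle(\boldsymbol\sigma_k\cdot\nabla)\omega,\psi\rangle$, exactly as in Lemma~\ref{lem:conv_cancel_keep}. It therefore suffices to bound the unprojected sum, and we may even extend it over all $k\in\mathcal I$.

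\textbf{Step 2: symmetric rewriting.} Write $\omega=-\Delta\psi$ and integrate by parts, using $\operatorname{div}\boldsymbol\sigma_k=0$. This gives
\[
\langle(\boldsymbol\sigma_k\cdot\nabla)\omega,\psi\rangle=-\langle \omega,\boldsymbol\sigma_k\cdot\nabla\psi\rangle=\langle\Delta\psi,\boldsymbol\sigma_k\cdot\nabla\psi\rangle
=-\int_{\T^2}\partial_i\psi\,\partial_i\sigma_k^j\,\partial_j\psi\,\dd\mathbf x .
\]
The term $\int \sigma^j_k\,\partial_i\psi\,\partial_{ij}\psi=\tfrac12\int\boldsymbol\sigma_k\cdot\nabla|\nabla\psi|^2$ vanishes by the divergence-free property. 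Thus only the full first derivative of $\boldsymbol\sigma_k$ enters, and it meets the product $G:=\nabla\psi\otimes\nabla\psi$.

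For $k=(\mathbf m,c)$ we have $\nabla\boldsymbol\sigma_{\mathbf m,c}=-\sqrt{2q(\mathbf m)}\,\mathbf e_{\mathbf m}\otimes\mathbf m\,\sin(\mathbf m\cdot\mathbf x)$, and analogously with $\cos$ for $\ell=s$. Hence the pairing equals $\sqrt{2q(\mathbf m)}$ times the $\sin$ (respectively $\cos$) coefficient of the scalar $g_{\mathbf m}:=(\mathbf e_{\mathbf m}\cdot\nabla\psi)(\mathbf m\cdot\nabla\psi)$.

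\textbf{Step 3: sum over the cosine/sine pair.} Summing the squares of the $\sin$ and $\cos$ coefficients reproduces $|\widehat{g_{\mathbf m}}(\mathbf m)|^2+|\widehat{g_{\mathbf m}}(-\mathbf m)|^2$ up to a factor $(2\pi)^4$. This is the same cross-term cancellation as in Step~5 of the proof of Theorem~\ref{prop:finite_dimensional_coercivity}. Since $|\widehat{g_{\mathbf m}}(\pm\mathbf m)|\le|\mathbf m|\,|\widehat G(\pm\mathbf m)|$, we obtain
\[
\sum_{k\in\mathcal I}\big|\langle(\boldsymbol\sigma_k\cdot\nabla)\omega,\psi\rangle\big|^2\le C\sum_{\mathbf m\ne\mathbf 0}q(\mathbf m)|\mathbf m|^2|\widehat G(\mathbf m)|^2\le C\sum_{\mathbf m\neq \mathbf 0}|\mathbf m|^{-2\alpha}|\widehat G(\mathbf m)|^2\le C\|G\|^2_{\mathbb H^{-\alpha}} .
\]
Here we used $q(\mathbf m)|\mathbf m|^2\le|\mathbf m|^{-2\alpha}$.

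\textbf{Step 4: product estimate.} Note that $|\nabla\psi|=|\mathbf u|$. It remains to show $\|\mathbf u\otimes\mathbf u\|_{\mathbb H^{-\alpha}}\le C\|\mathbf u\|_{\mathbb L^2}\|\mathbf u\|_{\mathbb H^{1-\alpha}}$. Argue by duality. For $h\in\mathbb H^{\alpha}$, use the 2D Sobolev embeddings $\mathbb H^\alpha\hookrightarrow\mathbb L^{2/(1-\alpha)}$ and $\mathbb H^{1-\alpha}\hookrightarrow\mathbb L^{2/\alpha}$, valid for $0<\alpha<1$ (inhomogeneous norms, with the zero mode of $h$ harmless). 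Then apply H\"older with exponents $\tfrac12+\tfrac{1-\alpha}{2}+\tfrac\alpha2=1$. Finally, Biot--Savart gives $\|\mathbf u\|_{\mathbb L^2}=\|\omega\|_{\mathbb H^{-1}}$ and $\|\mathbf u\|_{\mathbb H^{1-\alpha}}\le C\|\omega\|_{\mathbb H^{-\alpha}}$, which yields \eqref{eq:discrete_martingale_coeff_keep}.

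The main obstacle I expect is Step~3. One has to track the Fourier bookkeeping so that the $\sin/\cos$ sum really collapses to the coefficient of $g_{\mathbf m}$ at $\pm\mathbf m$. It is also essential that exactly one factor $|\mathbf m|$ is produced: it comes from $\nabla\boldsymbol\sigma_k$, and the $\mathbf e_{\mathbf m}$ factor has unit size. This is what makes the weight $q(\mathbf m)|\mathbf m|^2\sim|\mathbf m|^{-2\alpha}$ summable in the right norm. Without the symmetrization of Step~2, a direct estimate would lose a derivative.
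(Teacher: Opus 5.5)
Your proposal is correct and follows the same route as the paper: drop $\mathcal P_N$ by self-adjointness, integrate by parts to the symmetric form $-\int\partial_i\psi\,\partial_i\sigma_k^j\,\partial_j\psi$, bound the sum over $k$ by an $\mathbb H^{-\alpha}$-norm of the quadratic tensor using $|\mathbf m|^2q(\mathbf m)\le|\mathbf m|^{-2\alpha}$, and finish with Sobolev embedding, H\"older, and Biot--Savart. The differences are only presentational: you pair the cosine and sine modes explicitly at each wave vector, where the paper uses the covariance kernel $\Gamma_{ij\ell m}$ and Plancherel, and you work with $\nabla\psi\otimes\nabla\psi$ directly, skipping the paper's velocity-form rewriting via $-J^\top AJ=A^\top$ (unnecessary, since $\nabla\psi$ is a pointwise rotation of $\mathbf u$).
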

	
	\begin{proof}
		Since \(\psi\in\mathcal V_N\) and \(\mathcal{P}_N\) is self-adjoint on \(\mathbb{L}^2(\T^2)\), we may
		drop the projection inside the pairing:
		\[
		\big\langle \PN[(\boldsymbol{\sigma}_k\cdot \nabla)\omega],\psi\big\rangle
		=
		\big\langle (\boldsymbol{\sigma}_k\cdot \nabla)\omega,\psi\big\rangle .
		\]
		Thus it is enough to estimate the right-hand side.
		
		\medskip
		\noindent\textbf{Step 1: rewrite the coefficient in velocity form.}
		Because \(\omega=-\Delta\psi\) and \(\mathrm{div}\boldsymbol{\sigma}_k=0\), we have
		\[
		\begin{aligned}
			\big\langle (\boldsymbol{\sigma}_k\cdot \nabla)\omega,\psi\big\rangle
			&=
			-\big\langle \omega,\boldsymbol{\sigma}_k\cdot \nabla\psi\big\rangle
			\\
			&=
			\big\langle \Delta\psi,\boldsymbol{\sigma}_k\cdot \nabla\psi\big\rangle
			\\
			&=
			-\sum_{i=1}^2\int_{\T^2}
			\partial_i\psi\,\partial_i(\boldsymbol{\sigma}_k\cdot \nabla\psi)\,\dd {\bf x} .
		\end{aligned}
		\]
		Expanding the derivative,
		\[
		\partial_i(\boldsymbol{\sigma}_k\cdot \nabla\psi)
		=
		\sum_{j=1}^2
		(\partial_i\boldsymbol{\sigma}_k^{j})\,\partial_j\psi
		+
		\sum_{j=1}^2
		\boldsymbol{\sigma}_k^{j}\,\partial_{ij}\psi .
		\]
		Hence
		\[
		\begin{aligned}
			\big\langle (\boldsymbol{\sigma}_k\cdot \nabla)\omega,\psi\big\rangle
			&=
			-\sum_{i,j=1}^2
			\int_{\T^2}
			\partial_i\psi\,(\partial_i\boldsymbol{\sigma}_k^{j})\,\partial_j\psi\,\dd {\bf x}
			\\
			&\quad
			-\sum_{i,j=1}^2
			\int_{\T^2}
			\partial_i\psi\,\boldsymbol{\sigma}_k^{j}\,\partial_{ij}\psi\,\dd {\bf x} .
		\end{aligned}
		\]
		The second term vanishes, since
		\[
		\sum_{i,j=1}^2
		\int_{\T^2}
		\partial_i\psi\,\boldsymbol{\sigma}_k^{j}\,\partial_{ij}\psi\,\dd {\bf x}
		=
		\frac12\int_{\T^2}\boldsymbol{\sigma}_k\cdot \nabla(|\nabla\psi|^2)\,\dd {\bf x}
		=0
		\]
		by periodicity and \(\mathrm{div}\boldsymbol{\sigma}_k=0\). Therefore
		\begin{equation}\label{eq:coeff_grad_form_keep_clean}
			\big\langle (\boldsymbol{\sigma}_k\cdot \nabla)\omega,\psi\big\rangle
			=
			-\sum_{i,j=1}^2
			\int_{\T^2}
			\partial_i\psi\,(\partial_i\boldsymbol{\sigma}_k^{j})\,\partial_j\psi\,\dd {\bf x} .
		\end{equation}
		Now write
		\[
		{\bf u}=\nabla^\perp\psi,
		\qquad
		J:=
		\begin{pmatrix}
			0 & 1\\
			-1 & 0
		\end{pmatrix},
		\]
		so that
		\[
		\nabla\psi=-J{\bf u}.
		\]
		Let \(A:=D\boldsymbol{\sigma}_k\). Since \(\mathrm{div}\boldsymbol{\sigma}_k=0\), we have \(\operatorname{tr}A=0\).
		For any \(2\times2\) trace-free matrix \(A\), one checks directly that
		\begin{equation}\label{eq:matrix_identity_keep_clean}
			-J^\top A J=A^\top .
		\end{equation}
		Using \eqref{eq:coeff_grad_form_keep_clean}, \(\nabla\psi=Ju\), and
		\eqref{eq:matrix_identity_keep_clean}, we obtain
		\[
		\begin{aligned}
			\big\langle (\boldsymbol{\sigma}_k\cdot \nabla)\omega,\psi\big\rangle
			&=
			-\int_{\T^2} (J{\bf u})\cdot  A(J{\bf u})\,\dd {\bf x}
			\\
			&=
			\int_{\T^2} {\bf u}\cdot  A^\top {\bf u}\,\dd {\bf x}
			\\
			&=
			\big\langle {\bf u},(D\boldsymbol{\sigma}_k)^\top {\bf u}\big\rangle .
		\end{aligned}
		\]
		Thus
		\begin{equation}\label{eq:coeff_velocity_form_keep_clean}
			\big\langle \PN[(\boldsymbol{\sigma}_k\cdot \nabla)\omega],\psi\big\rangle
			=
			\big\langle {\bf u},(D\boldsymbol{\sigma}_k)^\top u\big\rangle .
		\end{equation}
		
		\medskip
		\noindent\textbf{Step 2: estimate the square sum over \(k\).}
		Define
		\[
		S({\bf x}):={\bf u}({\bf x})\otimes {\bf u}({\bf x}),
		\qquad
		S_{ij}({\bf x})=u^i({\bf x})u^j({\bf x}).
		\]
		By \eqref{eq:coeff_velocity_form_keep_clean},
		\[
		\big\langle \PN[(\boldsymbol{\sigma}_k\cdot \nabla)\omega],\psi\big\rangle
		=
		\big\langle {\bf u},(D\boldsymbol{\sigma}_k)^\top {\bf u}\big\rangle
		=
		\sum_{i,j=1}^2
		\int_{\T^2} S_{ij}({\bf x})\,\partial_i\boldsymbol{\sigma}_k^{j}({\bf x})\,\dd {\bf x} .
		\]
		Hence
		\[
		\sum_{k\in\mathcal I^K}
		\Big|
		\big\langle \PN[(\boldsymbol{\sigma}_k\cdot \nabla)\omega],\psi\big\rangle
		\Big|^2
		\le
		\sum_{k\in\mathcal I}
		\big|\langle {\bf u},(D\boldsymbol{\sigma}_k)^\top {\bf u}\rangle\big|^2 .
		\]
		Expanding the square and using Fubini's theorem, we obtain
		\[
		\begin{aligned}
			\sum_{k\in\mathcal I}
			\big|\langle {\bf u},(D\boldsymbol{\sigma}_k)^\top {\bf u}\rangle\big|^2
			&=
			\sum_{k\in\mathcal I}
			\sum_{i,j,\ell, m=1}^2
			\iint_{\T^2\times\T^2}
			S_{ij}({\bf x})\,S_{\ell m}({\bf y})\,
			\partial_i\boldsymbol{\sigma}_k^{j}({\bf x})\,
			\partial_\ell\boldsymbol{\sigma}_k^{m}({\bf y})\,\dd {\bf x}\,\dd {\bf y}
			\\
			&=
			\sum_{i,j,\ell, m=1}^2
			\iint_{\T^2\times\T^2}
			S_{ij}({\bf x})\,S_{\ell m}({\bf y})\,
			\Gamma_{ij\ell m}({\bf x}-{\bf y})\,\dd {\bf x}\,\dd {\bf  y},
		\end{aligned}
		\]
		where
		\[
		Q_{jm}({\bf x}-{\bf y})
		:=
		\sum_{k\in\mathcal I}
		\boldsymbol{\sigma}_k^{j}({\bf x})\boldsymbol{\sigma}_k^{m}({\bf y}),
		\]
		and
		\[
		\Gamma_{ij\ell m}({\bf x}-{\bf y})
		:=
		\sum_{k\in\mathcal I}
		\partial_i\boldsymbol{\sigma}_k^{j}({\bf x})\,\partial_\ell\boldsymbol{\sigma}_k^{m}({\bf y}).
		\]
		Since \(Q_{jm}\) depends only on \({\bf x}-{\bf y}\), so does \(\Gamma_{ij\ell m}\), and
		\[
		\Gamma_{ij\ell m}({\bf x}-{\bf y})
		=
		\partial_i^x\partial_\ell^y Q_{jm}({\bf x}-{\bf y})
		=
		-\,\partial_{z_i}\partial_{z_\ell}Q_{jm}({\bf z}),
		\qquad {\bf z}={\bf x}-{\bf y}.
		\]
		Therefore
		\[
		\widehat{\Gamma_{ij\ell m}}({\bf n})
		=
		n^in^\ell\,\widehat{Q_{jm}}({\bf n}).
		\]
		Now
		\[
		\widehat{Q_{jm}}({\bf n})
		=
		q({\bf n})
		\left(
		\delta_{jm}-\frac{n^jn^m}{|{\bf n}|^2}
		\right),
		\qquad {\bf n}\neq{\bf 0},
		\]
		and hence
		\[
		\big|\widehat{\Gamma_{ij\ell m}}({\bf n})\big|
		\le
		C\,|{\bf n}|^2\,q({\bf n})
		\le
		C\,\langle {\bf n}\rangle^{-2\alpha},
		\]
		because
		\[
		q({\bf n})=\langle {\bf n}\rangle^{-(2+2\alpha)}
		\qquad\text{and}\qquad
		|{\bf n}|^2\le \langle {\bf n}\rangle^2.
		\]

		\noindent
		For each fixed \(i,j,\ell\), the bilinear form on the right is a convolution
		pairing. By Plancherel's theorem on \(\T^2\), there exists a constant
		\(C>0\), depending only on the Fourier normalization on \(\T^2\), such
		that
		\[
		\iint_{\T^2\times\T^2}
		S_{ij}({\bf x})\,S_{\ell m}({\bf y})\,
		\Gamma_{ij\ell m}({\bf x}-{\bf y})\,\dd {\bf x}\,\dd {\bf  y}
		=
		C
		\sum_{n\in\Z^2}
		\widehat{\Gamma_{ij\ell m}}({\bf n})\,
		\widehat{S_{ij}}({\bf n})\,
		\overline{\widehat{S_{\ell m}}({\bf n})}.
		\]
		Hence, using the multiplier bound
		\[
		\big|\widehat{\Gamma_{ij\ell m}}({\bf n})\big|
		\le
		C\,\langle {\bf n}\rangle^{-2\alpha},
		\]
		we obtain
		\[
		\begin{aligned}
			&\Bigg|
			\iint_{\T^2\times\T^2}
			S_{ij}({\bf x})\,S_{\ell m}({\bf y})\,
			\Gamma_{ij\ell m}({\bf x}-{\bf y})\,\dd {\bf x}\,\dd {\bf  y}
			\Bigg|
			\\
			&\qquad\le
			C
			\sum_{n\in\Z^2}
			\langle {\bf n}\rangle^{-2\alpha}
			\big|\widehat{S_{ij}}({\bf n})\big|
			\big|\widehat{S_{\ell m}}({\bf n})\big|.
		\end{aligned}
		\]
		By applying Young's inequality \(ab\le \frac12 a^2+\frac12 b^2\) gives
		\[
		\begin{aligned}
			&\Bigg|
			\iint_{\T^2\times\T^2}
			S_{ij}({\bf x})\,S_{\ell m}({\bf y})\,
			\Gamma_{ij\ell m}({\bf x}-{\bf y})\,\dd {\bf x}\,\dd {\bf  y}
			\Bigg|
			\\
			&\qquad\le
			C
			\sum_{n\in\Z^2}
			\langle {\bf n}\rangle^{-2\alpha}
			\Big(
			|\widehat{S_{ij}}({\bf n})|^2+|\widehat{S_{\ell m}}({\bf n})|^2
			\Big).
		\end{aligned}
		\]
		Now summing over \(i,j,\ell, m\in\{1,2\}\), and absorbing the finite number of index
		combinations into the constant, we conclude that
		\[
		\sum_{k\in\mathcal I}
		\big|\langle {\bf u},(D\boldsymbol{\sigma}_k)^\top {\bf u}\rangle\big|^2
		\le
		C
		\sum_{i,j=1}^2
		\sum_{n\in\Z^2}
		\langle {\bf n}\rangle^{-2\alpha}
		|\widehat{S_{ij}}({\bf n})|^2.
		\]
		Since \(S={\bf u}\otimes {\bf u}\), and since the \(\mathbb{H}^{-\alpha}\)-norm of a
		matrix-valued field is defined componentwise, the right-hand side is exactly
		comparable to
		\[
		\|{\bf u}\otimes {\bf u}\|_{\mathbb{H}^{-\alpha}(\T^2)}^2.
		\]
		Therefore
		\[
		\sum_{k\in\mathcal I}
		\big|\langle {\bf u},(D\boldsymbol{\sigma}_k)^\top {\bf u}\rangle\big|^2
		\le
		C\,\|{\bf u}\otimes {\bf u}\|_{\mathbb{H}^{-\alpha}(\T^2)}^2.
		\]
		\medskip
		\noindent\textbf{Step 3: estimate \({\bf u}\otimes {\bf u}\) in \(\mathbb{H}^{-\alpha}\).}
		Since \(0<\alpha<1\), Sobolev embedding on \(\T^2\) yields
		\[
		\mathbb{H}^{-\alpha}(\T^2)\hookleftarrow
		\mathbb{L}^{2/(1+\alpha)}(\T^2),
		\qquad
		\mathbb{H}^{1-\alpha}(\T^2)\hookrightarrow
		\mathbb{L}^{2/\alpha}(\T^2).
		\]
		Hence
		\[
		\|{\bf u}\otimes {\bf u}\|_{\mathbb{H}^{-\alpha}(\T^2)}
		\le\,C\,
		\|{\bf u}\otimes {\bf u}\|_{\mathbb{L}^{2/(1+\alpha)}}
		\le
		\|{\bf u}\|_{ \mathbb{L}^2(\T^2)}\,\|{\bf u}\|_{\mathbb{L}^{2/\alpha}}
		\le\,C\,
		\|{\bf u}\|_{ \mathbb{L}^2(\T^2)}\,\|{\bf u}\|_{\mathbb{H}^{1-\alpha}}.
		\]
		Since \({\bf u}=\nabla^\perp(-\Delta)^{-1}\omega\) and \(\omega\) has zero mean, the
		Biot--Savart operator is an isomorphism of order \(-1\), and therefore
		\[
		\|{\bf u}\|_{\mathbb{L}^2(\T^2)}
		\simeq
		\|\omega\|_{\mathbb{H}^{-1}(\T^2)},
		\qquad
		\|{\bf u}\|_{\mathbb{H}^{1-\alpha}(\T^2)}
		\simeq
		\|\omega\|_{\mathbb{H}^{-\alpha}(\T^2)}.
		\]
		Consequently,
		\[
		\|{\bf u}\otimes {\bf u}\|_{\mathbb{H}^{-\alpha}(\T^2)}
		\le\,C\,
		\|\omega\|_{\mathbb{H}^{-1}(\T^2)}
		\|\omega\|_{\mathbb{H}^{-\alpha}(\T^2)}.
		\]
		By combining the estimates from Steps 2 and 3 with
		\eqref{eq:coeff_velocity_form_keep_clean}, we conclude that
		\[
		\sum_{k\in\mathcal I^K}
		\Big|
		\big\langle \PN[(\boldsymbol{\sigma}_k\cdot \nabla)\omega],\psi\big\rangle
		\Big|^2
		\le
		C\,\|\omega\|_{\mathbb{H}^{-1}(\T^2)}^2\,\|\omega\|_{\mathbb{H}^{-\alpha}(\T^2)}^2.
		\]
		This proves \eqref{eq:discrete_martingale_coeff_keep}.
	\end{proof}
	\section*{Data availability statement}
	
	No datasets were generated or analyzed during the current study.


\begin{thebibliography}{99}
		
		\bibitem{BardosTadmor2015}
		C.~Bardos and E.~Tadmor,
		\newblock Stability and spectral convergence of Fourier method for nonlinear problems:
		on the shortcomings of the \(2/3\) de-aliasing method,
		\newblock {\em Numerische Mathematik} \textbf{129} (2015), 749--782.
		
		\bibitem{BagnaraMaurelliXu2025}
		M.~Bagnara, M.~Maurelli and F.~Xu.
		\textit{No blow-up by nonlinear It\^o noise for the Euler equations}.
		\textit{Electronic Journal of Probability} \textbf{30} (2025), 1--29.
		
		\bibitem{BrzezniakFlandoliMaurelli2016}
		Z.~Brze\'zniak, F.~Flandoli and M.~Maurelli,
		\newblock Existence and uniqueness for stochastic 2D Euler flows with bounded vorticity,
		\newblock {\em Archive for Rational Mechanics and Analysis} \textbf{221} (2016), 107--142.
		
		\bibitem{BrzezniakMaurelli2019}
		Z.~Brze\'zniak and M.~Maurelli.
		\textit{Existence for stochastic 2D Euler equations with positive \(H^{-1}\) vorticity}.
		\textit{Stochastics and Partial Differential Equations: Analysis and Computations},
		2026.
		
		\bibitem{CoghiMaurelli2026}
		M.~Coghi and M.~Maurelli,
		\newblock Existence and uniqueness by {K}raichnan noise for 2D {E}uler equations with unbounded vorticity,
		\newblock {\em Nonlinearity} \textbf{39} (2026), no.~5, 055003.
		
		\bibitem{DebusscheGlattHoltzTemam2011}
		A.~Debussche, N.~Glatt-Holtz and R.~Temam.
		\textit{Local martingale and pathwise solutions for an abstract fluids model}.
		\textit{Physica D: Nonlinear Phenomena} \textbf{240} (2011), no.~14--15,
		1123--1144.
		
		\bibitem{Delort1991}
		J.-M.~Delort,
		\newblock Existence de nappes de tourbillon en dimension deux,
		\newblock {\em Journal of the American Mathematical Society} \textbf{4} (1991), no.~3, 553--586.
		
		\bibitem{DiPernaMajda1987}
		R.~J. DiPerna and A.~J. Majda,
		\newblock Concentrations in regularizations for 2D incompressible flow,
		\newblock {\em Communications on Pure and Applied Mathematics} \textbf{40} (1987), no.~3, 301--345.
		
		\bibitem{DiPernaMajda1988}
		R.~J. DiPerna and A.~J. Majda,
		\newblock Reduced Hausdorff dimension and concentration-cancellation for two-dimensional incompressible flow,
		\newblock {\em Journal of the American Mathematical Society} \textbf{1} (1988), no.~1, 59--95.
		
		\bibitem{FlandoliGaleatiLuo2024}
		F.~Flandoli, L.~Galeati and D.~Luo,
		\newblock Quantitative convergence rates for scaling limit of {SPDE}s with transport noise,
		\newblock {\em Journal of Differential Equations} \textbf{394} (2024), 237--277.
		
		\bibitem{FlandoliHuang2023}
		F.~Flandoli and R.~Huang,
		\newblock Noise based on vortex structures in 2D and 3D,
		\newblock {\em Journal of Mathematical Physics} \textbf{64} (2023), no.~5, 053101.
		
		\bibitem{FlandoliLuo2020}
		F.~Flandoli and D.~Luo,
		\newblock Convergence of transport noise to {O}rnstein--{U}hlenbeck for 2D {E}uler equations under the enstrophy measure,
		\newblock {\em The Annals of Probability} \textbf{48} (2020), no.~1, 264--295.
		
		\bibitem{FlandoliPappalettera2021}
		F.~Flandoli and U.~Pappalettera,
		\newblock 2D {E}uler equations with {S}tratonovich transport noise as a large-scale stochastic model reduction,
		\newblock {\em Journal of Nonlinear Science} \textbf{31} (2021), Article~24.
		
		\bibitem{Galeati2020}
		L.~Galeati,
		\newblock On the convergence of stochastic transport equations to a deterministic parabolic one,
		\newblock {\em Stochastics and Partial Differential Equations: Analysis and Computations} \textbf{8} (2020), no.~4, 833--868.
		
		\bibitem{GaleatiGrottoMaurelli2024}
		L.~Galeati, F.~Grotto and M.~Maurelli.
		\textit{Anomalous regularization in Kraichnan's passive scalar model}.
		\textit{Probability Theory and Related Fields}, 2026.
		
		\bibitem{Jakubowski1997}
		A.~Jakubowski,
		\newblock The almost sure {S}korokhod representation for subsequences in nonmetric spaces,
		\newblock {\em Theory of Probability and Its Applications} \textbf{42} (1997), no.~1, 167--174.
		
		\bibitem{JiaoLuo2024}
		S. Jiao and D. Luo,
		\newblock On the pathwise uniqueness of stochastic 2D Euler equations with Kraichnan noise and \(L^p\)-data,
		\newblock J. Math. Fluid Mech. \textbf{27}, 38 (2025).
		
		
		\bibitem{KaratzasShreve1991}
		I.~Karatzas and S.~E. Shreve,
		\newblock {\em Brownian Motion and Stochastic Calculus},
		\newblock 2nd ed., Springer, New York, 1991.
		
		\bibitem{Kraichnan1968}
		R.~H. Kraichnan,
		\newblock Small-scale structure of a scalar field convected by turbulence,
		\newblock {\em Physics of Fluids} \textbf{11} (1968), no.~5, 945--953.
		
		\bibitem{Kraichnan1994}
		R.~H. Kraichnan,
		\newblock Anomalous scaling of a randomly advected passive scalar,
		\newblock {\em Physical Review Letters} \textbf{72} (1994), no.~7, 1016--1019.
		
		\bibitem{LanthalerMishra2015}
		S.~Lanthaler and S.~Mishra,
		\newblock Computation of measure-valued solutions for the incompressible Euler equations,
		\newblock {\em Mathematical Models and Methods in Applied Sciences} \textbf{25} (2015), no.~11, 2043--2088.
		
		\bibitem{LanthalerMishra2019}
		S.~Lanthaler and S.~Mishra,
		\newblock On the convergence of the spectral viscosity method for the two-dimensional incompressible Euler equations with rough initial data,
		\newblock {\em Found. Comput. Math.} \textbf{20}(5), 1309--1362, 2020.
		
		\bibitem{LanthalerMishraParesPulido2021}
		S.~Lanthaler, S.~Mishra and C.~Par\'es-Pulido,
		\newblock Statistical solutions of the incompressible Euler equations,
		\newblock {\em Mathematical Models and Methods in Applied Sciences} \textbf{31} (2021), no.~2, 223--292.
		
		\bibitem{LopesFilhoNussenzveigLopesTadmor2001}
		M.~C. Lopes Filho, H.~J. Nussenzveig Lopes and E.~Tadmor,
		\newblock Approximate solution of the incompressible Euler equations with no concentrations,
		\newblock {\em Annales de l'Institut Henri Poincar\'e C, Analyse Non Lin\'eaire} \textbf{17} (2000), no.~3, 371--412.
		
		\bibitem{Jakubowski1997}
		A.~Jakubowski.
		\textit{The almost sure Skorokhod representation for subsequences in nonmetric spaces}.
		\textit{Theory of Probability and its Applications} \textbf{42} (1997), no.~1,
		167--174.
		
		
		\bibitem{OndrejatProhlWalkington2023}
		M.~Ondrej\'at, A.~Prohl and N.~J. Walkington,
		\newblock Numerical approximation of nonlinear {SPDE}s,
		\newblock {\em Stochastics and Partial Differential Equations: Analysis and Computations} \textbf{11} (2023), 1553--1634.
		
		\bibitem{Schochet1995}
		S.~Schochet,
		\newblock The weak vorticity formulation of the 2D Euler equations and concentration-cancellation,
		\newblock {\em Communications in Partial Differential Equations} \textbf{20} (1995), no.~5--6, 1077--1104.
		
		\bibitem{SteinShakarchi2003}
		E.~M. Stein and R.~Shakarchi,
		\newblock {\em Fourier Analysis: An Introduction},
		\newblock Princeton Lectures in Analysis, Vol.~1, Princeton University Press, Princeton, 2003.
		
		\bibitem{Wiedemann2011}
		E.~Wiedemann,
		\newblock Existence of weak solutions for the incompressible {E}uler equations,
		\newblock {\em Annales de l'Institut Henri Poincar\'e C, Analyse Non Lin\'eaire} \textbf{28} (2011), no.~5, 727--730.
		
		\bibitem{Yudovich1963}
		V.~I. Yudovich,
		\newblock Non-stationary flow of an ideal incompressible fluid,
		\newblock {\em Zhurnal Vychislitel'noi Matematiki i Matematicheskoi Fiziki} \textbf{3} (1963), 1032--1066.
		
	\end{thebibliography}
\end{document}